\numberwithin{equation}{section}
\theoremstyle{definition}
\newtheorem{Thm}[equation]{Theorem}
\newtheorem{Prop}[equation]{Proposition}
\newtheorem{Cor}[equation]{Corollary}
\newtheorem{Lem}[equation]{Lemma}
\def\imod#1{\allowbreak\mkern5mu{\operator@font mod}\,\,#1}
\begin{document}

\title[An Isomorphism]{An Isomorphism between Scalar-Valued Modular Forms and Modular Forms for Weil Representations}
\author[Yichao Zhang]{Yichao Zhang}
\address{Department of Mathematics, University of Connecticut, Storrs, CT 06269}
\email{yichao.zhang@uconn.edu}
\date{}
\subjclass[2010]{Primary: 11F41, 11F27}
\keywords{vector-valued, scalar-valued, modular form, Weil representation, weakly holomorphic, obstruction.}

\begin{abstract}
In this note, we consider discriminant forms that are given by the norm form of real quadratic fields and their induced Weil representations.
We prove that there exists an isomorphism between the space of vector-valued modular forms for the Weil representations that are invariant under the action of the automorphism group  and the space of scalar-valued modular forms that satisfy some $\epsilon$-condition, with which we translate Borcherds's theorem of obstructions to scalar-valued modular forms. In the end, we consider an example in the case of level $12$.
\end{abstract}

\maketitle

\section*{Introduction}

\noindent
Modular forms, or scalar-valued modular forms, have been studied extensively for over a century and become one of the central objects in number theory and other related fields. The spaces of scalar-valued modular forms behave well under multiplication, the action of Galois groups, and that of Hecke operators. On the other hand, vector-valued modular forms, which shall mean vector-valued modular forms associated to Weil representations throughout this note, were considered by Jacobi in their connection with theta functions of positive definite even lattices of even rank. They later naturally appeared when Borcherds \cite{borcherds1998automorphic} developed his theory of automorphic products, also known as the singular theta correspondence.  People have been trying to obtain satisfactory structure theory for spaces of vector-valued modular forms. For example, McGraw \cite{mcgraw2003rationality} considered the rationality and Bruinier and Stein \cite{bruinier2010weil} constructed Hecke operators for vector-valued modular forms. Nevertheless, the spaces of vector-valued modular forms do not possess as nice structures as that of scalar-valued modular forms.

It will be desirable then if one may go freely between these two types of modular forms, that is, if there is a one-to-one correspondence between them. In both directions, we have nice candidates: from the space of vector-valued modular forms to that of scalar-valued modular forms, we have $F\mapsto F_0$, while in the other direction there is also a canonical lift. Unfortunately, neither of them are injective or surjective. By imposing Hecke's $\epsilon$-condition on level $p$ forms with character $\left(\frac{\cdot}{p}\right)$ (see \cite{hecke1940analytische}) and applying an idea of Krieg  \cite{krieg1991maass}, Bruinier and Bundschuh \cite{bruinier2003borcherds} established a clean one-to-one correspondence in this case, which explains well why these modular forms with $\epsilon$-condition behave like modular forms on the full modular group. They then translated Borcherds's theory on obstructions and automorphic products to scalar-valued modular forms. More recently, by carefully investigating Weil representations, Scheithauer \cite{scheithauer2011some} proved that in the case of square-free level, all vector-valued modular forms that are invariant under the automorphisms of the discriminant form are lifts of some scalar-valued modular forms. In other words, he determined the image of the lift from scalar-valued modular forms to vector-valued modular forms in the case when the level is square-free. Note that the invariance condition in the prime level case \cite{bruinier2003borcherds} is hidden in their assumption on the signature of the lattice and the weight of the modular forms.

In the present note, by generalizing Bruinier and Bundschuh's idea in \cite{bruinier2003borcherds} and applying Scheithauer's formulas for the Weil representations in \cite{scheithauer2009weil}, we establish such a correspondence in the case when the discriminant form is given by the norm form of a real quadratic field. In particular, the level $N$ is a positive fundamental discriminant. This is Theorem \ref{Correspondence}, the main result of this note. We then translate the obstruction theorem of Borcherds (Theorem \ref{Obstruction-Scalar}),  and apply it to the case of $\mathbb Q(\sqrt{3})$, that is, the case of level $12$. In particular, using $\eta$-quotients, we construct the unique weakly holomorphic modular form $f_1$ of level $12$, weight $0$, with character $\left(\frac{12}{\cdot}\right)$ and the $\epsilon$-condition, whose Fourier expansion at $\infty$ begins with $q^{-1}$.

Here is the layout of this note. We recall scalar-valued modular forms and vector-valued modular form for Weil representations in Section 1 and 2 respectively. In Section 3, we establish the isomorphism and also prove some properties of the Fourier coefficients at different cusps. In Section 4, with the correspondence, we restate Borcherds's obstruction theorem for scalar-valued modular forms, and we also consider rationality of Fourier coefficients for scalar-valued modular forms at the end. In the last section, we apply the results in previous sections to give an example in the case of level $12$.

We remark that although in this note we are only interested in the case when the discriminant form is obtained from real quadratic fields, many statements should also hold in a more general setting.

\subsection*{Acknowledgments} The author would like to thank Professor Henry H. Kim and Professor Kyu-Hwan Lee for suggesting this problem and for useful communications. The author is also grateful to Professor Jan H. Bruinier for his correspondence with Professor Henry H. Kim regarding this problem. Finally, the author is very thankful to the anonymous referee for carefully reading a previous version of this note and making many valuable comments and suggestions.

\section{Scalar-Valued Modular Forms}\label{scalar}
\noindent
We consider scalar-valued modular forms in this section and recall some operators on spaces of scalar-valued modular forms.

For any positive integer $m$, we denote by $\omega(m)$ the number of distinct prime divisors of $m$. For any pair $m,N$ of integers, we denote by $(m,N)$ the greatest common divisor of $m$ and $N$, which should not be confused with the bilinear form we introduce below. If $N>0$, we denote $N_m$ to be the $m$-part of $N$; that is, $N_m$ is a positive divisor of $N$, contains only primes that divide $m$, and $(N/N_m,m)=1$.

Given a Dirichlet character $\chi$ modulo $N$, we denote $A(N,k,\chi)$ the space of weakly holomorphic modular functions of level $N$, weight $k$ and character $\chi$; namely, the space of functions $f$ that are holomorphic on the upper half plane, meromorphic at cusps, and
\[(f|_kM)(\tau)=\chi(d)f(\tau),\quad\text{for all } M=\begin{pmatrix}a&b\\c&d\end{pmatrix}\in\Gamma_0(N).\] Let $M(N,k,\chi)$ and $S(N,k,\chi)$ be the subspace of holomorphic forms and that of cuspforms respectively.

We are only interested in the case when $N>1$ is a fundamental discriminant and $\chi=\chi_D=\left(\frac N \cdot\right)$. It follows that $\chi_D$ is primitive of modulus $N$. Decompose it into $p$-components as $\chi_D=\prod_p\chi_p$. Then if $p$ is odd, then $\chi_p=\left(\frac \cdot p\right)$, and define $\varepsilon_p=1$ if $p\equiv 1\imod 4$ and $\varepsilon_p=i$ if $p\equiv 3\imod 4$. $\chi_2$ is determined and $\varepsilon_2$ is defined as follows:
\begin{itemize}
\item if $N_1\equiv 1\imod 4$, $\chi_2=1$ and $\varepsilon_2=1$.
\item if $N_1\equiv 3\imod 4$, $\chi_2=\left(\frac{-4}{\cdot}\right)$ and $\varepsilon_2=i$.
\item if $N_1\equiv 2\imod 8$, $\chi_2=\left(\frac{2}{\cdot}\right)$ and $\varepsilon_2=1$.
\item if $N_1\equiv 6\imod 8$, $\chi_2=\left(\frac{-2}{\cdot}\right)$ and $\varepsilon_2=i$.
\end{itemize}
Let $W(\chi)$ denote the Gauss sum of a Dirichlet character $\chi$, that is
\[W(\chi)=\sum_{a\imod N}\chi(a)e^{2\pi i a/N}.\] For any prime divisor $p$ of $N$, we have $W(\chi_p)=\varepsilon_pN_p^{\frac{1}{2}}$.

For each positive divisor $m$ of $N$, we shall denote $\chi_m=\prod_{p\mid m}\chi_p$ and $\chi_m'=\prod_{p\mid N,p\nmid m}\chi_p$.

For convenience, we denote the matrices
\[
S=\begin{pmatrix}
0&-1\\
1&0
\end{pmatrix},\quad T=\begin{pmatrix}
1&1\\
0&1
\end{pmatrix},\quad I=\begin{pmatrix}
1&0\\
0&1
\end{pmatrix},\quad W(N)=\begin{pmatrix}
0&-1\\
N&0
\end{pmatrix}.
\]
The weight-$k$ slash operator on a function $f$ on the upper half plane is defined as
\[(f|_kM)(\tau)=(\text{det}M)^{\frac{k}{2}}(c\tau+d)^{-k}f(M\tau), \text{ for } M=\begin{pmatrix}
a&b\\
c&d
\end{pmatrix}\in \text{GL}_2^+(\mathbb R).\] It follows that $W(N)$ gives an involution on $A(N,k,\chi_D)$. If $f\in A(N,k,\chi_D)$ and $m\mid N$, then the Hecke operator $U(m)$ is defined as
\[(f|_kU(m))(\tau)=m^{\frac{k}{2}-1}\sum_{j\imod m}f\left|_k\begin{pmatrix}
1&j\\
0&m
\end{pmatrix}\right.\]

For a positive divisor $m$ of $N$, choose $\gamma_m\in\text{SL}_2(\mathbb Z)$ such that
\[\gamma_m\equiv
\left\{
\begin{array}{cl}
S&\imod (N_m)^2\\
I &\imod (N/N_m)^2
\end{array}
\right.,
\] and define $\eta_m=\gamma_m\begin{pmatrix}
N_m&0\\
0&1
\end{pmatrix}$ and denote $\eta_m'=\eta_{N/N_m}$.

\begin{Lem}\label{Eta-operator} Let $f\in A(N,k,\chi_D)$ and $m,m_1,m_2$ be positive divisors of $N$.

(1) The action $f|_k\eta_m$ is independent of the choice of $\gamma_m$ and it defines an operator on $A(N,k,\chi_D)$.

(2) $f|_k\eta_N=f|_kW(N)$.

(3) If $(m_1,m_2)=1$, $f|_k\eta_{m_1m_2}=\chi_{m_2}(N_{m_1})f|_k\eta_{m_1}\eta_{m_2}$. In particular, $f|_k\eta_m\eta_m'=\chi_m'(N_m)f|_kW(N)$. Moreover, if $m=p_1p_2\cdots p_k$ is square-free, then \[f|_k\eta_m=\prod_{i<j}\chi_{p_j}(N_{p_i})f|_k\eta_{p_1}\eta_{p_2}\cdots\eta_{p_k}.\]

(4) $f|_k\eta^2_m=\chi_m(-1)\chi_m'(N_m)f$.

(5) If $(m_1,m_2)=1$, $f|_k\eta_{m_1}U(N_{m_2})=\chi_{m_1}(N_{m_2})f|_kU(N_{m_2})\eta_{m_1}$.
\end{Lem}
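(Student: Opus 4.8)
The common mechanism behind all five assertions is that $\eta_m=\gamma_m\left(\begin{smallmatrix}N_m&0\\0&1\end{smallmatrix}\right)$ is, up to normalization by the congruence conditions on $\gamma_m$, the Atkin--Lehner matrix $W_{N_m}$ for $\Gamma_0(N)$: it has determinant $N_m$, both diagonal entries divisible by $N_m$, and lower-left entry divisible by $N$. Because $\chi_D=\chi_m\chi_m'$ is a \emph{real} (quadratic) character, automorphy factors arising from $\Gamma_0(N)$ are tracked only modulo $\pm1$, and it is this that makes the identities exact. For (1)--(4) the plan is uniform: write the relevant product of $\eta$'s (and $W(N)$) as $M_0\,g$, where $g$ is the asserted answer ($\eta_m$, $W(N)$, $N_mI$, or $I$) and $M_0\in\text{GL}_2^+(\mathbb Q)$ has determinant $1$; check that $M_0$ is integral; check $N\mid(M_0)_{21}$ by reducing modulo $p^{v_p(N)}$ for each $p\mid N$, so that $M_0\in\text{SL}_2(\mathbb Z)\cap\Gamma_0(N)$; and compute $\chi_D\bigl((M_0)_{22}\bigr)$ by the Chinese Remainder Theorem, using $\chi_D=\chi_m\chi_m'$, keeping the two ``halves'' $N_m$ and $N/N_m$ of the level separated, and replacing $(M_0)_{22}^{-1}$ by $(M_0)_{22}$ since $\chi_m,\chi_m'$ are real. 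Then $f|_k(M_0g)=\chi_D\bigl((M_0)_{22}\bigr)\,f|_kg$ and the constant is read off. I expect the squares in $\gamma_m\equiv S\imod{N_m^2}$, $\gamma_m\equiv I\imod{(N/N_m)^2}$ to be used precisely here: they supply the divisibility (e.g. $N_{m_1}^2\mid(\gamma_{m_2})_{21}$ in (3), $N_m^2\mid(\gamma_m)_{22}$ in (4)) needed both for $M_0$ to be integral and for $M_0$ to actually land in $\Gamma_0(N)$.

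Concretely: in (1), independence of $\gamma_m$ is the case $g=\eta_m$, $M_0=\widetilde\gamma_m\gamma_m^{-1}\equiv I\imod{N^2}$ (by CRT), so $f|_kM_0=f$; membership $f|_k\eta_m\in A(N,k,\chi_D)$ is clear except for the automorphy law, for which one takes $M_0=\eta_mM\eta_m^{-1}$ with $M=\left(\begin{smallmatrix}a&b\\c&d\end{smallmatrix}\right)\in\Gamma_0(N)$ and finds $(M_0)_{22}\equiv a\imod{N_m}$, $\equiv d\imod{N/N_m}$, whence $\chi_D\bigl((M_0)_{22}\bigr)=\chi_m(a)\chi_m'(d)=\chi_m(d)\chi_m'(d)=\chi_D(d)$ (the middle step because $ad\equiv1\imod{N_m}$ and $\chi_m$ is real). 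In (2), with $\gamma_N=\left(\begin{smallmatrix}a&b\\c&d\end{smallmatrix}\right)\equiv S\imod{N^2}$ one gets $\eta_NW(N)^{-1}=\left(\begin{smallmatrix}-b&a\\-d&c\end{smallmatrix}\right)\in\Gamma_0(N)$ with lower-right entry $c\equiv1\imod{N^2}$, so $f|_k\eta_N=f|_kW(N)$. In (3), with $D_{m_1}=\left(\begin{smallmatrix}N_{m_1}&0\\0&1\end{smallmatrix}\right)$, one writes $M_0=\eta_{m_1}\eta_{m_2}\eta_{m_1m_2}^{-1}=\gamma_{m_1}\bigl(D_{m_1}\gamma_{m_2}D_{m_1}^{-1}\bigr)\gamma_{m_1m_2}^{-1}$, which is integral and, prime by prime, is $\equiv I$ at primes dividing $m_1$ or dividing $N$ but not $m_1m_2$, and $\equiv\operatorname{diag}(N_{m_1},N_{m_1}^{-1})$ at primes dividing $m_2$; hence $\chi_D\bigl((M_0)_{22}\bigr)=\prod_{p\mid m_2}\chi_p(N_{m_1})=\chi_{m_2}(N_{m_1})$. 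The first ``in particular'' is this with $m_2$ the product of the primes of $N$ not dividing $m$, together with (2); the squarefree formula follows by induction on the number of primes of $m$, using $N_{p_1\cdots p_{k-1}}=N_{p_1}\cdots N_{p_{k-1}}$. In (4), $M_0=N_m^{-1}\eta_m^2$ has determinant $1$, is integral, and prime by prime is $\equiv-I$ at primes dividing $m$ and $\equiv\operatorname{diag}(N_m,N_m^{-1})$ at the other primes of $N$, so $\chi_D\bigl((M_0)_{22}\bigr)=\chi_m(-1)\chi_m'(N_m)$; since scalar matrices act trivially under $|_k$, $f|_k\eta_m^2=f|_kM_0=\chi_m(-1)\chi_m'(N_m)\,f$.

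Part (5) is a coset computation. From $f|_k\eta_{m_1}U(N_{m_2})=N_{m_2}^{k/2-1}\sum_{j}f|_k\eta_{m_1}\left(\begin{smallmatrix}1&j\\0&N_{m_2}\end{smallmatrix}\right)$ and $\eta_{m_1}\left(\begin{smallmatrix}1&j\\0&N_{m_2}\end{smallmatrix}\right)\eta_{m_1}^{-1}=\gamma_{m_1}\left(\begin{smallmatrix}1&N_{m_1}j\\0&N_{m_2}\end{smallmatrix}\right)\gamma_{m_1}^{-1}$, one checks --- again by reduction modulo $p^{v_p(N)}$, using $(N_{m_1},N_{m_2})=1$ --- that $\gamma_{m_1}\left(\begin{smallmatrix}1&N_{m_1}j\\0&N_{m_2}\end{smallmatrix}\right)\gamma_{m_1}^{-1}=\mu_j\left(\begin{smallmatrix}1&\sigma(j)\\0&N_{m_2}\end{smallmatrix}\right)$, where $\sigma\colon j\mapsto N_{m_1}j\imod{N_{m_2}}$ is a permutation of $\mathbb Z/N_{m_2}$ and $\mu_j\in\Gamma_0(N)$ satisfies $(\mu_j)_{22}\equiv N_{m_2}^{-1}\imod{N_{m_1}}$ and $(\mu_j)_{22}\equiv1\imod{N/N_{m_1}}$, so that $\chi_D\bigl((\mu_j)_{22}\bigr)=\chi_{m_1}(N_{m_2})$ independently of $j$. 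Substituting $\eta_{m_1}\left(\begin{smallmatrix}1&j\\0&N_{m_2}\end{smallmatrix}\right)=\mu_j\left(\begin{smallmatrix}1&\sigma(j)\\0&N_{m_2}\end{smallmatrix}\right)\eta_{m_1}$ and reindexing the sum by $\sigma$ gives $f|_k\eta_{m_1}U(N_{m_2})=\chi_{m_1}(N_{m_2})\,f|_kU(N_{m_2})\eta_{m_1}$.

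The formal part --- manipulating $|_k$ and invoking $f|_kM_0=\chi_D\bigl((M_0)_{22}\bigr)f$ --- is immediate; the work is in the integrality checks and, above all, in the CRT bookkeeping of the Gauss-sum--type constants $\chi_{m_2}(N_{m_1})$, $\chi_m(-1)$, $\chi_m'(N_m)$, $\chi_{m_1}(N_{m_2})$. I expect the genuine obstacle to be the uniformity claim in (5): that $\chi_D\bigl((\mu_j)_{22}\bigr)$ is the same for all $N_{m_2}$ cosets, which is what permits pulling it out of the sum. This comes down to the fact that $N_{m_1}$ divides $N_{m_1}j\,\sigma(j)$ as soon as $v_p(N_{m_1})>0$, so the potentially $j$-dependent contribution vanishes modulo the relevant prime powers; verifying this, and the analogous prime-by-prime reductions in (3)--(4), is where the proof is most likely to be delicate.
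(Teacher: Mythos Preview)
Your proposal is correct and is in fact considerably more detailed than the paper's own treatment. The paper's proof consists of a single sentence: it cites Section~4.6 of Miyake's book for the prime case and asserts that ``the general case follows from the same verifications,'' skipping all details. What you have written is precisely those verifications carried out explicitly --- the same Atkin--Lehner mechanism, the same reduction to checking that certain matrices $M_0$ lie in $\Gamma_0(N)$ and then reading off $\chi_D\bigl((M_0)_{22}\bigr)$ via CRT. So there is no genuine difference in approach; you have supplied the argument the paper elides. Your observation that the squares in the congruences $\gamma_m\equiv S\imod{N_m^2}$, $\gamma_m\equiv I\imod{(N/N_m)^2}$ are exactly what forces integrality of the conjugated matrices (notably $D_{m_1}\gamma_{m_2}D_{m_1}^{-1}$ in (3)) is the key point, and it is correct.
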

\begin{proof}
In Section 4.6 of Miyake's book \cite{miyake2006modular}, the case when $m=p$ is a prime is treated. The general case follows from the same verifications and we skip the details.
\end{proof}

From now on, we shall drop the weight in the notations of the operators if no confusion is possible.

\section{Modular Forms for the Weil Representaions}\label{vector}

Let $N_1>1$ be a square-free integer. Let $F=\mathbb Q(\sqrt{N_1})$ and $\mathcal O_F$ be its ring of integers. Let $N$ be the discriminant of $F/\mathbb Q$. It is well-known that if $N_1\equiv 2,3\imod 4$, $N=4N_1$ and $\mathcal O_F=\mathbb Z[\sqrt{N_1}]$, and if $N_1\equiv 1\imod 4$, $N=N_1$ and $\mathcal O_F=\mathbb Z\left[\frac{\sqrt{N_1}+1}{2}\right]$. Let N and Tr denote the norm and trace for $F/\mathbb Q$ respectively.
If $\mathfrak d$ is the different of $F/\mathbb Q$, we know that
\[\mathfrak d^{-1}=\{x\in F: \text{Tr}(x\mathcal O_F)\subset \mathbb Z\}=
\left\{
\begin{array}{cl}
\frac{1}{2}\mathbb Z+\frac{\sqrt{N_1}}{2N_1}\mathbb Z, & \quad N_1\equiv 2,3\imod 4\\
\left(\frac{1}{\sqrt{N_1}}\right), &\quad N_1\equiv 1\imod 4
\end{array}
\right.
\]
Define the following lattice $L=\mathbb Z^2\oplus \mathcal O_F$ with the quadratic form
\[q(a,b,\gamma)=\text{N}(\gamma)-ab,\quad a,b\in\mathbb Z, \gamma\in\mathcal O_F.\]
The corresponding bilinear form is given by
\[((a_1,b_1,\gamma_1),(a_2,b_2,\gamma_2))=\text{Tr}(\gamma_1\overline{\gamma_2})-a_1b_2-a_2b_1.\] We see that $L$ is an even lattice of signature $(2,2)$. Its dual lattice is $L'=\mathbb Z^2\oplus \mathfrak d^{-1}$, hence the discriminant form $D=L'/L\cong \mathfrak d^{-1}/\mathcal{O}_F$. The level of $D$ is $N$. Denote $q\imod 1$ on $D$ also by $q$.

Let $k$ be an even integer. Let $\rho_D$ be the Weil representation of $SL_2(\mathbb Z)$ on $\mathbb C[D]$; that is, if $\{e_\gamma:\gamma\in D\}$ is the standard basis for the group algebra $\mathbb C[D]$, then the action
\begin{align*}
\rho_D(T)e_\gamma &= e(q(\gamma))e_\gamma,\\
\rho_D(S)e_\gamma &=\frac{1}{\sqrt{N}}\sum_{\delta\in D}e(-(\gamma,\delta))e_\delta,
\end{align*}
defines the unitary representation $\rho_D$ of $\text{SL}_2(\mathbb Z)$ on $\mathbb C[D]$. Here $e(x)=e^{2\pi i x}$ and $T,S$ are the standard generators of $\text{SL}_2(\mathbb Z)$(see the next section).

Let $\mathcal A(k,\rho_D)$ be the space of modular forms of weight $k$ and type $\rho_D$. That is, $F=\sum_\gamma F_\gamma e_\gamma\in\mathcal A(k,\rho_D)$ if $F|_kM:=\sum_\gamma (F_\gamma|_kM)e_\gamma=\rho_D(M)F$ for any $M\in\text{SL}_2(\mathbb Z)$, $F_\gamma$ is holomorphic on the upper half plane and $F_\gamma=\sum_{n\in q(\gamma)+\mathbb Z}a(\gamma,n)q^n$ with at most finitely many negative power terms. Let $\mathcal M(k,\rho_D)$ and $\mathcal S(k,\rho_D)$ denote the space of holomorphic forms and the space of cusp forms respectively.
We shall also need $\mathcal A^\text{inv}(k,\rho_D)$, the subspace of modular forms that are invariant under $\text{Aut}(D)$. Analogously we have $\mathcal M^{\text{inv}}(k,\rho_D)$ and $\mathcal S^{\text{inv}}(k,\rho_D)$.

We finish this section by investigating the discriminant forms $D$ considered above. Here we follow the notations in \cite{scheithauer2009weil}.

\begin{Lem}\label{Discriminant-Form}
Denote $q=\prod_{p\mid N}q_p$ be a Jordan decomposition of the discriminant form $q$ on $D$. Then if $p\mid N$ and $p$ is odd, then $q_p\cong p^{\pm 1}$ with $\pm 1=\left(\frac{-2N_1/p}{p}\right)$. For $q_2$,

(1) If $N_1\equiv 1\imod 4$, $q_2$ is trivial.

(2) If $N_1\equiv 3\imod 4$, $q_2\cong 2_2^{+ 2}$.

(3) If $N_1\equiv 2\imod 4$, $q_2\cong 2_1^{+ 1}\oplus 4_t^{\pm 1}$. Here $t=-N_1/2$ and $\pm 1=\left(\frac{2}{t}\right)$.
\end{Lem}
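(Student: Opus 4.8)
The plan is to compute the discriminant form $(D,q)$ locally at each ramified prime, directly from the explicit description of $\mathfrak d^{-1}$ recalled above, and then to identify the resulting finite quadratic forms with Scheithauer's Jordan symbols. Recall that $D\cong\mathfrak d^{-1}/\mathcal O_F$ with $q(\gamma)=\mathrm N(\gamma)\bmod 1$ and pairing $(\gamma_1,\gamma_2)=\mathrm{Tr}(\gamma_1\overline{\gamma_2})$. An unramified prime contributes nothing to $D$, so $(D,q)$ is the orthogonal direct sum of its $p$-primary parts $(D_p,q_p)$ over $p\mid N$; moreover anything in $\mathbb Q\subset F$ is orthogonal to anything in $\mathbb Q\sqrt{N_1}$ since $\mathrm{Tr}(\sqrt{N_1})=0$, so in the case $N_1\equiv 2,3\imod 4$ the two summands $\tfrac12\mathbb Z$ and $\tfrac{\sqrt{N_1}}{2N_1}\mathbb Z$ of $\mathfrak d^{-1}$ are already orthogonal. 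It therefore suffices to exhibit $(D,q)$ as an orthogonal sum of cyclic pieces with generators visible in $\mathfrak d^{-1}$, to evaluate $\mathrm N$ on each generator, and to read off the $p$-parts.

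I would handle the odd primes uniformly. For odd $p\mid N$ one has $v_p(N)=1$, so $D_p$ is cyclic of order $p$; a generator is $g=\tfrac{\sqrt{N_1}}{p}$, which lies in $\mathfrak d^{-1}$ in every case (it equals $(N_1/p)\cdot\tfrac1{\sqrt{N_1}}$ and also $(2N_1/p)\cdot\tfrac{\sqrt{N_1}}{2N_1}$), and $q(g)=\mathrm N(g)=-\tfrac{N_1}{p^2}=\tfrac{-N_1/p}{p}$. Thus $q_p$ is $\mathbb Z/p$ with quadratic form $x\mapsto\tfrac{-N_1/p}{p}x^2$. In Scheithauer's normalization the sign attached to such a rank-one component is the Legendre symbol of \emph{twice} the coefficient of $q$ (equivalently, the sign of the determinant of the bilinear-form Gram matrix of the Jordan block), so $q_p\cong p^{\pm 1}$ with $\pm 1=\left(\tfrac{-2N_1/p}{p}\right)$, as claimed.

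For $p=2$ I would split according to $N_1\bmod 4$. First, if $N_1\equiv 1\imod 4$ then $2\nmid N$ and $q_2$ is trivial. Second, if $N_1\equiv 3\imod 4$ then $\mathfrak d^{-1}=\tfrac12\mathbb Z+\tfrac{\sqrt{N_1}}{2N_1}\mathbb Z$, and the $2$-Sylow of $D$ is spanned by the orthogonal pair $\tfrac12$ and $\tfrac{\sqrt{N_1}}{2}$ (both of order $2$), with $q(\tfrac12)=\tfrac14$ and $q(\tfrac{\sqrt{N_1}}{2})=-\tfrac{N_1}{4}\equiv\tfrac14$ since $N_1\equiv 3\imod 4$; hence $q_2$ is $\mathbb Z/2\times\mathbb Z/2$ with nonzero values $\tfrac14,\tfrac14,\tfrac12$, namely $q_2\cong 2_2^{+2}$. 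Third, if $N_1\equiv 2\imod 4$ then again $\mathfrak d^{-1}=\tfrac12\mathbb Z+\tfrac{\sqrt{N_1}}{2N_1}\mathbb Z$, but now the $2$-Sylow is spanned by the orthogonal pair $\tfrac12$ (order $2$) and $\tfrac{\sqrt{N_1}}{4}$ (order $4$), with $q(\tfrac12)=\tfrac14$ and $q(\tfrac{\sqrt{N_1}}{4})=-\tfrac{N_1}{16}=\tfrac{-N_1/2}{8}$; matching against the scale-$2$ and scale-$4$ rank-one $2$-adic symbols gives $q_2\cong 2_1^{+1}\oplus 4_t^{\pm 1}$ with $t=-N_1/2$ and $\pm 1=\left(\tfrac2t\right)$.

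I expect the only genuinely delicate point to be the symbol bookkeeping in these matching steps. For odd $p$ one must not drop the factor $2$ relating the coefficient of $q$ to the determinant that carries the sign; this is precisely the $\left(\tfrac2p\right)$ that upgrades $\left(\tfrac{-N_1/p}{p}\right)$ to the correct $\left(\tfrac{-2N_1/p}{p}\right)$. For $p=2$ one must remember that $2$-adic Jordan symbols are not invariants of the finite quadratic form by themselves (there are the usual sign- and oddity-fusion relations), so the identifications with $2_2^{+2}$ and $2_1^{+1}\oplus 4_t^{\pm 1}$ are best carried out at the level of the finite quadratic form as above, with the oddity subscripts checked by hand. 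Everything else is routine; alternatively one could compute the local differents ($\mathfrak d_p=(\sqrt{N_1})$ for odd $p\mid N$, $\mathfrak d_2=(2)$ when $N_1\equiv 3\imod 4$, and $\mathfrak d_2=(2\sqrt{N_1})$ when $N_1\equiv 2\imod 4$) and appeal to the standard structure theory of discriminant forms, but working from the explicit $\mathfrak d^{-1}$ above is more direct.
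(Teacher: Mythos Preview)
Your proposal is correct and follows essentially the same approach as the paper: both compute the discriminant form by choosing the same explicit generators in $\mathfrak d^{-1}$ (namely $\frac{\sqrt{N_1}}{p}$ for odd $p$, and $\frac12,\frac{\sqrt{N_1}}{2}$ or $\frac12,\frac{\sqrt{N_1}}{4}$ at $2$), evaluate their norms, and read off the Jordan symbols. Your additional remarks on the orthogonality of the rational and $\sqrt{N_1}$-parts and on the factor-of-$2$ bookkeeping for the Legendre symbol make explicit what the paper leaves implicit, but the argument is the same.
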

\begin{proof}
Assume $N_1\equiv 1\imod 4$; clearly $q_2$ is trivial. We know that $L'/L\cong \mathbb Z/N_1\mathbb Z$ with a generator $\frac{1}{\sqrt{N_1}}$, hence a generator for the p-Jordan component can be chosen as $\frac{N_1/p}{\sqrt{N_1}}=\frac{\sqrt{N_1}}{p}$. We have
$q_p\left(\frac{\sqrt{N_1}}{p}\right)=\frac{-N_1/p}{p}$, from which it follows that $q_p\cong p^{\pm 1}$ with $\pm 1=\left(\frac{-2N_1/p}{p}\right)$.

Now assume $N_1\equiv 3\imod 4$. In this case, $L'/L\cong \mathbb Z/2\mathbb Z\times \mathbb Z/2\mathbb Z\times \mathbb Z/N_1\mathbb Z$, with generators $\gamma_2=\frac{1}{2}$, $\gamma_2'=\frac{\sqrt{N_1}}{2}$ and $\gamma_{N_1}=\frac{\sqrt{N_1}}{N_1}$ for each component respectively. From this, it follows that $q_2\cong 2_2^{+2}$. For any prime $p\mid N_1$, we may choose $\frac{\sqrt{N_1}}{p}$ as a generator for the p-Jordan component and then we see that $q_p\cong p^{\pm 1}$ with $\pm 1=\left(\frac{-2N_1/p}{p}\right)$.

Finally assume $N_1\equiv 2\imod 4$. In this case, $L'/L\cong \mathbb Z/2\mathbb Z\times \mathbb Z/4\mathbb Z\times \mathbb Z/(N_1/2)\mathbb Z$, with generators $\gamma_2=\frac{1}{2}$, $\gamma_2'=\frac{\sqrt{N_1}}{4}$ and $\gamma_{N_1/2}=\frac{2\sqrt{N_1}}{N_1}$ for each component respectively. Since $q(\gamma_2')=-\frac{N_1}{16}=-\frac{N_1/2}{8}$, we see that $q_2\cong 2_1^{+1}\oplus 4_{-N_1/2}^{\pm 1}$, where $\pm 1=\left(\frac{2}{-N_1/2}\right)$. For any odd prime $p\mid N_1$, we may choose $\frac{\sqrt{N_1}}{p}$ as a generator for the p-Jordan component and then we see that $q_p\cong p^{\pm 1}$ with $\pm 1=\left(\frac{-2N_1/p}{p}\right)$.
\end{proof}

Let $D=\bigoplus_{p\mid N}D_p$ be a Jordan decomposition as in the proof of Lemma \ref{Discriminant-Form}. In the case of $N_1\equiv 2,3\imod 4$, we keep the generators $\gamma_2,\gamma_2'$ therein.

\begin{Lem} The group $\text{Aut}(D)$ is an elementary abelian 2-group of order $2^{\omega(N)}$. More explicitly,

(1) If $p\mid N$ is odd, then $\text{Aut}(D_p)=\langle \sigma_p\rangle$, with $\sigma_p(\gamma)=-\gamma$, $\gamma\in D_p$.

(2) If $N_1\equiv 3\imod 4$, then $\text{Aut}(D_2)=\langle \sigma_2\rangle$, with  $\sigma_2(\gamma_2)=\gamma_2'$.

(3) If $N_1\equiv 2\imod 4$, then $\text{Aut}(D_2)=\langle \sigma_2\rangle$, with $\sigma_2(\gamma)=-\gamma$, $\gamma\in D_2$.
\end{Lem}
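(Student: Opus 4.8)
The plan is to reduce the computation to the $p$-primary parts $D_p$ and then determine each $\text{Aut}(D_p)$ by hand, using the explicit Jordan components supplied by Lemma \ref{Discriminant-Form}. First I would record the reduction: for distinct primes $p,p'\mid N$ the pairing between $D_p$ and $D_{p'}$ vanishes, since $b(x,y)$ has order dividing $\gcd(\mathrm{ord}\,x,\mathrm{ord}\,y)=1$, so $q$ is the orthogonal sum of its restrictions $q_p$ and $q\big(\sum_p x_p\big)=\sum_p q_p(x_p)$. As each $D_p$ is the $p$-Sylow subgroup of $D$, it is characteristic, so every $\phi\in\text{Aut}(D)$ restricts to an element of $\text{Aut}(D_p)$; conversely any family $(\phi_p)_{p\mid N}$ with $\phi_p\in\text{Aut}(D_p)$ assembles to an element of $\text{Aut}(D)$. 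Hence $\text{Aut}(D)=\prod_{p\mid N}\text{Aut}(D_p)$, and it suffices to show that each $\text{Aut}(D_p)$ is cyclic of order $2$ generated by the stated $\sigma_p$; since there is exactly one such factor for each of the $\omega(N)$ primes dividing $N$ (in all three residue cases for $N_1$), the order and the elementary abelian $2$-group structure follow at once.

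For odd $p\mid N$, Lemma \ref{Discriminant-Form} gives $D_p\cong p^{\pm1}$, cyclic of order $p$ with a generator $g$ and $q(g)=a/p$ for some $a$ prime to $p$. A group automorphism of $D_p$ is multiplication by a unit $c\in(\mathbb Z/p\mathbb Z)^\times$, and it preserves $q$ iff $c^2a/p\equiv a/p\imod 1$, i.e. $c^2\equiv 1\imod p$; as $p$ is odd this forces $c=\pm1$, so $\text{Aut}(D_p)=\{\pm\mathrm{id}\}=\langle\sigma_p\rangle$.

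For $p=2$ I would treat the three cases of Lemma \ref{Discriminant-Form} separately. If $N_1\equiv1\imod4$ there is no $2$-part. If $N_1\equiv3\imod4$, then $D_2=\langle\gamma_2\rangle\oplus\langle\gamma_2'\rangle\cong(\mathbb Z/2\mathbb Z)^2$, and a direct computation from $q(a,b,\gamma)=\mathrm{N}(\gamma)-ab$ gives $q(\gamma_2)=q(\gamma_2')=\tfrac14$ and $q(\gamma_2+\gamma_2')=\tfrac12$ in $\mathbb Q/\mathbb Z$; an automorphism permutes the three nonzero elements and preserves $q$, hence must fix $\gamma_2+\gamma_2'$ and either fix or interchange $\gamma_2$ and $\gamma_2'$, and both maps are isometries, giving $\text{Aut}(D_2)=\langle\sigma_2\rangle$ with $\sigma_2(\gamma_2)=\gamma_2'$. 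If $N_1\equiv2\imod4$, then $D_2=\langle\gamma_2\rangle\oplus\langle\gamma_2'\rangle\cong\mathbb Z/2\mathbb Z\oplus\mathbb Z/4\mathbb Z$ with $q(\gamma_2)=\tfrac14$, $q(\gamma_2')=t/8$ where $t=-N_1/2$ is odd, and vanishing mixed pairing; comparing the $q$-values of the four elements of order $4$ shows every isometry sends $\gamma_2'$ to $\pm\gamma_2'$, and comparing the $q$-values of the three elements of order $2$ (which are pairwise distinct in $\mathbb Q/\mathbb Z$) then forces $\gamma_2\mapsto\gamma_2$, so $\text{Aut}(D_2)=\{\pm\mathrm{id}\}=\langle\sigma_2\rangle$. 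Collecting the local statements proves the lemma.

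The only step that needs genuine care is the case $N_1\equiv2\imod4$: there the underlying abelian group $\mathbb Z/2\mathbb Z\oplus\mathbb Z/4\mathbb Z$ has an automorphism group of order $8$, so one must actually rule out the six maps that fail to be isometries of $q_2$, and organizing the argument by the orders of the elements being permuted is what makes this transparent. Everything else is a routine bookkeeping computation with the norm form.
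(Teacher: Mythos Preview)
Your proof is correct and is precisely the kind of explicit computation the paper alludes to; the paper itself omits all details, so your argument supplies exactly what is missing. The reduction to the $p$-primary factors, the unit computation for odd $p$, and the case-by-case analysis of the $2$-part (including the careful count for $\mathbb Z/2\mathbb Z\oplus\mathbb Z/4\mathbb Z$) are all sound.
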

\begin{proof}
This can be seen by explicit computations. We omit the details.
\end{proof}

\begin{Prop}\label{Invariance}
If $\beta,\gamma\in D$ with $q(\beta)=q(\gamma)$, then there exists $\sigma\in\text{Aut}(D)$ such that $\sigma\beta=\gamma$.
\end{Prop}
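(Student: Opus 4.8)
The plan is to prove the statement componentwise, using the Jordan decomposition $D=\bigoplus_{p\mid N}D_p$ and the fact that $q(\beta)=q(\gamma)$ means $q_p(\beta_p)=q_p(\gamma_p)$ in $\mathbb{Q}_p/\mathbb{Z}_p$ for every $p\mid N$. Since $\mathrm{Aut}(D)=\prod_p\mathrm{Aut}(D_p)$ by the previous lemma, it suffices to show: for each $p\mid N$ and each pair $\beta_p,\gamma_p\in D_p$ with $q_p(\beta_p)=q_p(\gamma_p)$, there is $\sigma_p\in\mathrm{Aut}(D_p)=\langle\sigma_p\rangle$ with $\sigma_p\beta_p=\gamma_p$. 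Because each $\mathrm{Aut}(D_p)$ has order $2$, this amounts to checking that the two orbits of $\sigma_p$ on $D_p$ — namely $\{0\}$-type fixed points and the two-element orbits $\{x,\sigma_p x\}$ — are exactly the level sets of $q_p$; equivalently, that $q_p$ separates the $\sigma_p$-orbits.

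**The main work** is then a short case analysis over the possible $D_p$ from Lemma \ref{Discriminant-Form}. For odd $p$, $D_p\cong\mathbb{Z}/p\mathbb{Z}$ with generator $g$, $q_p(xg)=cx^2$ for some $c$ with $(c,p)=1$, and $\sigma_p(xg)=-xg$; two elements $xg,yg$ have the same norm iff $x^2\equiv y^2\pmod p$ iff $y\equiv\pm x\pmod p$, which is exactly the $\sigma_p$-orbit condition. For $p=2$: if $N_1\equiv1\bmod4$ there is nothing to do; if $N_1\equiv3\bmod4$, $D_2\cong 2_2^{+2}$ has order $4$ with generators $\gamma_2,\gamma_2'$ of norm $-\tfrac14\bmod1=\tfrac34$, while $0$ has norm $0$ and $\gamma_2+\gamma_2'$ has norm $q(\gamma_2)+q(\gamma_2')+(\gamma_2,\gamma_2')$ — one computes $(\gamma_2,\gamma_2')=\mathrm{Tr}(\tfrac{\sqrt{N_1}}{4})=0$, so $q(\gamma_2+\gamma_2')=\tfrac32\equiv\tfrac12\bmod1$, hence the four elements have norms $0,\tfrac34,\tfrac34,\tfrac12$, and $\sigma_2$ swaps exactly the two norm-$\tfrac34$ elements $\gamma_2\leftrightarrow\gamma_2'$ while fixing $0$ and $\gamma_2+\gamma_2'$ — again matching the level sets. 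If $N_1\equiv2\bmod4$, $D_2\cong 2_1^{+1}\oplus 4_t^{\pm1}$ has order $8$ with $\sigma_2=-\mathrm{id}$; here I would simply tabulate the norms of all $8$ elements and observe that $x$ and $-x$ always share a norm and that distinct $\pm$-orbits have distinct norms (the fixed points $0$ and the $2$-torsion elements get their own norm values, and the three pairs $\{x,-x\}$ of order-$4$ elements split by norm).

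**The step I expect to be the genuine obstacle** is the $N_1\equiv2\bmod4$ case at $p=2$: unlike the odd and the $2_2^{+2}$ cases where the orbit/norm correspondence is essentially immediate, here one must verify that the $4$-element quotient $D_2/\{\pm1\}$ is genuinely separated by $q_2$, i.e. that no two of the three nontrivial $\pm$-orbits of order-$4$ elements collide in norm and that they also avoid the norm values of the $2$-torsion. This is a finite check but it is where a hidden coincidence could in principle break the statement, so I would carry it out explicitly using the chosen generators $\gamma_2=\tfrac12$, $\gamma_2'=\tfrac{\sqrt{N_1}}{4}$, $\gamma_{N_1/2}=\tfrac{2\sqrt{N_1}}{N_1}$, and the values $q(\gamma_2)=-\tfrac14$, $q(\gamma_2')=-\tfrac{N_1}{16}$. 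Once all local cases are dispatched, assembling $\sigma=\prod_p\sigma_p^{\epsilon_p}$ with $\epsilon_p$ chosen per component finishes the proof, and I would note that this also reproves, more conceptually, that $\mathrm{Aut}(D)$ acts transitively on each norm-level set — a fact that will be used when passing between vector-valued forms and their invariant parts.
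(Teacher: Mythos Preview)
Your approach is essentially identical to the paper's: reduce to Jordan components via the coprime-order decomposition, then do a finite case check on each $D_p$, using $\sigma_p=-\mathrm{id}$ for odd $p$ and for $N_1\equiv 2\pmod 4$, and the swap $\gamma_2\leftrightarrow\gamma_2'$ for $N_1\equiv 3\pmod 4$. One small arithmetic slip: for $N_1\equiv 3\pmod 4$ you have $q(\gamma_2)=N(\tfrac12)=\tfrac14$ and $q(\gamma_2')=N(\tfrac{\sqrt{N_1}}{2})=-\tfrac{N_1}{4}\equiv\tfrac14\pmod 1$, so the common norm is $\tfrac14$, not $\tfrac34$; this does not affect the argument since all you need is that the two values coincide and differ from $0$ and $\tfrac12$.
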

\begin{proof}
Assume $D=D_1\oplus D_2$ with $(|D_1|, |D_2|)=1$. Let $\gamma=\gamma_1+\gamma_2$ and $\beta=\beta_1+\beta_2$ with $\gamma_i,\beta_i\in D_i$ for $i=1,2$. Assume that $q(\gamma)=q(\beta)$. We see easily that $q(\gamma_i)=q(\beta_i)$ for each $i=1,2$. So to finish the proof, we just need to consider each Jordan component.

Let $p\mid N$ be any odd prime and $\gamma,\beta$ belong to the p-Jordan component which is isomorphic to $\mathbb Z/p\mathbb Z$. Since $q(\gamma)=q(\beta)$, we must have $\gamma=\pm \beta$. So either identity or $\sigma_p$ can do the job.

To consider the 2-Jordan components, we keep the notations in the proof of Lemma \ref{Discriminant-Form}. We first assume that $N_1\equiv 3\imod 4$. In this case, we have either $\gamma=\beta$ or $\{\gamma,\beta\}=\{\gamma_2,\gamma_2'\}$. So either identity or $\sigma_2$ can send $\gamma$ to $\beta$.

Similarly, if $N_1\equiv 2\imod 4$, we see that either $\gamma=\beta$ or $\{\gamma,\beta\}=\{\gamma_2',3\gamma_2'\}$ or $\{\gamma,\beta\}=\{\gamma_2+\gamma_2',\gamma_2+3\gamma_2'\}$. In the latter two cases, $\sigma_2$ sends $\gamma$ to $\beta$.
\end{proof}

 Define $\epsilon=(\epsilon_p),\epsilon^*=(\epsilon_p^*)\in\{\pm 1\}^{\omega(N)}$ such that $\epsilon_p^*=1$ for each prime $p\mid N$, and
$\epsilon_p=\chi_p(-1)$ if $p$ is odd,
$\epsilon_2=-1$ if $N_1\equiv 3\imod 4$, and
$\epsilon_2=\chi_{N_1/2}(-1)$ if $N_1\equiv 2\imod 4$. We shall need these data in building the correspondence. Here $\epsilon$ is not to be confused with the $\varepsilon$ defined in Section 1.

\section{The Isomorphism Theorem}
\noindent
In this section, we prove an isomorphism between some spaces of vector-valued modular forms and scalar-valued modular forms.

Fix a modular form $F\in\mathcal A^\text{inv}(k,\rho_D)$. Define $W$ the span of $F_\gamma$, $\gamma\in D$, and $W'$ the span of $F_0|M$, $M\in SL_2(\mathbb Z)$. Let $W_0$ be the subspace of $T$-invariant functions in $W$.

\begin{Lem}\label{Term-Inclusion}
Let $S\subset D$. If $\sum_{\gamma\in S}F_\gamma\in W'$, then $F_\gamma\in W'$ for any $\gamma\in S$.
\end{Lem}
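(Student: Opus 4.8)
The plan is to exploit the fact that $F$ is invariant under $\mathrm{Aut}(D)$, so that its components depend only on the $q$-value (or at least, many of them coincide), combined with the transformation behavior of $F$ under $\rho_D$, which relates the components $F_\gamma$ to the single function $F_0$ via the entries of the representation matrices $\rho_D(M)$. The point of $W'$ being the span of the $\mathrm{SL}_2(\mathbb{Z})$-translates of $F_0$ is that it is a module closed under the slash action, and I want to show that each individual $F_\gamma$ with $\gamma\in S$ already lies in it once the sum does. The natural tool to separate the terms $F_\gamma$ out of the sum $\sum_{\gamma\in S}F_\gamma$ is the operator $T$: since $(F_\gamma|T)(\tau)=e(q(\gamma))F_\gamma(\tau)$, applying powers of $T$ to the sum $\sum_{\gamma\in S}F_\gamma$ and taking suitable linear combinations picks out the partial sums over those $\gamma\in S$ sharing a common value of $q(\gamma)\imod 1$. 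Because $D$ is finite, only finitely many values of $q(\gamma)$ occur, so this is a finite Vandermonde-type argument; and $W'$ is $T$-stable, so each such partial sum again lies in $W'$.

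So, first I would reduce to the case where all $\gamma\in S$ have the same value $q(\gamma)=n_0\in\mathbb{Q}/\mathbb{Z}$: grouping $S=\bigsqcup_j S_j$ by the value of $q$, the Vandermonde argument with $T$ shows $\sum_{\gamma\in S_j}F_\gamma\in W'$ for each $j$, so it suffices to treat one group. Second, for a fixed $q$-value, Proposition \ref{Invariance} tells us that $\mathrm{Aut}(D)$ acts transitively on $\{\gamma\in D: q(\gamma)=n_0\}$, and since $F$ is $\mathrm{Aut}(D)$-invariant, all the components $F_\gamma$ with $q(\gamma)=n_0$ are \emph{equal} to one common function, call it $G$. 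Hence $\sum_{\gamma\in S_j}F_\gamma=|S_j|\,G$, so $G\in W'$ (as $|S_j|\neq 0$). Third, I need to know that the full set $\{\gamma: q(\gamma)=n_0\}$ is not larger in a way that matters — but it does not matter: every $F_\gamma$ with $q(\gamma)=n_0$ equals this same $G\in W'$, so in particular $F_\gamma\in W'$ for every $\gamma\in S_j$, hence for every $\gamma\in S$. This completes the argument.

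The one genuine subtlety — the step I'd expect to be the main obstacle to state cleanly — is the very first reduction by $T$: one must be careful that the functions $\{F_\gamma\}$ appearing with a fixed $q$-value are handled consistently when several distinct $q$-values are present, i.e.\ that the linear algebra extracting $\sum_{\gamma\in S_j}F_\gamma$ from $\sum_{\gamma\in S}F_\gamma$ really does stay inside $W'$. This is fine because $W'$ is by definition closed under $|M$ for all $M\in\mathrm{SL}_2(\mathbb{Z})$, in particular under $|T^m$ for all $m\in\mathbb{Z}$, and the extraction is a $\mathbb{C}$-linear combination of the functions $(\sum_{\gamma\in S}F_\gamma)|T^m$; since $W'$ is a $\mathbb{C}$-vector space, the result stays in $W'$. (If one prefers to avoid even listing finitely many $q$-values explicitly, one can instead note that $\sum_{\gamma\in S}F_\gamma$ is already a finite sum of functions each of which is an eigenfunction of $|T$, with the distinct eigenvalues $e(q(\gamma))$ grouping the terms, and eigenspace projections for a finite-order-free family of eigenvalues are polynomials in $|T$.) Everything after that is immediate from Proposition \ref{Invariance} and the $\mathrm{Aut}(D)$-invariance of $F$.
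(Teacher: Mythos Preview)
Your argument is correct and is essentially the paper's own proof: group the sum by $q$-value, use the $T$-action together with a Vandermonde argument (and the $\mathrm{SL}_2(\mathbb{Z})$-invariance of $W'$) to isolate each group inside $W'$, and then invoke Proposition~\ref{Invariance} plus the $\mathrm{Aut}(D)$-invariance of $F$ to conclude that the individual components in each group are all equal and hence lie in $W'$. The only cosmetic difference is the order in which you invoke the two ingredients.
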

\begin{proof}
We may write $\sum_{\gamma\in S}F_\gamma=\sum_{n\imod \mathbb Z}F_n$, with $F_n=\sum_{\gamma\in S, q(\gamma)=n}F_\gamma$. Since $F$ is invariant under $\text{Aut}(D)$, by Proposition \ref{Invariance}, terms in the sum of $F_n$ are all equal. Therefore, we only have to prove that $F_n\in W'$.

Now the transformation rule of $F$ under $T$ shows that $F_n|T=e(n)F_n$; here $e(x)=e^{2\pi ix}$. Since $W'$ is invariant under the action of $\text{SL}_2(\mathbb Z)$ and $\sum_{n}F_n\in W'$, we have $\sum_ne(nj)F_n\in W'$ for each positive integer $j$. Since $e(n)$'s are distinct mutually, this implies that $F_n\in W'$ by the theory of Vandermonde matrix.
\end{proof}

\begin{Lem}\label{T-invariance}
$W_0=\text{span}_\mathbb{C}\{F_0\}$. Actually, if $f=\sum_{\gamma\in D}a_\gamma F_\gamma\in W_0$, then $f=a_0F_0$.
\end{Lem}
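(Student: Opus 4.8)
The plan is to show that a $T$-invariant element $f = \sum_{\gamma \in D} a_\gamma F_\gamma$ of $W$ must be a scalar multiple of $F_0$, and that this scalar is precisely $a_0$. First I would write out what $T$-invariance means coefficientwise: applying the transformation rule $F|_k T = \rho_D(T) F$, we have $F_\gamma|_k T = e(q(\gamma)) F_\gamma$ for each $\gamma$, so
\[
f|_k T = \sum_{\gamma \in D} a_\gamma e(q(\gamma)) F_\gamma.
\]
Grouping the sum according to the value $n = q(\gamma) \imod 1$, write $f = \sum_{n} f_n$ with $f_n = \sum_{q(\gamma) = n} a_\gamma F_\gamma$; then $f_n|_k T = e(n) f_n$. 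The condition $f|_k T = f$ forces $\sum_n (e(n) - 1) f_n = 0$. Here I would invoke the key structural input from Lemma \ref{Term-Inclusion}'s proof (and Proposition \ref{Invariance}): since $F$ is $\text{Aut}(D)$-invariant, for $\gamma, \beta$ with $q(\gamma) = q(\beta)$ we have $F_\gamma = F_\beta$. This means the $F_\gamma$ with a common value of $q$ all coincide, so $f_n$ is a scalar multiple of the common function $F_{\gamma_n}$ (any $\gamma_n$ with $q(\gamma_n) = n$), and the various $f_n$ for distinct $n$ are, a priori, different functions — but I cannot yet assert linear independence across different $n$.

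The next step is to extract information using the full $\text{SL}_2(\mathbb{Z})$-action rather than just $T$. Since $W'$ (the span of the $SL_2(\mathbb{Z})$-translates of $F_0$) is $SL_2(\mathbb{Z})$-stable and contains $F_0 = \sum_{\gamma} (\text{coeff}) e_\gamma$'s zeroth component, the Vandermonde argument already used in Lemma \ref{Term-Inclusion} shows each $F_n := \sum_{q(\gamma)=n} F_\gamma$ lies in $W'$, and in particular $F_0$ itself (the $n=0$ piece, which includes $\gamma = 0$) decomposes. I would then use the concrete shape of $\rho_D(S)$: the zeroth component of $\rho_D(S) F$ is $\frac{1}{\sqrt N} \sum_\delta F_\delta$, so $F_0|_k S = \frac{1}{\sqrt N}\sum_{\delta \in D} F_\delta$. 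Apply the same to the $T$-invariant $f$: the only way $f|_k S$ can again be expressed consistently with $f$ being built from the $F_\gamma$ is to pin down the $a_\gamma$. Concretely, because $f$ is $T$-invariant and $f_0$ contains $a_0 F_0$ together with $\sum_{q(\gamma)=0, \gamma \neq 0} a_\gamma F_\gamma = \left(\sum_{q(\gamma)=0,\gamma\neq 0} a_\gamma\right) F_{\gamma_0}$, I must show $f = a_0 F_0$, i.e. that all the "extra" contributions vanish.

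The cleanest route, and the one I would actually carry out, is this: consider $g = f - a_0 F_0 \in W_0$; then the zeroth coordinate-component issue is moot, and I claim $g = 0$. Write $g = \sum_\gamma b_\gamma F_\gamma$ with $b_0 = 0$ (after absorbing the $q(\gamma)=0, \gamma\ne 0$ terms we may assume the coefficient of $F_0$ among the $q=0$ group is zero too, using $F_\gamma = F_0$ for all such $\gamma$ — wait, that is only true if those $F_\gamma$ equal $F_0$; by $\text{Aut}(D)$-invariance and Proposition \ref{Invariance} they equal $F_{\gamma'}$ for any $\gamma'$ of the same norm, and $0$ is the unique norm-zero element up to automorphism only if... this requires care). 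The honest statement is: $F_\gamma$ depends only on $q(\gamma)$, and for $q(\gamma) = 0$ with $\gamma \neq 0$, $F_\gamma = F_0$ iff $0$ and $\gamma$ are $\text{Aut}(D)$-equivalent, which by Proposition \ref{Invariance} holds automatically since $q(\gamma) = q(0) = 0$. Hence $f_0 = (a_0 + \sum_{q(\gamma)=0,\gamma\neq 0} a_\gamma) F_0$. So $f = cF_0 + \sum_{n \neq 0} f_n$ for a scalar $c$; it remains to show $f_n = 0$ for $n \neq 0$ and $c = a_0$. For the former: $f - cF_0$ is $T$-invariant with vanishing $q=0$ part, apply $S$, use that $(f-cF_0)|_kS \in W$ again and match $T$-eigencomponents; the function $(f-cF_0)|_kS$ has a nonzero $q=0$ eigencomponent only if $\sum_{n\neq 0}$ (something) $\neq 0$, and forcing $T$-invariance of $f$ cascades to kill everything.

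**The main obstacle.** The hard part is the final linear-independence / non-degeneracy step: showing that once we have a $T$-invariant combination, no nontrivial "higher" pieces $f_n$ ($n \neq 0$) survive. The naive worry is that the $F_\gamma$ might satisfy linear relations making $W_0$ larger than $\mathbb{C}F_0$. I expect this is where one must genuinely use that $F_0|_k S = \frac{1}{\sqrt N}\sum_\delta F_\delta$ spans (together with further translates) a space in which the $T$-weight-$0$ subspace is one-dimensional — essentially a statement that the permutation action of $SL_2(\mathbb Z)$ on $\mathbb C[D]$ via $\rho_D$, restricted to the span of the orbit of $e_0$, has a one-dimensional $T$-fixed space spanned by $\sum_{q(\gamma)=0} e_\gamma$; then transporting this through $F \mapsto (F_\gamma)$ and using that $F$ is not identically zero gives the result. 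Handling the degenerate case $F = 0$ (where $W_0 = 0$ and the statement still holds vacuously with $a_0 = 0$) is a trivial but necessary aside.
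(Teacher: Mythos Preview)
You are missing the single fact that does all the work here: the discriminant form $D$ has \emph{no nonzero isotropic elements}. That is, $q(\gamma)=0$ in $\mathbb Q/\mathbb Z$ forces $\gamma=0$. (For instance, when $N_1\equiv 1\imod 4$ one has $D\cong\mathbb Z/N_1\mathbb Z$ generated by $1/\sqrt{N_1}$, and $q(a/\sqrt{N_1})=-a^2/N_1\in\mathbb Z$ iff $N_1\mid a^2$ iff $N_1\mid a$, since $N_1$ is squarefree; the other cases are checked the same way via Lemma~\ref{Discriminant-Form}.) Your entire discussion of the case ``$q(\gamma)=0$ with $\gamma\neq 0$'' is therefore vacuous, and your attempt to use Proposition~\ref{Invariance} to identify $F_\gamma$ with $F_0$ for such $\gamma$, and then to squeeze out $c=a_0$ via the $S$-action, is working hard on an empty set.

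Once you know $0$ is the only isotropic element, the proof is one line. Each $F_\gamma$ has Fourier expansion supported on $q(\gamma)+\mathbb Z$, so in the decomposition $f=\sum_n f_n$ by fractional part of exponent, the piece $f_n$ is exactly $\sum_{q(\gamma)=n}a_\gamma F_\gamma$. These pieces have disjoint Fourier supports (mod $1$), hence are linearly independent as functions; $T$-invariance of $f$ forces $f_n=0$ for $n\neq 0$, and $f_0=a_0F_0$ since $\gamma=0$ is the unique element with $q(\gamma)=0$. There is no need for $\text{Aut}(D)$-invariance, the $S$-action, or any Vandermonde argument, and in particular your identified ``main obstacle'' (nontrivial $f_n$ surviving for $n\neq 0$) dissolves immediately once you observe the Fourier supports are disjoint.
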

\begin{proof}
The lemma follows easily from the fact that $D$ has no nonzero isotropic elements.
\end{proof}

\begin{Prop}
$W=W'$.
\end{Prop}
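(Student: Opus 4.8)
\emph{Proof proposal.}
The plan is to establish the two inclusions $W'\subseteq W$ and $W\subseteq W'$ separately; the first is purely formal, and the second is where the $\mathrm{Aut}(D)$-invariance of $F$ enters, via Lemma \ref{Term-Inclusion}.

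For $W'\subseteq W$: writing $F=\sum_{\gamma\in D}F_\gamma e_\gamma$ and expanding the transformation law $F|_kM=\rho_D(M)F$ coordinate by coordinate, one reads off for every $M\in\mathrm{SL}_2(\mathbb Z)$ the identity $F_0|_kM=\sum_{\gamma\in D}\rho_D(M)_{0\gamma}F_\gamma$, where $\rho_D(M)_{\delta\gamma}$ denotes the matrix entries of $\rho_D(M)$ in the basis $\{e_\gamma\}$. Thus each generator $F_0|_kM$ of $W'$ is a $\mathbb C$-linear combination of the $F_\gamma$, so it lies in $W$, and hence $W'\subseteq W$.

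For $W\subseteq W'$: it suffices to show $F_\gamma\in W'$ for every $\gamma\in D$. Apply the identity above with $M=S$. By the definition of $\rho_D$ we have $\rho_D(S)e_\gamma=\tfrac{1}{\sqrt N}\sum_{\delta\in D}e(-(\gamma,\delta))e_\delta$, so $\rho_D(S)_{0\gamma}=\tfrac{1}{\sqrt N}e(-(\gamma,0))=\tfrac{1}{\sqrt N}$ for every $\gamma\in D$, and therefore $F_0|_kS=\tfrac{1}{\sqrt N}\sum_{\gamma\in D}F_\gamma$. In particular $\sum_{\gamma\in D}F_\gamma\in W'$. Now invoke Lemma \ref{Term-Inclusion} with the subset $S=D$: it yields $F_\gamma\in W'$ for each $\gamma\in D$, whence $W\subseteq W'$.

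Combining the two inclusions gives $W=W'$. There is no real obstacle left once Lemma \ref{Term-Inclusion} is in hand; that lemma already absorbs the substantive work, namely the use of the $\mathrm{Aut}(D)$-invariance of $F$ together with Proposition \ref{Invariance} to identify equal terms, and the Vandermonde argument to separate the $T$-eigenspaces. The present proposition is then essentially the observation that the $e_0$-component of $\rho_D(S)$ is the all-ones vector (up to the scalar $1/\sqrt N$), which exhibits $\sum_{\gamma}F_\gamma$ inside $W'$.
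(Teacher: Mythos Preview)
Your proof is correct and follows the same overall strategy as the paper: exhibit $\sum_{\gamma\in D}F_\gamma$ as an element of $W'$, then invoke Lemma~\ref{Term-Inclusion} to conclude $F_\gamma\in W'$ for every $\gamma$. The only difference is the choice of matrix used to produce that sum. The paper appeals to Scheithauer's general formula for $F_0|M$ and picks $M$ with $c=1$, $d=N$ (treating the cases $N_1\equiv 3\pmod 4$ and $N_1\equiv 2\pmod 4$ separately, after citing \cite{scheithauer2011some} for the square-free case), whereas you simply take $M=S$ and read off $F_0|_kS=\tfrac{1}{\sqrt N}\sum_{\gamma}F_\gamma$ directly from the defining formula for $\rho_D(S)$. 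Your route is slightly more economical: it avoids the reference to Scheithauer's formula and handles all three residue classes of $N_1$ uniformly in one line. Indeed, the paper itself uses exactly this identity $F_0|S=\tfrac{1}{\sqrt N}\sum_\gamma F_\gamma$ later (in the proof of Theorem~\ref{Correspondence}), so your shortcut is entirely in keeping with the methods already present.
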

\begin{proof}
This is done in \cite{scheithauer2011some} in the case of square-free $N$. We now consider the case $N_1\equiv 3\imod 4$ and $N=4N_1$. One direction is trivial.

For $M\in\text{SL}_2(\mathbb Z)$, we shall need the concrete formula for $F_0|M$ in \cite{scheithauer2011some}:
\[F_0|M=(*) \sum_{\beta\in D^{c*}}e(d\beta_c^2/2)F_\beta,\quad M=\begin{pmatrix}a&b\\c&d\end{pmatrix}.\]
Here $(*)$ stands for a nonzero constant and we refer to \cite{scheithauer2011some} for the meaning of other notations.
Let $m_1$ be a positive divisor of $N_1$.  We fix $\gamma_p$ a generator for the p-adic Jordan component if $p>2$ and $\gamma_2$ and $\gamma_2'$ a set of generators for the 2-adic Jordan component. Therefore, $D_p=\langle \gamma_p\rangle$ if $p>2$ and $D_2=\langle \gamma_2\rangle \times \langle \gamma_2'\rangle$.

We choose $M\in\text{SL}_2(\mathbb Z)$ such that $c=1$, $d=N$. In this case, $D_c=0$, $D^c=D$, $x_c=0$ and $D^{c*}=D^c$. It follows that $d\beta_c^2/2=0$ and $\sum_{\beta\in D}F_\beta\in W'$. By Lemma \ref{Term-Inclusion}, we have $F_\gamma\in W'$ for each $\gamma\in D$, hence $W=W'$.

Similar argument can be applied to the case $N_1\equiv 2\imod 4$. We skip the details.
\end{proof}

Define a map $\phi: \mathcal A^\text{inv}(k,\rho_D)\rightarrow A(N,k,\chi_D)$ by
\[F\mapsto 2^{-\omega(N)}N^{-\frac{k-1}{2}}F_0|{W(N)},\]
where $\omega(N)$ is the numbers distinct prime divisors.
Define another map $\psi: A(N,k,\chi_D)\rightarrow \mathcal A^\text{inv}(k,\rho_D)$ by
\[f\mapsto N^{\frac{k-1}{2}}\sum_{M\in\Gamma_0(N)\backslash SL_2(\mathbb Z)}\left(f|W(N)|M\right)\rho_D(M^{-1})e_0.\]

\begin{Lem} Both $\phi$ and $\psi$ are well-defined.
\end{Lem}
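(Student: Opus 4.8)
The plan is to verify the two defining requirements for each map separately: that the output lies in the correct space (the right transformation law, holomorphy on $\mathbb{H}$, and meromorphy at the cusps), and — crucially for $\phi$ — that the output is scalar-valued, i.e.\ genuinely a function rather than a vector. For $\phi$, start with $F \in \mathcal{A}^{\mathrm{inv}}(k,\rho_D)$ and form $g = 2^{-\omega(N)}N^{-(k-1)/2}\,F_0|W(N)$. Holomorphy on $\mathbb{H}$ and the finiteness of principal parts at the cusps are immediate from the corresponding properties of the components $F_\gamma$ (each is holomorphic with a $q$-expansion having only finitely many negative terms), since $W(N)$ and the slash action only permute cusps and rescale. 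The transformation law under $\Gamma_0(N)$ is the main point: for $M = \begin{pmatrix} a & b \\ c & d\end{pmatrix} \in \Gamma_0(N)$ one must show $g|_k M = \chi_D(d)\, g$. Writing $g|_k M = 2^{-\omega(N)}N^{-(k-1)/2}\, F_0 |_k (W(N) M)$ and using $W(N) M W(N)^{-1} = \begin{pmatrix} d & -c/N \\ -bN & a\end{pmatrix} \in \mathrm{SL}_2(\mathbb{Z})$, one reduces to computing the action of $\rho_D$ on this conjugated matrix applied to $e_0$, and extracting the $e_0$-coefficient. I would decompose $\rho_D$ into its $p$-components and use Scheithauer's explicit formulas from \cite{scheithauer2009weil} together with the structure of $D$ from Lemma \ref{Discriminant-Form}; the Gauss sums $W(\chi_p) = \varepsilon_p N_p^{1/2}$ recorded in Section 1 are exactly what makes the scalar $\chi_D(d)$ fall out, and the normalization constant $2^{-\omega(N)}N^{-(k-1)/2}$ is calibrated precisely for this.

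For $\psi$, start with $f \in A(N,k,\chi_D)$ and form $\Psi = N^{(k-1)/2}\sum_{M \in \Gamma_0(N)\backslash \mathrm{SL}_2(\mathbb{Z})}(f|W(N)|M)\,\rho_D(M^{-1})e_0$. First I would check this is independent of the choice of coset representatives: replacing $M$ by $M_0 M$ with $M_0 \in \Gamma_0(N)$, the factor $f|W(N)|M_0 M = \chi_D(d_0)^{-1}\cdot(\text{cocycle})$ picks up a scalar from $W(N) M_0 W(N)^{-1} \in \Gamma_0(N^2)$ acting on $f$, which must cancel against $\rho_D((M_0 M)^{-1})e_0 = \rho_D(M^{-1})\rho_D(M_0^{-1})e_0$; since $e_0$ is fixed by $\rho_D$ up to the relevant scalar on $\Gamma_0(N)$ (again a Gauss-sum computation on $\rho_D$), the summand is well-defined. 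The transformation law $\Psi|_k N = \rho_D(N)\Psi$ for all $N \in \mathrm{SL}_2(\mathbb{Z})$ then follows formally: $\Psi|_k N = N^{(k-1)/2}\sum_M (f|W(N)|MN)\rho_D(M^{-1})e_0$, and reindexing $M \mapsto MN^{-1}$ converts the sum into $\rho_D(N)\Psi$. Holomorphy on $\mathbb{H}$ is clear; meromorphy of each component $\Psi_\gamma$ at $\infty$ follows because each $f|W(N)|M$ is a weakly holomorphic form on some $\Gamma(N')$, hence has a Fourier expansion in fractional powers of $q$ with finitely many negative terms, and only finitely many cosets contribute to each $\Psi_\gamma$. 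Finally, invariance under $\mathrm{Aut}(D)$: since $\sigma \in \mathrm{Aut}(D)$ commutes with $\rho_D(M^{-1})$ and fixes $e_0$, we get $\sigma\Psi = \Psi$, so indeed $\Psi \in \mathcal{A}^{\mathrm{inv}}(k,\rho_D)$.

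The main obstacle I anticipate is the Gauss-sum bookkeeping in the $\phi$ direction: one needs the precise value of the $e_0$-component of $\rho_D(\widetilde{M})e_0$ for $\widetilde{M}$ a lift of the image of $\Gamma_0(N)$ under conjugation by $W(N)$, and showing this equals $\chi_D(d)$ after dividing by the normalization. This is where the $p$-adic splitting $\rho_D = \bigotimes_p \rho_{D_p}$, Scheithauer's formulas, and the case analysis of Lemma \ref{Discriminant-Form} (especially the $2$-adic components $2_2^{+2}$ and $2_1^{+1}\oplus 4_t^{\pm1}$) all have to be combined carefully, and the $\varepsilon_p$ versus $\chi_p$ distinction from Section 1 must be tracked so that the product of local Gauss sums reassembles into the global character $\chi_D$. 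The well-definedness of $\psi$ hinges on essentially the same local computation, so once the $\phi$ computation is in hand the $\psi$ part should follow with little extra work.
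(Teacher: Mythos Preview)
Your plan is correct in substance and ultimately relies on the same ingredient as the paper (Scheithauer's explicit description of $\rho_D$ restricted to $\Gamma_0(N)$), but you are taking an unnecessary detour. The paper's proof is essentially one line: since $W(N)$ is already an involution on $A(N,k,\chi_D)$, one may drop it from both definitions; then for $\phi$ the statement $F_0\in A(N,k,\chi_D)$ is exactly Proposition~4.5 of \cite{scheithauer2009weil} applied at $\gamma=0$, and the verification for $\psi$ is declared routine.

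By contrast, you carry $W(N)$ through the whole argument and conjugate, which forces you to analyze $W(N)MW(N)^{-1}$ rather than $M$ itself. This works, but it is extra bookkeeping with no payoff. Two small slips confirm this: (i) your claim that $W(N)M_0W(N)^{-1}\in\Gamma_0(N^2)$ is wrong---the lower-left entry is $-bN$, so it only lands in $\Gamma_0(N)$; and (ii) the normalization $2^{-\omega(N)}N^{-(k-1)/2}$ is a nonzero scalar and plays no role whatsoever in verifying the transformation law, so saying it is ``calibrated precisely for this'' is misleading. Finally, for $\phi$ you need more than the $e_0$-coefficient of $\rho_D(M')e_0$: you need $\rho_D(M')e_0$ to be a \emph{scalar multiple} of $e_0$ (otherwise $F_0|M'$ picks up contributions from other $F_\gamma$). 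That is precisely the content of Scheithauer's result, so once you invoke it the Gauss-sum case analysis you anticipate as the ``main obstacle'' is already done for you.
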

\begin{proof}
To consider either one, we may drop the operator $W(N)$. It is easy to verify that $\psi$ is well-defined and that for $\phi$ follows from Proposition 4.5 in \cite{scheithauer2009weil} applied to $\gamma=0$.
\end{proof}

\begin{Prop}\label{Injection}
We have $\psi\circ\phi=id$. In particular, $\phi$ is injective.
\end{Prop}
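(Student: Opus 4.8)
The plan is to rewrite the composite $\psi\circ\phi$ as a single $\mathbb C[D]$-valued operator applied to $F$ and then to identify that operator on $\mathcal A^{\mathrm{inv}}(k,\rho_D)$ with $2^{\omega(N)}\cdot\mathrm{id}$, using the explicit shape of $\rho_D$. Since $k$ is even, $W(N)^2=-N\cdot I$ acts trivially in the weight-$k$ slash, so $W(N)$ is an involution on $A(N,k,\chi_D)$, and from the definition of $\phi$ one has $\phi(F)|W(N)=2^{-\omega(N)}N^{-\frac{k-1}{2}}F_0$. Substituting this into the definition of $\psi$ and cancelling the powers of $N$ leaves
\[
\psi(\phi(F))=2^{-\omega(N)}\sum_{M\in\Gamma_0(N)\backslash SL_2(\mathbb Z)}(F_0|M)\,\rho_D(M^{-1})e_0 ,
\]
so it is enough to prove that the sum on the right equals $2^{\omega(N)}F$. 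Now $F|M=\rho_D(M)F$ gives $F_0|M=\sum_\delta\rho_D(M)_{0\delta}F_\delta$, and unitarity of $\rho_D$ gives $\rho_D(M^{-1})e_0=\sum_\delta\overline{\rho_D(M)_{0\delta}}\,e_\delta$; hence the $\gamma$-component of the sum is $\sum_\delta P_{\gamma\delta}F_\delta$, where $P_{\gamma\delta}=\sum_{M\in\Gamma_0(N)\backslash SL_2(\mathbb Z)}\rho_D(M)_{0\delta}\,\overline{\rho_D(M)_{0\gamma}}$. Each summand is independent of the coset representative, since for $\gamma_0\in\Gamma_0(N)$ one has $\rho_D(\gamma_0)e_0=\chi(\gamma_0)e_0$ with $|\chi(\gamma_0)|=1$ (Proposition 4.5 of \cite{scheithauer2009weil} with $\gamma=0$, the fact already used to see $\phi$ is well defined), and the same fact makes the matrix $P=(P_{\gamma\delta})$ Hermitian and positive semidefinite. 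So the goal is $\sum_\delta P_{\gamma\delta}F_\delta=2^{\omega(N)}F_\gamma$ for every $F\in\mathcal A^{\mathrm{inv}}(k,\rho_D)$.

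Next I would read off the structure of $P$. A right translation $M\mapsto M\gamma_1$ permutes the cosets $\Gamma_0(N)\backslash SL_2(\mathbb Z)$ and sends $\rho_D(M^{-1})e_0$ to $\rho_D(\gamma_1^{-1})\rho_D(M^{-1})e_0$, which shows $P$ commutes with $\rho_D(SL_2(\mathbb Z))$. Moreover every $\sigma\in\mathrm{Aut}(D)$ fixes $e_0$ and commutes with $\rho_D$, so each vector $\rho_D(M^{-1})e_0$ is $\mathrm{Aut}(D)$-invariant; hence $P$ kills the orthogonal complement of $\mathbb C[D]^{\mathrm{Aut}(D)}$ and preserves $\mathbb C[D]^{\mathrm{Aut}(D)}$. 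By Proposition \ref{Invariance} this space is $\operatorname{span}_{\mathbb C}\{v_n:\,n\in q(D)\}$ with $v_n=\sum_{q(\gamma)=n}e_\gamma$, and since the $v_n$ are eigenvectors of $\rho_D(T)$ while $P$ commutes with $\rho_D(T)$, we get $Pv_n=\lambda_n v_n$ for scalars $\lambda_n\ge 0$. Using Proposition \ref{Invariance} once more, any $F\in\mathcal A^{\mathrm{inv}}(k,\rho_D)$ has $F_\gamma$ depending only on $q(\gamma)$, i.e. $F=\sum_n f_n v_n$; hence the identity to be proved reduces to the single assertion $\lambda_n=2^{\omega(N)}$ for all $n\in q(D)$.

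The evaluation of $\lambda_n$ is the main obstacle. One has $\lambda_n\cdot\#\{\gamma:q(\gamma)=n\}=\sum_{M\in\Gamma_0(N)\backslash SL_2(\mathbb Z)}|(\rho_D(M)v_n)_0|^2$, and I would insert Scheithauer's closed formula for the $e_0$-row of $\rho_D(M)$, in the form valid for all $M\in SL_2(\mathbb Z)$ (the formula for $F_0|M$ recalled earlier in this section, cf. \cite{scheithauer2009weil}): it expresses $(\rho_D(M)v_n)_0$ as a normalized Gauss-type sum of modulus-$|D^{c*}|^{-1/2}$ times $\sum_{\beta\in D^{c*},\,q(\beta)=n}e(d\beta_c^2/2)$ over a subgroup $D^{c*}\subseteq D$ determined by the lower-left entry $c$ of $M$. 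Running $M$ over a standard system of representatives of $\Gamma_0(N)\backslash SL_2(\mathbb Z)$ (indexed by the admissible pairs $(c,d)$) and using that $N$ is a fundamental discriminant, so that $D=\bigoplus_{p\mid N}D_p$ and both the matrix coefficient and the coset sum factor over the primes $p\mid N$, the contribution of each $p\mid N$ is a single factor $2=|\mathrm{Aut}(D_p)|$ while all the other local sums collapse to $1$; hence $\lambda_n=2^{\omega(N)}$. This is the vector-valued refinement of the Gauss-sum computation in \cite{bruinier2003borcherds}, with Scheithauer's multiplicative formulas supplying the local input, and it is the only genuinely technical step. Granting it, $\psi(\phi(F))=2^{-\omega(N)}PF=F$ for all $F\in\mathcal A^{\mathrm{inv}}(k,\rho_D)$, and the injectivity of $\phi$ follows at once, since $\phi(F)=\phi(F')$ forces $F=\psi(\phi(F))=\psi(\phi(F'))=F'$.
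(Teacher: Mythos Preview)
Your setup is clean and correct up through the diagonalization $Pv_n=\lambda_n v_n$: the operator $P=\sum_M w_Mw_M^*$ with $w_M=\rho_D(M^{-1})e_0$ is well-defined on cosets, Hermitian, commutes with $\rho_D(SL_2(\mathbb Z))$, and has image in $\mathbb C[D]^{\mathrm{Aut}(D)}$; together with Proposition~\ref{Invariance} this does reduce the claim to $\lambda_n=2^{\omega(N)}$ for every $n\in q(D)$.

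The gap is the last paragraph. You assert that the sum $\sum_M|(\rho_D(M)v_n)_0|^2$ factors over $p\mid N$ and that each local factor equals $2$, but you do not actually carry this out, and the one-line justification (``all the other local sums collapse to $1$'') is not a proof. This \emph{is} the content of the proposition, so leaving it at a sketch is not acceptable. Moreover you are doing more work than necessary: you do not need to compute $\lambda_n$ for every $n$. Since $P$ commutes with $\rho_D(S)$ and $\rho_D(S)v_0=\rho_D(S)e_0=N^{-1/2}\sum_{\gamma\in D}e_\gamma=N^{-1/2}\sum_n v_n$ has nonzero component along every $v_n$, applying $P$ to both sides of $\rho_D(S)v_0=N^{-1/2}\sum_n v_n$ gives $\lambda_0\sum_n v_n=\sum_n\lambda_n v_n$, so $\lambda_n=\lambda_0$ for all $n$. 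Then $\lambda_0=P_{00}=\sum_M|(\rho_D(M)e_0,e_0)|^2$, and grouping this sum by cusps and evaluating with Scheithauer's formula is exactly the computation the paper performs: each of the $2^{\omega(N)}$ ``good'' cusps contributes $1$ and (when $N_1\equiv 2,3\pmod 4$) the remaining cusps contribute $0$ because $0\notin D^{c*}$.

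For comparison, the paper takes a shorter route: it only shows the $e_0$-component of $\psi(\phi(F))$ equals $F_0$, by grouping the coset sum by cusps, observing each cusp-piece is $T$-invariant and hence (by Lemma~\ref{T-invariance}) a multiple of $F_0$, and evaluating that multiple via Scheithauer's formula for $(\rho_D(M)e_0,e_0)$. That the $e_0$-component determines the whole form is implicit in the earlier Proposition $W=W'$ (applied to $G=\psi(\phi(F))-F$: if $G_0=0$ then all $G_\gamma=0$). Your intertwiner argument replaces this appeal to $W=W'$ by the equality of eigenvalues $\lambda_n=\lambda_0$; either way, the honest work is the cusp-by-cusp evaluation of $P_{00}$, which you still have to do.
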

\begin{proof}
When $N_1\equiv 1\imod 4$, this is proved in Theorem 5.4 in \cite{scheithauer2011some}. Here we prove the case $N_1\equiv 3\imod 4$ and omit the case $N_1\equiv 2\imod 4$, since the latter is similar.

We need to prove that if $F\in\mathcal A^\text{inv}(k,\rho_D)$, then
\[\sum_{M\in\Gamma_0(N)\backslash\text{SL}_2(\mathbb Z)} (F_0|M)(\rho_D(M^{-1})e_0,e_0)=2^{\omega(N)}F_0.\]
The inequivalent cusps are represented by $\frac{1}{m_1}$, $\frac{1}{2m_1}$, and $\frac{1}{4m_1}$, where $m_1$ runs over the set of all positive divisors of $N_1$.
For each cusp $s$, consider
\[F_s=\sum_{M\in\Gamma_0(N)\backslash\text{SL}_2(\mathbb Z), M\infty=s}F_0|M(\rho_D(M^{-1})e_0,e_0),\]
so we have to prove $\sum_{s}F_s=2^{\omega(N)}F_0$.

Assume $s\sim \frac{1}{m_1}$ and choose $M$ with $c=m_1$. Then since $F_s$ is $T$-invariant, that is $F_s(\tau+1)=F_s$, by Lemma \ref{T-invariance} above and Theorem 4.7 in \cite{scheithauer2009weil},
\begin{align*}
F_s &= \sum_{j\imod N/m_1}(F_0|MT^j)(\rho_D(MT^n)^{-1}e_0,e_0)\\
&= \sum_{j\imod N/m_1} \sum_{\alpha\in D} F_\alpha (\rho_D(MT^j)e_\alpha,e_0)(\rho_D(MT^j)^{-1}e_0,e_0)\\
&= \frac{N}{m_1}(\rho_D(M)e_0,e_0)(\rho_D(M)^{-1}e_0,e_0)F_0=F_0.
\end{align*}

If $s\sim \frac{1}{4m_1}$, by similar computations, we have $F_s=F_0$. But if $s\sim \frac{1}{2m_1}$, we have $F_s=0$ since in this case $0\not\in D^{c*}$ with $c=2m_1$.
There are in total $2^{\omega(N)}$ cusps not of the form $\frac{1}{2m_1}$, and the statement follows.
\end{proof}

Now we try to find the image of $\phi$ and establish a one-to-one correspondence between spaces of vector-valued modular forms and scalar-valued modular forms.

Define the subspace $A^\delta(N,k,\chi_D)$ for each $\delta=(\delta_p)_{p\mid N}\in\{\pm 1\}^{\omega(N)}$ of $A(N,k,\chi_D)$ as follows:
\[A^\delta(N,k,\chi_D)=\left\{f=\sum_n a(n)q^n\in A(N,k,\chi_D)\left|
 \begin{split}a(n)=0 \text{ if } \chi_p(n)=-\delta_p \text{ for some }p\mid N\end{split}\right.\right\}.\]
Denote $M^\delta(N,k,\chi_D)=M(N,k,\chi_D)\cap A^\delta(N,k,\chi_D)$ and similarly we have $S^\delta(N,k,\chi_D)$.
We call such a condition we impose on the Fourier coefficients the \emph{$\delta$-condition}. Hence $\epsilon$ and $\epsilon^*$ are two special sign vectors.

\begin{Lem}
$\phi(\mathcal A^\text{inv}(k,\rho_D))\subset A^\epsilon(N,k,\chi_D)$.
\end{Lem}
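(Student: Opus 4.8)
The plan is to trace through the definition of $\phi$ and identify, for each prime $p\mid N$, which Fourier coefficients of $\phi(F)=2^{-\omega(N)}N^{-(k-1)/2}F_0|W(N)$ are forced to vanish. Since $F_0|W(N)$ is (up to the nonzero constant) just $F_0|\eta_N$, and by Lemma \ref{Eta-operator}(3) we may factor $\eta_N$ as a product of the $\eta_p$ over $p\mid N$ (with harmless scalar corrections from the $\chi$'s), it suffices to understand how each $\eta_p$ interacts with the Fourier expansion of $F_0$. Concretely, the Fourier expansion of $F_0$ at the cusp corresponding to $\eta_p$ is governed by $\rho_D$ evaluated at a matrix congruent to $S$ mod $N_p^2$ and to $I$ mod $(N/N_p)^2$; by Scheithauer's formulas (Theorem 4.7 in \cite{scheithauer2009weil}, already invoked in the proof of Proposition \ref{Injection}), the image $F_0|\eta_p$ is, up to a nonzero constant, a linear combination $\sum_{\beta\in D_p^{*}}(\cdots)F_\beta$ supported only on those $\beta$ in a single Jordan component, with Fourier exponents in $q(\beta)+\mathbb Z$.

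The key step is then the following local computation: for the odd prime $p\mid N$, the component $D_p\cong p^{\pm1}$ has $q(\gamma_p)=-N_1/p \pmod 1$ up to squares, so the set $\{q(\beta):\beta\in D_p\}$ meets a residue class $n \imod 1$ only when $\chi_p(n)$ equals a prescribed sign, namely $\chi_p(-1)=\epsilon_p$ after accounting for the Legendre-symbol value of $-2N_1/p$ recorded in Lemma \ref{Discriminant-Form}. Hence the exponents $n$ appearing in $F_0|\eta_p$ satisfy $\chi_p(n)\in\{0,\epsilon_p\}$, i.e. $a(n)=0$ whenever $\chi_p(n)=-\epsilon_p$. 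For $p=2$ one argues case by case along the three cases of Lemma \ref{Discriminant-Form}: when $N_1\equiv1\imod 4$ the $2$-part is trivial and there is no condition (consistent with $2\nmid N$); when $N_1\equiv 3\imod4$ the component $2_2^{+2}$ forces the sign $\epsilon_2=-1$; when $N_1\equiv 2\imod4$ one reads off $\epsilon_2=\chi_{N_1/2}(-1)$ from $2_1^{+1}\oplus 4_t^{\pm1}$. Assembling these across all $p\mid N$ via the factorization of $\eta_N$ gives exactly the $\epsilon$-condition defining $A^\epsilon(N,k,\chi_D)$, so $\phi(F)\in A^\epsilon(N,k,\chi_D)$.

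I would also need to check that $\phi(F)$ really lies in $A(N,k,\chi_D)$ to begin with (holomorphy on $\mathbb H$, meromorphy at cusps, and the transformation law with character $\chi_D$); holomorphy and meromorphy are immediate from the corresponding properties of the $F_\gamma$, while the character computation is the content of the well-definedness lemma already proved. The main obstacle is the bookkeeping in the local Jordan-component computation: one must pin down precisely which residue classes $n\imod 1$ occur as values of $q$ on $D_p$, track the nonzero constants and the $\chi$-twists introduced when decomposing $\eta_N=\prod_p\eta_p$ via Lemma \ref{Eta-operator}(3), and verify that the $p=2$ cases line up with the somewhat ad hoc definition of $\epsilon_2$. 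Once the single-prime statement "the exponents of $F_0|\eta_p$ lie only in classes $n$ with $\chi_p(n)\ne-\epsilon_p$" is established, the multi-prime case follows formally because the Jordan decomposition is a direct sum and the conditions at distinct primes are independent.
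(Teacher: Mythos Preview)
Your strategy diverges from the paper's, and the detour introduces a real gap.

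The paper does not factor $\eta_N$ prime by prime. It uses the single identity
\[
F_0|W(N)=F_0\Big|S\begin{pmatrix}N&0\\0&1\end{pmatrix}=N^{\frac{k-1}{2}}\sum_{\gamma\in D}F_\gamma(N\tau),
\]
coming straight from the defining formula for $\rho_D(S)$. The $q^n$-coefficient of $\phi(F)$ is then an (empty or nonempty) sum over $\gamma\in D$ with $q(\gamma)\equiv n/N\pmod{\mathbb Z}$, and the $\epsilon$-condition at each prime $p\mid N$ reduces to the purely local question of which residues $n/N\pmod 1$ are represented by $q_p$ on $D_p$. That local question is settled case by case from Lemma \ref{Discriminant-Form}. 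No iteration, no $\eta_p$'s, no Scheithauer formula beyond the elementary $\rho_D(S)$.

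Your route has two problems. First, the ``assembling'' step is not formal. After one application of $\eta_p$ you are holding a scalar form $g=F_0|\eta_p\in A(N,k,\chi_D)$; Scheithauer's Theorem~4.7 no longer applies to $g|\eta_q$, and you give no argument that slashing by $\eta_q$ preserves the $\chi_p$-support condition you just established. (In the paper's logical order, the tool that would do this for you---Corollary~\ref{Fourier-Epsilon}, which recasts the $\delta$-condition as an eigen-equation under $U(N_p)\eta_p$---is proved \emph{after} the present lemma, so you cannot invoke it here.) Second, even your single-prime claim is off by a twist: the exponents of $F_0|\eta_p$ lie in $N_pq_p(\alpha_p)+N_p\mathbb Z$, whereas those of $F_0|W(N)$ lie in $Nq(\gamma)+N\mathbb Z$, and modulo $N_p$ these differ by the unit factor $N/N_p$. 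Hence the sign you read off from $F_0|\eta_p$ is $\chi_p(N/N_p)^{-1}\epsilon_p$, not $\epsilon_p$ itself, and you would have to track these twists through the whole product to land on the correct $\epsilon$.

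The fix is simply to abandon the factorization and compute $F_0|W(N)$ in one shot via $\rho_D(S)$ as the paper does; the same local Jordan-component analysis you sketch (including the $p=2$ case split) then goes through cleanly.
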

\begin{proof}
For any $F\in\mathcal A^\text{inv}(k,\rho_D)$, we show that $F_0|W(N)\in A^\epsilon(N,k,\chi_D)$.
First of all,
\[W(N)=\begin{pmatrix}0&-1\\1&0\end{pmatrix}\begin{pmatrix}N&0\\0&1\end{pmatrix},\] so
\[F_0|W(N)=F_0\left|S\begin{pmatrix}N&0\\0&1\end{pmatrix}\right.=\frac{1}{\sqrt{N}}\sum_{\gamma\in D}F_\gamma\left|\begin{pmatrix}N&0\\0&1\end{pmatrix}\right.=N^{\frac{k-1}{2}}\sum_{\gamma\in D}F_\gamma(N\tau).\]
Assume that $F_\gamma(\tau)=\sum_{n\in\mathbb Z+q(\gamma)}a(\gamma,n)q^n$. Then we have
\[F_0|W(N)=N^{\frac{k-1}{2}}\sum_{n\in\mathbb Z}\left(\sum_{\gamma\in D: \frac{n}{N}= q(\gamma)}a\left(\gamma, nN^{-1}\right)\right)q^n.\] Let $a(n)$ be the Fourier coefficient of $F_0|W(N)$, we are supposed to show that $a(n)=0$ if $(N,n)=1$ and $\chi_p(n)=-\epsilon_p$ for some $p\mid N$.

Assume $N_1\equiv 1\imod 4$. If $\chi_p(n)=-\epsilon_p$, then $\left(\frac n p\right)=-\left(\frac{p^{-1}N\alpha_p}{p}\right)$, so $q_p$ does not represent $\frac{p^{-1}nN}{p}$. Hence $q$ does not represent $\frac{n}{N}$, that is, there is no $\gamma\in D$ such that $q(\gamma)=\frac{n}{N}$. Therefore, $a(n)=0$.

Assume now $N_1\equiv 3\imod 4$. For the odd prime factors, we argue as above. In case of the prime $2$, if $\left(\frac{-1}{n}\right)=1$, then $n\equiv 1\imod 4$. Since $q_2\cong 2_2^{+2}$, there exists no element with $q_2$-norm $\frac{3}{4}$. Therefore, there is no element with $q$-norm $\frac{n}{N}$, since other wise there would be an element $\gamma$ with $N_1q(\gamma)=\frac{nN_1}{N}=\frac{n}{4}$ and hence an element $\gamma_2$ with $q(\gamma_2)=\frac{3}{4}$.

Assume $N_1\equiv 2\imod 4$. Similarly, we only have to consider the $2$-component. We first deal with the case $N_1\equiv 2\imod 16$. Now $\chi_2(n)=\left(\frac 2 n\right)$. That $\chi_2(n)=-\epsilon_2$ means $n\equiv \pm 3\imod 8$. If there exists $\gamma$ such that $q(\gamma)=\frac{n}{N}$ hence $\frac{N_1}{2}q(\gamma)=\frac{n}{8}$, then there exists $\gamma_2$ such that $q(\gamma_2)=\pm \frac{3}{8}$. But explicit computations show that the local possible norms at $2$ are
\[0,\quad -\frac{1}{8}, \quad \frac{1}{2}, \quad \frac{1}{4}, \quad \frac{1}{8}, \quad \frac{3}{4}.\] So there are no elements with norms $\pm\frac{3}{8}$ and it follows that $a(n)=0$.
The other cases when $N_1\equiv 2\imod 4$ follow similarly and we omit the details.
\end{proof}

Define the operator $J(p,\delta_p): A(N,k,\chi_D)\rightarrow A(N,k,\chi_D)$ for each $\delta_p\in\{\pm 1\}$ by
\[f|J(p,\delta_p)=\frac{1}{2}\left(f+\delta_pC_p f|U(N_p)|\eta_p\right).\]
where $C_p=N_p^{-\frac{k-2}{2}}W(\chi_p)^{-1}\chi_p(-1)$. Here $W(\chi)$ is the Gauss sum of $\chi$.

\begin{Lem}\label{Decomposition-Lem}
Assume $f|J(p,\delta_p)=\sum_n a(n)q^n$. Then $a(n)=0$ if $\chi_p(n)=-\delta_p$. Moreover, $f=f|J(p,+1)+f|J(p,-1)$.
\end{Lem}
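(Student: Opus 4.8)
The plan is to analyze how the operator $U(N_p)|\eta_p$ acts on Fourier coefficients and to extract from that the claimed vanishing. First I would compute the Fourier expansion of $f|U(N_p)|\eta_p$. Writing $f=\sum_n a(n)q^n$, the operator $U(N_p)$ selects the coefficients $a(N_p n)$, and then $\eta_p=\gamma_p\begin{pmatrix}N_p&0\\0&1\end{pmatrix}$ scales the variable. The key input is that, up to the normalizing constant, this composite operator behaves on $q$-expansions like a twist by the Gauss sum: concretely, one expects
\[
\big(f\,\big|\,U(N_p)\,\big|\,\eta_p\big)(\tau)=C_p^{-1}\sum_n \chi_p(n)\,a(n)\,q^n,
\]
so that $C_p\, f|U(N_p)|\eta_p=\sum_n\chi_p(n)a(n)q^n$. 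Granting this identity, the lemma is immediate: the $n$-th coefficient of $f|J(p,\delta_p)$ is $\tfrac12\big(a(n)+\delta_p\chi_p(n)a(n)\big)=\tfrac12\big(1+\delta_p\chi_p(n)\big)a(n)$, which vanishes exactly when $\chi_p(n)=-\delta_p$, and summing over $\delta_p=\pm1$ gives $f|J(p,+1)+f|J(p,-1)=\sum_n a(n)q^n=f$.

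The substance of the argument is therefore establishing the boxed identity for $C_p f|U(N_p)|\eta_p$. I would proceed by relating $\eta_p$ to the standard twisting matrices. Since $\gamma_p\equiv S\imod{(N_p)^2}$ and $\gamma_p\equiv I\imod{(N/N_p)^2}$, the action of $f|_k\eta_p$ can be unwound using the transformation behavior of $f$ under $\Gamma_0(N)$ together with an explicit coset computation for $\begin{pmatrix}N_p&0\\0&1\end{pmatrix}$. The classical fact (Hecke; see also the $W(N)$-type computations already used in the previous lemma, and Miyake's book \cite{miyake2006modular}) is that for a primitive character $\chi_p$ of modulus $N_p$, one has
\[
\sum_{j\imod{N_p}}\chi_p(j)\, f\Big|_k\begin{pmatrix}1&j/N_p\\0&1\end{pmatrix} \;=\; W(\chi_p)\sum_n\chi_p(n)a(n)q^n,
\]
and the left-hand side can be rewritten, after applying the $\Gamma_0(N)$-transformation law to move the $j$-translates through $\gamma_p$, precisely as a multiple of $f|U(N_p)|\eta_p$. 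Matching the scalar factors — the $N_p^{k/2-1}$ from $U(N_p)$, the $N_p^{k/2}$ from $\det\begin{pmatrix}N_p&0\\0&1\end{pmatrix}$, the character value $\chi_p(-1)$ coming from the $S$-part of $\gamma_p$, and the Gauss sum $W(\chi_p)$ — yields exactly the constant $C_p=N_p^{-(k-2)/2}W(\chi_p)^{-1}\chi_p(-1)$ defined before the lemma, confirming the identity.

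The main obstacle is this bookkeeping of normalization constants: getting the power of $N_p$, the Gauss sum (and whether it is $W(\chi_p)$ or its conjugate $\overline{W(\chi_p)}=\chi_p(-1)W(\chi_p)$, hence the role of the $\chi_p(-1)$ factor in $C_p$), and the choice of coset representatives all consistent. A clean way to sidestep sign ambiguities is to first verify the identity formally at the level of the operator on $q$-expansions on a spanning set — for instance by checking it on a single monomial $q^n$ with $(n,N_p)=1$ and separately on $q^n$ with $p\mid n$ (where both sides give $0$, since $U(N_p)$ kills the relevant terms and $\chi_p(n)=0$) — and then invoking linearity; this reduces the whole computation to one Gauss-sum evaluation $\sum_{j}\chi_p(j)e(nj/N_p)=\chi_p(n)W(\chi_p)$. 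Once the operator identity $C_p f|U(N_p)|\eta_p=\sum_n\chi_p(n)a(n)q^n$ is in hand, the conclusion of Lemma \ref{Decomposition-Lem} follows in two lines as indicated above; I would also note that this same identity shows $f|U(N_p)|\eta_p$ is again in $A(N,k,\chi_D)$, so $J(p,\delta_p)$ is a well-defined operator, and that the two projections $J(p,+1)$, $J(p,-1)$ are complementary idempotents cutting out the $\pm$ eigenspaces of $C_p\,U(N_p)\,\eta_p$.
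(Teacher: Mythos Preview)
Your overall strategy---relate $U(N_p)\eta_p$ to the $\chi_p$-twist and read off the projector---is the same as the paper's, but the central identity you rely on,
\[
C_p\,f\bigl|U(N_p)\bigr|\eta_p \;=\; \sum_n \chi_p(n)\,a(n)\,q^n,
\]
is not correct as stated, and this is a genuine gap. When you unwind $f|U(N_p)|\eta_p$ through the coset sum, the $a$'s you sum over run over \emph{all} residues modulo $N_p$, not just the units; the terms with $p\mid a$ do not disappear. What one actually obtains (this is equation~(\ref{J-operator}) in the paper) is
\[
f\bigl|J(p,\delta_p)\bigr. \;=\; \tfrac12\sum_n\bigl(1+\delta_p\chi_p(n)\bigr)a(n)\,q^n \;+\; \tfrac12\,\delta_p\,C_p\,p^{\frac{k-2}{2}}\, f\Bigl|U(p^{e-1})\begin{pmatrix}1&0\\0&p\end{pmatrix}\eta_p\Bigr.,
\]
and the second term is nonzero in general. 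The paper then checks, via a $\Gamma_0(N)$-matrix identity, that this extra piece is a $q$-series supported on indices with $p\mid n$; only then does the conclusion follow, since $\chi_p(n)=-\delta_p$ forces $p\nmid n$, so both summands vanish at such $n$. Your argument skips precisely this step.

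Your proposed shortcut---checking the operator identity on monomials $q^n$ and invoking linearity---does not work either: the operator $\eta_p$ involves a genuine $\mathrm{SL}_2(\mathbb{Z})$-substitution through $\gamma_p$, so it is not defined on formal $q$-series and cannot be tested term by term. In particular, your claim that for $p\mid n$ ``$U(N_p)$ kills the relevant terms'' is false (e.g.\ if $N_p\mid n$ then $q^n|U(N_p)\neq 0$), and in any case one cannot apply $\eta_p$ to a bare monomial. The ``moreover'' part, by contrast, is immediate from the definition $J(p,\pm1)=\tfrac12(\mathrm{id}\pm C_p\,U(N_p)\eta_p)$ and needs no identity at all.
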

\begin{proof}
Choose $a,b\in\mathbb Z$ such that $(a,p)=1$ and $ab+1\equiv 0\imod N_p$ and $a\equiv b\imod N/N_p$. Let
\[\gamma=\begin{pmatrix}1&a\\0&N_p\end{pmatrix}\gamma_p\begin{pmatrix}1&b\\0&N_p\end{pmatrix}^{-1}.\]
Hence
$\gamma\in \Gamma_0(N)$, $\chi_D(\gamma)=\chi_p(-b)$, and
\[f\left|\begin{pmatrix}1&a\\0&N_p\end{pmatrix}\gamma_p\right.=\chi_p(-b)f\left|\begin{pmatrix}1&b\\0&N_p\end{pmatrix}\right.
.\]
Take the summation $\sum'$ with $a$, hence $b$, over $\left(\mathbb Z/N_p\mathbb Z\right)^\times$, and we have
\[{\sum_a}' f\left|\begin{pmatrix}1&a\\0&N_p\end{pmatrix}\eta_p\right.=W(\chi_p)\chi_p(-1)
\sum_{n}\chi_p(n)a(n)q^n,\]
where $W(\chi_p)$ is the Gauss sum and we used the fact that $\chi_p$ is primitive modulo $N_p$. On the other hand, assuming $N_p=p^e$,
\[N_p^{\frac{k}{2}-1}{\sum_{a}}'f\left|\begin{pmatrix}1&a\\0&N_p\end{pmatrix}\right.
=f|U(p^e)-p^{\frac{k}{2}-1}f\left|U(p^{e-1})\begin{pmatrix}1&0\\0&p\end{pmatrix}\right.,\] which implies
\begin{equation}\label{J-operator}
f|J(p,\delta_p)=\frac{1}{2}\sum_n\left(1+\delta_p\chi_p(n)\right)a(n)q^n+\frac{1}{2}\delta_pC_pp^{\frac{k-2}{2}}
f\left|U(p^{e-1})\begin{pmatrix}1&0\\0&p\end{pmatrix}\eta_p\right..\end{equation}
It is easy to check that
\[\begin{pmatrix}1&0\\0&p\end{pmatrix}\eta_p\begin{pmatrix}p&0\\0&1\end{pmatrix}^{-1}\eta_p^{-1}\equiv \left\{\begin{matrix}I& \imod N_p\\ \begin{pmatrix}p^{-1}&0\\ 0&p\end{pmatrix}& \imod N/N_p\end{matrix}\right.\] It follows that in the second term of the above expression for $f|J(p,\delta_p)$, if the $q^n$-Fourier coefficient is not zero, then $p\mid n$.
Now it is clear that if $\chi_p(n)=-\delta_p$, then the $q^n$-Fourier coefficients in both the two terms are zero, and this finishes the proof of the first statement.

Suppose $b(n)$ is the n-th Fourier coefficient of $f-f|J(p,+1)-f|J(p,-1)$. It is obvious from the expression above that $b(n)=0$ if $p\nmid n$ and then it must be $0$ since $\chi_D$ is primitive (see, for example, the proof of Theorem 4.6.4 in \cite{miyake2006modular}).
\end{proof}

We prove a lemma before we prove the decomposition of the whole space into subspaces with $\delta$-conditions.

\begin{Lem}\label{Commute}
Let $p,q$ be two distinct prime divisors of $N$ and $\delta_p,\delta_q\in\{\pm 1\}$. Then
$J(p,\delta_p)$ and $J(q,\delta_q)$ commute.
\end{Lem}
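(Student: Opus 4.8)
The plan is to reduce the claim to a single identity among the operators $U(N_p)$, $U(N_q)$, $\eta_p$, $\eta_q$, and then verify that identity by sliding these operators past one another one step at a time, using Lemma \ref{Eta-operator} and keeping track of the scalars that appear. First I would expand both $f|J(p,\delta_p)|J(q,\delta_q)$ and $f|J(q,\delta_q)|J(p,\delta_p)$ by the defining formula $f|J(p,\delta_p)=\tfrac12\bigl(f+\delta_pC_pf|U(N_p)|\eta_p\bigr)$. Each expands into four terms, and three of them — the term $f$, the term $\delta_pC_pf|U(N_p)|\eta_p$, and the term $\delta_qC_qf|U(N_q)|\eta_q$ — coincide in either order because $\delta_p,\delta_q,C_p,C_q$ are scalars. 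So the lemma reduces (using also the trivial $C_pC_q=C_qC_p$) to the single operator identity
\[
f|U(N_p)\eta_pU(N_q)\eta_q \;=\; f|U(N_q)\eta_qU(N_p)\eta_p,\qquad f\in A(N,k,\chi_D).
\]

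To prove this identity I would move operators past one another one at a time, noting that every scalar factor below lies in $\{\pm1\}$ since $\chi_p$ and $\chi_q$ are quadratic. By Lemma \ref{Eta-operator}(5) with $(m_1,m_2)=(p,q)$, $f|\eta_pU(N_q)=\chi_p(N_q)\,f|U(N_q)\eta_p$, so the left-hand side equals $\chi_p(N_q)\,f|U(N_p)U(N_q)\eta_p\eta_q$. Next, since $(N_p,N_q)=1$, the operators $U(N_p)$ and $U(N_q)$ commute — this is the classical identity $U(N_p)U(N_q)=U(N_pN_q)=U(N_q)U(N_p)$, immediate from the coset description, e.g. $\begin{pmatrix}1&j\\0&N_p\end{pmatrix}\begin{pmatrix}1&i\\0&N_q\end{pmatrix}=\begin{pmatrix}1&i+jN_q\\0&N_pN_q\end{pmatrix}$ — so the expression becomes $\chi_p(N_q)\,f|U(N_q)U(N_p)\eta_p\eta_q$. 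Comparing Lemma \ref{Eta-operator}(3) for $(m_1,m_2)=(p,q)$ and for $(q,p)$, both of which compute $f|\eta_{pq}$, gives $f|\eta_p\eta_q=\chi_p(N_q)\chi_q(N_p)\,f|\eta_q\eta_p$; substituting and using $\chi_p(N_q)^2=1$ collapses the expression to $\chi_q(N_p)\,f|U(N_q)U(N_p)\eta_q\eta_p$. Finally, Lemma \ref{Eta-operator}(5) with $(m_1,m_2)=(q,p)$ gives $f|\eta_qU(N_p)=\chi_q(N_p)\,f|U(N_p)\eta_q$, hence $f|U(N_p)\eta_q=\chi_q(N_p)\,f|\eta_qU(N_p)$; inserting this and using $\chi_q(N_p)^2=1$ turns the expression into $f|U(N_q)\eta_qU(N_p)\eta_p$, which is the right-hand side.

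I do not anticipate any genuine difficulty here; the only thing that needs care is making sure each of the signs $\chi_p(N_q)$ and $\chi_q(N_p)$ is produced an even number of times so that it cancels, and recording the one input that is not contained in Lemma \ref{Eta-operator}, namely the commutativity of $U(N_p)$ and $U(N_q)$. If one prefers, the bookkeeping can be hidden by setting $\Xi_p=U(N_p)\eta_p$ and $\Xi_q=U(N_q)\eta_q$ and checking $\Xi_p\Xi_q=\Xi_q\Xi_p$ directly by the chain above; then, since $J(p,\delta_p)=\tfrac12(1+\delta_pC_p\Xi_p)$ and $J(q,\delta_q)=\tfrac12(1+\delta_qC_q\Xi_q)$ are affine functions of $\Xi_p$ and $\Xi_q$ respectively, commutativity of $\Xi_p$ and $\Xi_q$ immediately yields the lemma.
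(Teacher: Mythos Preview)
Your proof is correct and follows essentially the same approach as the paper: reduce to the mixed term $f|U(N_p)\eta_pU(N_q)\eta_q=f|U(N_q)\eta_qU(N_p)\eta_p$, commute each $\eta$ past the opposite $U$ via Lemma~\ref{Eta-operator}(5) and use $U(N_p)U(N_q)=U(N_q)U(N_p)$, so that everything comes down to $\chi_p(N_q)\,g|\eta_p\eta_q=\chi_q(N_p)\,g|\eta_q\eta_p$, which is exactly the identity the paper isolates and which holds because both sides equal $g|\eta_{pq}$ by Lemma~\ref{Eta-operator}(3). The paper simply states this reduction more tersely, while you spell out each commutation and sign cancellation; the content is the same.
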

\begin{proof}
It is not hard to see that we need to prove
\[\chi_p(N_q)f|\eta_p\eta_q=\chi_q(N_p)f|\eta_q\eta_p.\]
But by Lemma \ref{Eta-operator}, both sides are equal to $f|\eta_{pq}$. This finishes the proof.
\end{proof}

\begin{Prop}\label{Decomposition} We have
\[A(N,k,\chi_D)=\bigoplus_\delta A^\delta(N,k,\chi_D),\]
where $\delta$ runs over $\{\pm 1\}^{\omega(N)}$.
\end{Prop}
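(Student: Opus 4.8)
The plan is to use the operators $J(p,\delta_p)$ constructed above as the building blocks of a system of commuting projections. First I would observe that, by Lemma \ref{Decomposition-Lem}, for a fixed prime $p\mid N$ we have $f=f|J(p,+1)+f|J(p,-1)$, and that the two summands lie in complementary ``$\chi_p$-eigenspaces'': the Fourier coefficients $a(n)$ of $f|J(p,\delta_p)$ vanish whenever $\chi_p(n)=-\delta_p$. The first real step is therefore to check that each $J(p,\delta_p)$ is idempotent, i.e. $J(p,\delta_p)^2=J(p,\delta_p)$, and that $J(p,+1)J(p,-1)=0$; both follow by expanding the definition and using the vanishing statement of Lemma \ref{Decomposition-Lem} together with the fact (hidden in the parenthetical remark about primitivity at the end of that proof) that a form in $A(N,k,\chi_D)$ supported only on $q^n$ with $p\mid n$ must vanish. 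Equivalently one shows $C_p^2 \cdot (\text{relevant eta/U composition}) = \mathrm{id}$ on the image, which is just Lemma \ref{Eta-operator}(4)--(5) bookkeeping.

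Next I would form the composite operator $J_\delta=\prod_{p\mid N} J(p,\delta_p)$ for each sign vector $\delta=(\delta_p)_{p\mid N}\in\{\pm1\}^{\omega(N)}$. By Lemma \ref{Commute} the factors commute, so $J_\delta$ is well-defined independently of the order, and it is again idempotent because each factor is. Iterating the identity $f=f|J(p,+1)+f|J(p,-1)$ over all primes $p\mid N$ gives
\[
f=\sum_{\delta\in\{\pm1\}^{\omega(N)}} f|J_\delta,
\]
so the spaces $J_\delta\bigl(A(N,k,\chi_D)\bigr)$ span $A(N,k,\chi_D)$. The vanishing statement of Lemma \ref{Decomposition-Lem}, applied prime by prime, shows that $f|J_\delta\in A^\delta(N,k,\chi_D)$: if $\chi_p(n)=-\delta_p$ for some $p\mid N$, then applying $J(p,\delta_p)$ last (legitimate by commutativity) kills the $q^n$-coefficient. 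Hence $J_\delta$ maps $A(N,k,\chi_D)$ into $A^\delta(N,k,\chi_D)$, and it restricts to the identity on $A^\delta(N,k,\chi_D)$ since a form already satisfying the $\delta$-condition is unaffected by each $J(p,\delta_p)$ (its ``wrong'' half is zero and the ``right'' half operator fixes it). This gives surjectivity of $J_\delta$ onto $A^\delta(N,k,\chi_D)$ and the direct-sum decomposition at the level of the spanning.

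It remains to argue that the sum is direct, i.e. $A^\delta\cap\sum_{\delta'\neq\delta}A^{\delta'}=0$. Here I would argue purely in terms of the $\delta$-conditions on Fourier coefficients: if $f\in A^\delta$ also lies in the span of the other $A^{\delta'}$, pick any $\delta'\neq\delta$; they differ in some coordinate $p$, say $\delta_p=+1$ and $\delta'_p=-1$. A form in $A^{\delta'}$ has $a(n)=0$ whenever $\chi_p(n)=+1$, while a form in $A^\delta$ has $a(n)=0$ whenever $\chi_p(n)=-1$; averaging over the full decomposition, or more cleanly applying the projection $J_\delta$ to the relation $f=\sum_{\delta'\neq\delta} f_{\delta'}$ and using $J_\delta J_{\delta'}=0$ for $\delta'\neq\delta$ (itself a consequence of $J(p,+1)J(p,-1)=0$ in the differing coordinate), forces $f=f|J_\delta=0$. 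The main obstacle I anticipate is not the combinatorics of the sign vectors but the verification that the $J(p,\delta_p)$ are genuine projections on the nose — that the constant $C_p=N_p^{-(k-2)/2}W(\chi_p)^{-1}\chi_p(-1)$ is exactly right so that the cross terms cancel and the ``correction'' second term in \eqref{J-operator} behaves as claimed — and the careful use of the primitivity of $\chi_D$ to eliminate forms supported on $p\mid n$; once those are in hand, the decomposition is a formal consequence of orthogonality and completeness of the family $\{J_\delta\}$.
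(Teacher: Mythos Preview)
Your proposal is correct and uses the same ingredients as the paper (the operators $J(p,\delta_p)$, Lemmas \ref{Decomposition-Lem} and \ref{Commute}, and the primitivity of $\chi_D$), but the organization differs. You establish idempotency and orthogonality of the $J(p,\delta_p)$ up front and then deduce both spanning and directness from the formal properties of the commuting projections $J_\delta$; the paper instead proves directness by a bare-hands Fourier-coefficient argument---splitting a relation $\sum_\delta f_\delta=0$ at each prime $p$ into its $\delta_p=+1$ and $\delta_p=-1$ halves, observing that their common value is supported on $p\mid n$ and hence vanishes by primitivity, then iterating---and only afterwards (in Lemma \ref{Projection}) records the idempotency you use. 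Your route is a bit more structural; the paper's is a bit more elementary for the directness half, since it never needs to know that the $J(p,\delta_p)$ are projections.

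One caveat: your parenthetical ``equivalently one shows $C_p^2\cdot(\text{relevant }\eta/U\text{ composition})=\mathrm{id}$'' via Lemma \ref{Eta-operator}(4)--(5) is not straightforward, because those identities govern $\eta_m^2$ and commutation at \emph{coprime} indices, not $U(N_p)\eta_p U(N_p)\eta_p$ with the same $p$. The Fourier-coefficient argument you sketch first---apply formula \eqref{J-operator} to a form already satisfying the $\delta_p$-condition and invoke primitivity---is the clean way to get $J(p,\delta_p)^2=J(p,\delta_p)$ and $J(p,\delta_p)J(p,-\delta_p)=0$, and it is exactly what the paper does in Lemma \ref{Projection}.
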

\begin{proof}
Suppose $\sum_\delta f_\delta=0$ with $f_\delta\in A^\delta(N,k,\chi)$. Separate the sum for each $p\mid N$ as follows:
\[f_p:=\sum_{\delta:\delta_p=1}f_\delta=-\sum_{\delta:\delta_p=-1}f_\delta.\]
From the definition of $A^\delta(N,k,\chi_D)$, $f_p$ is a modular form with Fourier coefficients $a(n)$ such that $a(n)=0$ if $p\nmid n$. Since the conductor of $\chi_D$ is $N$, it follows that $f_p=0$ (see, for example, the proof of Theorem 4.6.4 in \cite{miyake2006modular}). By considering all $p\mid N$ in the same way, we see that $f_\delta=0$ for any $\delta$.

We still need to prove that
\[A(N,k,\chi_D)=\sum_\delta A^\delta(N,k,\chi_D).\]
This can be seen from the previous two lemmas by successively applying the $J(p,+1)$ and $J(p,-1)$ until we exhaust all $p\mid N$. Indeed, for each $\delta$, assuming $f=\sum_n a(n)q^n$, we consider
\[g=f|J(p_1,\delta_{p_1})J(p_2,\delta_{p_2})\cdots J(p_l,\delta_{p_l})=\sum_nb(n)q^n,\]
where $N=N_{p_1}N_{p_2}\cdots N_{p_l}$ and $l=\omega(N)$. By Lemma \ref{Decomposition-Lem}, we see that if $\chi_{p_l}(n)=-\delta_p$, then $b(n)=0$. Then by Lemma \ref{Commute}, the same holds for any $p_i$, $1\leq i\leq l$, hence $g\in A^\delta(N,k,\chi_D)$.
In the end, we have a decomposition of $2^{\omega(N)}$ terms for $f$, each term of which belongs to $A^\delta(N,k,\chi_D)$ for a unique $\delta$.
\end{proof}

\begin{Lem} \label{Projection} Let $p\mid N$ be a prime and let $\delta_p\in\{\pm 1\}$.
Let $f\in A(N,k,\chi_D)$ with $f(\tau)=\sum_{n}a(n)q^n$ be such that $a(n)=0$ if $\chi_p(n)=-\delta_p$. Then $f|J(p,\delta_p)=f$ and $f|J(p,-\delta_p)=0$. In particular, $J(p,\delta_p)^2=J(p,\delta_p)$ and $J(p,\delta_p)J(p,-\delta_p)=0$.
\end{Lem}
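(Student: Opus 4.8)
The plan is to deduce the statement from Lemma \ref{Decomposition-Lem} together with the primitivity of $\chi_D$, rather than re-entering the explicit formula \eqref{J-operator}. The starting point is the observation that, for \emph{any} $g\in A(N,k,\chi_D)$, Lemma \ref{Decomposition-Lem} supplies three facts at once: $g=g|J(p,\delta_p)+g|J(p,-\delta_p)$; the $n$-th Fourier coefficient of $g|J(p,\delta_p)$ vanishes whenever $\chi_p(n)=-\delta_p$; and the $n$-th Fourier coefficient of $g|J(p,-\delta_p)$ vanishes whenever $\chi_p(n)=\delta_p$ (apply Lemma \ref{Decomposition-Lem} with $-\delta_p$ in place of $\delta_p$). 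Moreover both summands again lie in $A(N,k,\chi_D)$, since $J(p,\pm\delta_p)$ are operators on that space. I would also record that the ``$\delta_p$-condition at $p$'' --- the vanishing of $a(n)$ whenever $\chi_p(n)=-\delta_p$ --- cuts out a linear subspace of $A(N,k,\chi_D)$, so it is preserved under sums and differences.

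Now assume $f$ satisfies the hypothesis, i.e. $f$ lies in that subspace. Then $f|J(p,-\delta_p)=f-f|J(p,\delta_p)$ is a difference of forms in the subspace, hence its $n$-th coefficient vanishes whenever $\chi_p(n)=-\delta_p$; but by the first paragraph its $n$-th coefficient also vanishes whenever $\chi_p(n)=\delta_p$. Together this says the Fourier coefficients of $f|J(p,-\delta_p)$ vanish whenever $\chi_p(n)\neq 0$, that is, whenever $p\nmid n$. Since $\chi_D$ is primitive modulo $N$ and $p\mid N$, a form in $A(N,k,\chi_D)$ supported on multiples of $p$ must vanish --- the same input used in the proofs of Lemma \ref{Decomposition-Lem} and Proposition \ref{Decomposition}, cf. the proof of Theorem 4.6.4 in \cite{miyake2006modular}. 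Hence $f|J(p,-\delta_p)=0$, and then the decomposition gives $f|J(p,\delta_p)=f$.

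For the idempotence relations I would apply the two conclusions just obtained to $g=f|J(p,\delta_p)$, where $f\in A(N,k,\chi_D)$ is arbitrary: by Lemma \ref{Decomposition-Lem} this $g$ satisfies the $\delta_p$-condition at $p$, so $g|J(p,\delta_p)=g$ and $g|J(p,-\delta_p)=0$, which are precisely $J(p,\delta_p)^2=J(p,\delta_p)$ and $J(p,\delta_p)J(p,-\delta_p)=0$.

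Essentially every step here is formal bookkeeping with Fourier supports; the single substantive ingredient is the primitivity fact invoked above, which is already available in the paper, so I do not expect a genuine obstacle. The one point I would be careful about is keeping straight which of the exclusion sets $\{n:\chi_p(n)=\delta_p\}$ and $\{n:\chi_p(n)=-\delta_p\}$ is attached to $J(p,\delta_p)$ versus $J(p,-\delta_p)$, so that their union is correctly identified with $\{n:p\nmid n\}$.
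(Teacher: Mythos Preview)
Your proof is correct and follows essentially the same strategy as the paper: show that $f|J(p,-\delta_p)$ has vanishing $n$-th Fourier coefficient whenever $p\nmid n$, and then invoke the primitivity of $\chi_D$ to conclude it is zero. The only difference is packaging: the paper re-enters the explicit formula \eqref{J-operator} to read off $b(n)=\tfrac{1}{2}(1+\delta_p'\chi_p(n))a(n)$ for $p\nmid n$ directly, whereas you obtain the same support condition by combining the two abstract statements of Lemma~\ref{Decomposition-Lem} (the decomposition $f=f|J(p,\delta_p)+f|J(p,-\delta_p)$ together with the support constraint on each summand) and linearity of the $\delta_p$-condition.
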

\begin{proof} Assume $\delta_p'\in\{\pm 1\}$ and
\[f=\sum_n a(n)q^n\quad \text{and}\quad f|J(p,\delta_p')=\sum_nb(n)q^n.\]
Assume $p\nmid n$. Then by Equation (\ref{J-operator}), $b(n)=\frac{1}{2}(1+\delta_p'\chi_p(n))a(n)$.

If $\delta_p'=-\delta_p$, then one of $(1-\delta_p\chi_p(n))$ and $a(n)$ is zero, so $b(n)=0$. This implies $f|J(p,-\delta_p)=0$.
If $\delta_p'=\delta_p$, then $b(n)=a(n)$, which implies $f|J(p,\delta_p)=f$.
\end{proof}

\begin{Cor}\label{Fourier-Epsilon}
Let $f\in A(N,k,\chi_D)$. Then $f\in A^\delta(N,k,\chi_D)$ if and only if \[f=\delta_pC_pf|U(N_p)\eta_p, \quad\text{ for each } p\mid N.\]
\end{Cor}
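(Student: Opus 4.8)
The plan is to reduce the statement to Lemmas \ref{Decomposition-Lem} and \ref{Projection} by rewriting the operator identity $f = \delta_p C_p f|U(N_p)\eta_p$ as a fixed-point equation for $J(p,\delta_p)$. From the definition
\[
f|J(p,\delta_p) = \tfrac{1}{2}\bigl(f + \delta_p C_p\, f|U(N_p)|\eta_p\bigr),
\]
we get $2\,f|J(p,\delta_p) - f = \delta_p C_p\, f|U(N_p)|\eta_p$, so the equation $f = \delta_p C_p\, f|U(N_p)\eta_p$ holds \emph{if and only if} $f = f|J(p,\delta_p)$. Thus the corollary will follow once we show, for each prime $p\mid N$, that $f$ satisfies the local $\delta_p$-condition (i.e.\ the $q^n$-coefficient of $f$ vanishes whenever $\chi_p(n) = -\delta_p$) precisely when $f = f|J(p,\delta_p)$.

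The two implications of this local equivalence are exactly the two lemmas already in hand. If $f$ satisfies the local $\delta_p$-condition, then Lemma \ref{Projection} gives $f|J(p,\delta_p) = f$. Conversely, if $f = f|J(p,\delta_p)$, write $f|J(p,\delta_p) = \sum_n a(n)q^n$; since this equals $f$, the $a(n)$ are the Fourier coefficients of $f$, and Lemma \ref{Decomposition-Lem} asserts $a(n) = 0$ whenever $\chi_p(n) = -\delta_p$, which is the local $\delta_p$-condition.

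To finish, I would unwind the quantifiers in the definition of $A^\delta(N,k,\chi_D)$: membership means that $a(n) = 0$ as soon as $\chi_p(n) = -\delta_p$ for \emph{some} $p\mid N$, which is logically the same as requiring, for \emph{each} $p\mid N$, that $a(n) = 0$ whenever $\chi_p(n) = -\delta_p$. Combining this with the local equivalence above, $f\in A^\delta(N,k,\chi_D)$ iff $f = f|J(p,\delta_p)$ for every $p\mid N$, iff $f = \delta_p C_p\, f|U(N_p)\eta_p$ for every $p\mid N$, as claimed.

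There is no serious obstacle here; the argument is a direct assembly of Lemmas \ref{Decomposition-Lem} and \ref{Projection} together with the elementary rearrangement of the $J(p,\delta_p)$-identity. The only point requiring a moment's care is the quantifier bookkeeping when passing between the "for some $p$" formulation in the definition of $A^\delta$ and the per-prime operator identities, but this is purely formal.
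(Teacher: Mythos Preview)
Your proof is correct and follows essentially the same route as the paper: both reduce the operator identity to a statement about the $J$-operators (you phrase it as $f = f|J(p,\delta_p)$, the paper as $f|J(p,-\delta_p)=0$, which are trivially equivalent via the definition). The only minor difference is that for the converse you invoke Lemma~\ref{Decomposition-Lem} directly, whereas the paper cites Proposition~\ref{Decomposition} together with Lemma~\ref{Projection}; your path is in fact slightly more economical, since the full direct-sum decomposition is not really needed here.
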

\begin{proof} By Proposition \ref{Decomposition} and Lemma \ref{Projection}, we know that $f\in A^\delta(N,k,\chi_D)$ if and only if $f|J(p,-\delta_p)=0$ for each $p\mid N$, hence if and only if $f=\delta_pC_pf|U(N_p)\eta_p$ for each $p\mid N$. This finishes the proof.
\end{proof}

\begin{Cor}\label{Holomorphy}
Assume $N_1\equiv 1,3\imod 4$ and let $f$ be a function on the upper half plane that satisfies all properties of a weakly holomorphic modular form except possibly the meromorphy at cusps.
If $f$ is meromorphic (or is holomorphic, or vanishes, respectively) at $\infty$, then $f\in A^\delta(N,k,\chi_D)$ (or $M^\delta(N,k,\chi_D)$, or $S^\delta(N,k,\chi_D)$, respectively).
\end{Cor}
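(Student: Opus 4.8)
Throughout I read ``$f$ satisfies all properties of a weakly holomorphic form except possibly the meromorphy at cusps'' as: $f$ is holomorphic on the upper half plane, satisfies $(f|_kM)(\tau)=\chi_D(d)f(\tau)$ for every $M=\left(\begin{smallmatrix}a&b\\c&d\end{smallmatrix}\right)\in\Gamma_0(N)$, and has a Fourier expansion at $\infty$ obeying the $\delta$-condition (a condition on the coefficients at $\infty$ alone, hence meaningful here). What must be shown is that the given behaviour at $\infty$ propagates to \emph{every} cusp of $\Gamma_0(N)$; that is exactly the gap between the hypotheses and membership in $A^\delta$, $M^\delta$, or $S^\delta$. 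The first observation is that the operator identities of Section 3 are formal consequences of the transformation law under $\Gamma_0(N)$, the shape of the $q$-expansion at $\infty$, and the primitivity of $\chi_D$; none of the proofs of Lemma \ref{Decomposition-Lem}, Lemma \ref{Projection} or Corollary \ref{Fourier-Epsilon} uses meromorphy at cusps other than $\infty$ (the one delicate ingredient — that a function supported on multiples of a single $p\mid N$ carrying a primitive nebentypus mod $N$ must vanish — can be re-established for weakly holomorphic forms by a $T^{1/p}$-invariance argument that only involves the expansion at $\infty$). Hence the $\delta$-condition already gives
\[f=\delta_pC_p\,f|U(N_p)\eta_p\qquad\text{for every }p\mid N.\]

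To transfer the behaviour at $\infty$ outward, apply $|\eta_p^{-1}$ to obtain $f|\eta_p^{-1}=\delta_pC_p\,f|U(N_p)$; since $f|U(N_p)$ is a sum of weight-$k$ slashes of $f$ by upper-triangular matrices, it (hence $f|\eta_p^{-1}$) is meromorphic, resp. holomorphic, resp. vanishing, at $\infty$ as soon as $f$ is. Composing over $p\mid N$ by means of Lemma \ref{Eta-operator}(3),(5), one finds for every divisor $m$ of $N$ that $f|\eta_m^{-1}$ is a nonzero constant times $f|U(N_m)$, so it has the required behaviour at $\infty$; equivalently $f$ has it at the cusp $\eta_m^{-1}\infty$, which is $\Gamma_0(N)$-equivalent to a cusp of denominator $N/N_m$. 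When $N_1\equiv 1\imod 4$ the level $N=N_1$ is square-free, the $N/N_m$ exhaust the divisors of $N$, and these cusps form a full set of inequivalent representatives; so there $f$ is weakly holomorphic (resp. holomorphic, resp. cuspidal), and the $\delta$-condition at $\infty$ puts it into $A^\delta$ (resp. $M^\delta$, resp. $S^\delta$).

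When $N_1\equiv 3\imod 4$ one has $N=4N_1$, and the cusps obtained above are precisely those of denominator $e$ or $4e$, $e\mid N_1$; what is left are the cusps of denominator $2e$, the analogues of the cusps $1/(2m_1)$ at which the lift ``sees nothing'' in Proposition \ref{Injection}. For these I would invoke the $\delta$-condition at the prime $2$, where $\chi_2$ has conductor $4$ and the $2$-adic Jordan component is $q_2\cong 2_2^{+2}$: at a denominator-$2e$ cusp one carries out the same bookkeeping of locally representable norms as in the proof that $\phi(\mathcal A^\text{inv}(k,\rho_D))\subset A^\epsilon(N,k,\chi_D)$, and since only finitely many norms occur at $2$ this forces the expansion of $f$ there to be bounded below (and to have no negative, resp. only positive, terms in the holomorphic, resp. vanishing, case). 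This is where $N_1\not\equiv 2\imod 4$ is needed: for $N_1\equiv 2\imod 4$ the prime $2$ contributes an extra block $4_t^{\pm1}$ and the cusp pattern at $2$ is more intricate. Beyond this, the holomorphic and vanishing versions demand no separate argument, because each transfer step above manifestly preserves ``no negative Fourier coefficient'' and ``only strictly positive Fourier coefficients.''

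The main obstacle is this last point — the denominator-$2e$ cusps when $N_1\equiv 3\imod 4$ — which the formal operator identities do not reach and which seems to genuinely require the $\delta$-condition at $2$. If the direct local computation there turns out to be awkward, the fallback I would use is to pass to the vector-valued side, where $\mathrm{SL}_2(\mathbb Z)$ has a single cusp: set $F=\psi(f)$ via the usual formula (legitimate once $f$ is merely meromorphic at $\infty$, as holomorphy on $\mathbb H$ of each slash of $f$ is automatic), observe that in the coset sum for $F_\gamma$ only the terms $f|W(N)|M$ with $M\infty$ of saturated denominator survive — the others being killed by the vanishing $0\notin D^{c*}$ whenever the $2$-part of $c$ equals $2$, exactly as in Proposition \ref{Injection} — deduce $F\in\mathcal A^\text{inv}(k,\rho_D)$, and recover $f$ as a nonzero constant times $\phi(F)\in A^\delta(N,k,\chi_D)$.
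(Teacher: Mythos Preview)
Your treatment of the square-free case $N_1\equiv 1\pmod 4$, and of the cusps of denominator $e$ or $4e$ when $N_1\equiv 3\pmod 4$, is essentially the paper's argument: use (a meromorphy-free version of) Corollary~\ref{Fourier-Epsilon} to write $f$ at $\gamma_m\infty$ as a constant times $f|U(N_m)$ rescaled, and read off the behaviour at $\infty$. That part is fine.

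The gap is at the cusps of denominator $2m_1$ for $N_1\equiv 3\pmod 4$. Neither of your two proposals closes it. Proposal~(a), the ``bookkeeping of locally representable norms'', does not apply here: that argument in the paper controls which Fourier coefficients of $F_0|W(N)$ can be nonzero for a \emph{vector-valued} form $F$, by asking which residues $n/N$ are values of $q$ on $D$. A scalar form $f$ that merely satisfies the $\delta$-condition at $\infty$ carries no such constraint on its expansion at other cusps, so there is nothing to ``bookkeep''. Proposal~(b), lifting via $\psi$, fails because the vanishing you invoke is only $0\notin D^{c*}$ when $2\parallel c$. By Scheithauer's formula this makes $(\rho_D(M^{-1})e_0,e_0)=0$, but $D^{c*}$ is a nonempty coset, so for every $\gamma\in D^{c*}$ one has $(\rho_D(M^{-1})e_0,e_\gamma)\neq 0$. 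Hence the terms $f|W(N)|M$ with $M\infty$ of denominator $2m_1$ genuinely enter $\psi(f)_\gamma$ for such $\gamma$, and you cannot conclude that $\psi(f)$ is meromorphic at $\infty$ without already knowing $f$ is meromorphic at those cusps --- which is exactly what you are trying to prove.

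The paper does not sidestep this; it computes directly. One introduces an explicit matrix
\[
\alpha_{2m_1}=\eta_{2m_1}^{-1}\begin{pmatrix}1&-1/2\\0&1\end{pmatrix}\eta_{2m_1}\gamma_{N_1/m_1}\in\text{SL}_2(\mathbb Z),\qquad \alpha_{2m_1}\infty\sim\frac{1}{2m_1},
\]
and, using the identity $f=(*)\,f|U(4m_1)\eta_{2m_1}$ from Corollary~\ref{Fourier-Epsilon} together with the elementary relation $g|\bigl(\begin{smallmatrix}1&-1/2\\0&1\end{smallmatrix}\bigr)=-g+2(g|U(2))(2\tau)$ for $g=f|U(4m_1)$, expresses $f|\alpha_{2m_1}$ as a sum of two pieces: a constant times $f|\gamma_{N_1/m_1}$ (already controlled) and a constant times $f|U(8)\eta_2\,U(N_1/m_1)|\bigl(\begin{smallmatrix}1&0\\0&2N_1/m_1\end{smallmatrix}\bigr)$. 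A separate short computation shows $f|U(8)\eta_2$ is a linear combination of a series built from the coefficients $a(2n)$ and a constant times $f|U(4)\eta_2|\bigl(\begin{smallmatrix}2&0\\0&1\end{smallmatrix}\bigr)$, both visibly inheriting the behaviour at $\infty$. This is the missing idea: an explicit operator decomposition at the half-level cusps, not a structural shortcut through $\psi$.
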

\begin{proof} We note that in the following proof, we write $f=(*)g$ if $f=cg$ for some nonzero constant $c\in\mathbb C$ and at different places the constants are different in general.

Consider first the case $N_1\equiv 1\imod 4$. Given any positive divisor $m_1$ of $N_1$, we have $\gamma_{m_1}\infty\sim\frac{1}{N_1/m_1}$.
Since $\gamma_{m_1}\infty$ exhausts all cusps, we just need to show that $f|\gamma_{m_1}$ has the same (or stronger) holomorphy at $\infty$ as  $f$ does. Actually, by a version of Corollary \ref{Fourier-Epsilon} without the meromorphy condition at the cusps, we see that
\[f|\gamma_{m_1}=(*)f\left|U(m_1)\begin{pmatrix}1&0\\0&m_1\end{pmatrix}\right..\]
This finishes the case $N_1\equiv 1\imod 4$.

Now we consider the case $N_1\equiv 3\imod 4$. Let $m_1$ be any positive divisor of $N_1$. We know that $\gamma_{m_1}\infty\sim \frac{1}{4N_1/m_1}$ and $\gamma_{2m_1}\infty\sim \frac{1}{N_1/m_1}$, so the holomorphy at these cusps follows from the same argument as in the case of $N_1\equiv 1\imod 4$. We still need to consider the cusps $s\sim \frac{1}{2m_1}$. Since it involves many computations, here we only sketch the  idea.

Let $f\in A(N,k,\chi_D)$. Then it can be shown that \[f\left|\eta_{2m_1}\begin{pmatrix}1 &0\\0 &2\end{pmatrix}\right.=\chi_{N_1/m_1}(2)f\left|\begin{pmatrix}2 &0\\0 &1\end{pmatrix}\eta_{2m_1}\right..\] Moreover, if $f=\sum_na(n)q^n$, then by a similar argument as in the proof of Lemma \ref{Decomposition-Lem},
\[f|U(8)\eta_2=-i2^{k-1}\sum_n\chi_2(n)a(2n)q^{n}+2^{\frac{k}{2}-1}\chi_{N_1}(2)f\left|U(4)\eta_2\begin{pmatrix}2 &0\\0 &1\end{pmatrix}\right..\] Let
\[\alpha_{2m_1}=\eta_{2m_1}^{-1}\begin{pmatrix}1 &-1/2\\0 &1\end{pmatrix}\eta_{2m_1}\gamma_{N_1/m_1},\]
then it can be seen easily that $\alpha_{2m_1}\in\text{SL}_2(\mathbb Z)$ and $\alpha_{2m_1}\infty\sim \frac{1}{2m_1}$.

Now let $f\in A^\delta(N,k,\chi_D)$, so we need to consider $f|\alpha_{2m_1}$. Since $f=(*)f|U(4m_1)\eta_{2m_1}$, we have
\[f|\alpha_{2m_1}=(*)f|U(4m_1)\eta_{2m_1}\alpha_{2m_1}=(*)f\left|U(4m_1)\begin{pmatrix}1 &-1/2\\0 &1\end{pmatrix}\eta_{2m_1}\gamma_{N_1/m_1}\right..\] Let $g=f|U(4m_1)$, and it is easy to show that
\[g\left|\begin{pmatrix}1 &-1/2\\0 &1\end{pmatrix}\right.=-g(\tau)+2(g|U(2))(2\tau).\] Therefore,
\begin{eqnarray*}f|\alpha_{2m_1}&=&(*)f|U(4m_1)\eta_{2m_1}\gamma_{N_1/m_1}+(*)f\left|U(4m_1)U(2)\begin{pmatrix}2 &0\\0 &1\end{pmatrix}\eta_{2m_1}\gamma_{N_1/m_1}\right.\\
&=& (*)f|\gamma_{N_1/m_1}+(*)f\left|U(8m_1)\begin{pmatrix}2 &0\\0 &1\end{pmatrix}\eta_{2N_1}\begin{pmatrix}1 &0\\0 &N_1/m_1\end{pmatrix}\right.\\
&=& (*)f|\gamma_{N_1/m_1}+(*)f\left|U(8)\eta_2U(N_1/m_1)\begin{pmatrix}1 &0\\0 &2N_1/m_1\end{pmatrix}\right.,
\end{eqnarray*}
and the holomorphy at the cusp $\frac{1}{2m_1}$ follows from above calculations.
\end{proof}

With little effort, the above argument proves more.
\begin{Cor}\label{Holomorphy-Special} Let $N_1\equiv 1,3\imod 4$.
Assume $f\in A(N,k,\chi_D)$ and its Fourier expansion at $\infty$ contains only negative power terms of the form $a(-m)q^{-m}$ with $(m,N)=1$. If $f\in A^\delta(N,k,\chi_D)$, then $f$ is holomorphic at all other cusps.
\end{Cor}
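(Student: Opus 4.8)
The plan is to rerun the cusp-by-cusp analysis from the proof of Corollary~\ref{Holomorphy}, now using the hypothesis on the principal part of $f$ to see that every operator occurring there carries $f$ to a function holomorphic at $\infty$. Write $f=\sum_n a(n)q^n$; by hypothesis $a(n)=0$ for $n<0$ unless $n=-m$ with $(m,N)=1$. As in that proof, every cusp is of the form $\gamma_m\infty$ ($m\mid N$), together with the $\alpha_{2m_1}\infty$ ($m_1\mid N_1$) when $N_1\equiv 3\imod 4$; so it suffices to show $f|\gamma_m$ is holomorphic at $\infty$ whenever $\gamma_m\infty\not\sim\infty$, and that $f|\alpha_{2m_1}$ is holomorphic at $\infty$ for each $m_1\mid N_1$. (We keep writing $(*)$ for an unspecified nonzero constant.)

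The elementary input is a divisibility remark: if $M>1$ and every prime dividing $M$ divides $N$, then $f|U(M)$ is holomorphic at $\infty$, because a term $q^{n/M}$ with $n<0$ in $\sum_{M\mid n}a(n)q^{n/M}$ would force $M\mid (-n)=m$ with $(m,N)=1$, which is absurd. The same reasoning shows the twisted operator $\sum_n a(n)q^n\mapsto\sum_n\chi_2(n)a(2n)q^n$, which appears in the analysis of the cusps $\frac{1}{2m_1}$ when $N_1\equiv 3\imod 4$, carries $f$ to a function holomorphic at $\infty$: $2n$ is even while every pole exponent $-m$ is odd, so $a(2n)=0$ for $n<0$. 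Finally, composing (on either side) with an upper triangular matrix $\begin{pmatrix}a&b\\0&d\end{pmatrix}$ merely rescales the local parameter and so preserves holomorphy at $\infty$. Now for $\gamma_m\infty\not\sim\infty$ one has $N_m>1$, and the proof of Corollary~\ref{Holomorphy} expresses $f|\gamma_m$ as $(*)f|U(N_m)\begin{pmatrix}1&0\\0&N_m\end{pmatrix}$, which is then holomorphic at $\infty$; and for a \emph{proper} divisor $m_1$ of $N_1$ that proof writes $f|\alpha_{2m_1}=(*)f|\gamma_{N_1/m_1}+(*)\,(\text{a combination of terms built from the }U(M)\text{'s and the twisted operator})$ with $N_1/m_1>1$ (here one also uses $f|U(4)\eta_2=(*)f$, the $p=2$ case of Corollary~\ref{Fourier-Epsilon}, to recognize the remaining pieces), so every summand is holomorphic at $\infty$ and $f$ is holomorphic at $\frac{1}{2m_1}$. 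This handles everything except the cusp $\alpha_{2N_1}\infty\sim\frac{1}{2N_1}$ in the case $N_1\equiv 3\imod 4$.

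That last cusp is the main obstacle. When $m_1=N_1$ we have $N_1/m_1=1$ and $\gamma_1$ may be taken to be the identity (Lemma~\ref{Eta-operator}(1)), so the ``leading term'' $(*)f|\gamma_{N_1/m_1}$ is literally $(*)f$, which has a pole at $\infty$. The resolution is that a second copy of $f$ is hidden in the remaining term and cancels it: carrying the chain of identities in the proof of Corollary~\ref{Holomorphy} through with $m_1=N_1$ gives $f|\alpha_{2N_1}=c_1 f+c_2\left(-i2^{k-1}\sum_n\chi_2(n)a(2n)q^n+2^{\frac{k}{2}-1}\chi_{N_1}(2)\,f|U(4)\eta_2\right)|\begin{pmatrix}1&0\\0&2\end{pmatrix}$ with $c_1,c_2$ explicit nonzero constants, while the $p=2$ case of Corollary~\ref{Fourier-Epsilon} gives $f|U(4)\eta_2=(\delta_2 C_2)^{-1}f$. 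Hence $f|\alpha_{2N_1}$ is a scalar multiple of $f$ plus $(*)\sum_n\chi_2(n)a(2n)q^{n/2}$, and the second summand is harmless by the previous paragraph. The whole corollary thus reduces to the single assertion that the coefficient of $f$ on the right-hand side vanishes, i.e.\ $c_1=-c_2\cdot 2^{\frac{k}{2}-1}\chi_{N_1}(2)(\delta_2 C_2)^{-1}$. I expect verifying this to be the only real work: one pins down $c_1$ (from $f=(*)f|U(N)W(N)$ and the decomposition $g|\begin{pmatrix}1&-1/2\\0&1\end{pmatrix}=-g+2(g|U(2))(2\tau)$, extracting the $-g$ contribution) and $c_2$ (a product of the normalizations $C_p=N_p^{-\frac{k-2}{2}}W(\chi_p)^{-1}\chi_p(-1)$) as explicit monomials in $W(\chi_2)$, $W(\chi_{N_1})$, powers of $2$, $i$ and signs, and then checks that the two contributions to the coefficient of $f$ are negatives of one another — exactly the ``many computations'' that the sketch in Corollary~\ref{Holomorphy} alluded to. Once this is done the case $N_1\equiv 1\imod 4$ needs nothing beyond the first two paragraphs, since there every cusp other than $\infty$ has the form $\gamma_m\infty$ with $N_m>1$.
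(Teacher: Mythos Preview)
Your approach is correct and essentially identical to the paper's: both rerun the cusp-by-cusp analysis of Corollary~\ref{Holomorphy}, observe that every $U(m')$ with $1<m'\mid N$ annihilates the principal part under the coprimality hypothesis, and isolate the cusp $\frac{1}{2N_1}$ (in the case $N_1\equiv 3\imod 4$) as the one place requiring the two scalar multiples of $f$ to cancel. The paper's proof is in fact terser than yours---it simply asserts that one ``just has to keep track of the constant scalars and prove that the two terms in $f|\alpha_{2N_1}$ that are scalar multiples of $f$ actually cancel out''---so your identification of exactly which two terms these are and what equality of constants must be checked is a genuine elaboration, though, like the paper, you stop short of carrying out that final bookkeeping.
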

\begin{proof}
Note that applying the operator $U(m')$ with $m'\mid N$ only collects the Fourier coefficients of $f$ at $\infty$ that are multiples of $m'$. From the condition on the negative power terms of $f$ at $\infty$ and the computations in Corollary \ref{Holomorphy}, the only thing we need to take care of is the cusp $\frac{1}{2N_1}$; that is, $N_1\equiv 3\imod 4$ and $m_1=N_1$. To this end, we just have to keep track of the constant scalars and prove that the two terms in $f|\alpha_{2N_1}$ that are scalar multiples of $f$ actually cancel out.
\end{proof}

\bigskip

We denote again by $\psi$ its restriction to the subspace $A^\epsilon(N,k,\chi_D)$. We are ready to establish the one-to-one correspondence between the space of invariant vector-valued modular forms and that of scalar-valued modular forms with $\epsilon$-condition.

\begin{Prop}\label{Surjection}
We have $\phi\circ\psi=id$.
\end{Prop}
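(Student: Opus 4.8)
The plan is to unwind the two maps, reduce the claim to one scalar identity, and then prove that identity cusp by cusp in the style of the proof of Proposition \ref{Injection}, with the role of Lemma \ref{T-invariance} taken over by Corollary \ref{Fourier-Epsilon}. For $f\in A^\epsilon(N,k,\chi_D)$ one has
\[(\psi(f))_0=N^{\frac{k-1}{2}}\sum_{M\in\Gamma_0(N)\backslash SL_2(\mathbb Z)}(f|W(N)|M)(\rho_D(M^{-1})e_0,e_0),\]
hence $\phi(\psi(f))=2^{-\omega(N)}\sum_M(f|W(N)|M|W(N))(\rho_D(M^{-1})e_0,e_0)$. Since $W(N)$ is an involution on $A(N,k,\chi_D)$, the identity $\phi(\psi(f))=f$ is equivalent to
\[\sum_{M\in\Gamma_0(N)\backslash SL_2(\mathbb Z)}(g|M)(\rho_D(M^{-1})e_0,e_0)=2^{\omega(N)}g,\qquad g:=f|W(N).\]
This is formally the identity established in the proof of Proposition \ref{Injection} with $F_0$ replaced by $g$; the essential difference is that $g$ is now only known to satisfy, through $f$, the $\epsilon$-condition, rather than to be the zero-component of a vector-valued modular form.

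Next I would group the coset sum according to the cusp $s=M\infty$. Because $\rho_D(T)e_0=e_0$, the coefficient $(\rho_D((M_sT^j)^{-1})e_0,e_0)$ does not depend on $j$, so with $M_s$ a fixed representative and $w_s$ the width of $s$ for $\Gamma_0(N)$, the contribution of $s$ equals $\overline{(\rho_D(M_s)e_0,e_0)}\sum_{j=0}^{w_s-1}g|M_sT^j$, which is $w_s$ times the part of $g|M_s$ supported on integral powers of $q$. By Scheithauer's formula for $F_0|M$ quoted just before Proposition \ref{Injection}, $(\rho_D(M_s)e_0,e_0)=0$ unless $0\in D^{c*}$; exactly as in that proof this kills the cusps $\frac1{2m_1}$ when $N_1\equiv2,3\imod4$ and leaves precisely $2^{\omega(N)}$ active cusps — the cusps $\frac1{m_1}$ and $\frac1{4m_1}$ with $m_1\mid N_1$ in the case $N_1\equiv3\imod4$, with $\infty$ among them, and analogous lists in the other two cases. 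It therefore suffices to show that each active cusp contributes exactly $g$.

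For $s=\infty$, i.e.\ the coset of $\Gamma_0(N)$ itself (representative $I$, width $1$), the contribution is visibly $g$. For the remaining active cusps I would choose $M_s$ whose lower-left entry is a divisor $m$ of $N$, write $g|M_s=f|W(N)M_s=f|\eta_NM_s$ using $W(N)=\eta_N$ (Lemma \ref{Eta-operator}(2)), and then use the $\eta_m$- and $U(N_m)$-operator identities of Lemma \ref{Eta-operator}, together with the commutation relations underlying Lemmas \ref{Decomposition-Lem} and \ref{Commute}, to rewrite the integral-power part of $g|M_s$, up to an explicit constant, in terms of $f$, the operators $U(N_m)$, $\eta_m$ and $W(N)$, for the divisor $m$ attached to $s$. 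Applying Corollary \ref{Fourier-Epsilon}, namely the relations $f=\epsilon_pC_pf|U(N_p)\eta_p$ for $p\mid N$ that encode the $\epsilon$-condition, successively over the primes dividing $m$ collapses this expression back to $g=f|W(N)$, provided the accumulated scalars multiply out to $1$. Summing the $2^{\omega(N)}$ active contributions then yields $2^{\omega(N)}g$.

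The step I expect to be the main obstacle is precisely this last accounting of constants: at each active cusp one must identify the Weil-representation scalar $\overline{(\rho_D(M_s)e_0,e_0)}$ produced by Scheithauer's formula and verify that, together with the width $w_s$, the constants $C_p=N_p^{-\frac{k-2}{2}}W(\chi_p)^{-1}\chi_p(-1)$ and the signs $\epsilon_p$, it multiplies to $1$ — using $W(\chi_p)=\varepsilon_pN_p^{\frac12}$ and the explicit values of $\varepsilon_p$ and $\epsilon_p$, and in particular treating the prime $2$ separately according to the three cases $N_1\equiv1,2,3\imod4$ of Lemma \ref{Discriminant-Form} (the case $N_1\equiv2\imod4$, where $q_2$ has two Jordan constituents, being the most involved). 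A logically equivalent but apparently no easier route would be to prove instead that $\psi$ is injective on $A^\epsilon(N,k,\chi_D)$: since $\psi\circ\phi=id$ gives $\psi(\phi(\psi(f))-f)=0$ with $\phi(\psi(f))-f\in A^\epsilon(N,k,\chi_D)$, that injectivity would finish the proof at once, but establishing it reduces once more to the same cusp-by-cusp analysis.
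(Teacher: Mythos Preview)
Your plan is correct and follows essentially the same route as the paper's own proof: group the coset sum by cusp, use Scheithauer's formula to discard the cusps with $0\notin D^{c*}$ (those of type $\frac{1}{2m_1}$, and additionally $\frac{1}{4m_1}$ when $N_1\equiv 2\imod 4$), and for each of the remaining $2^{\omega(N)}$ cusps verify, via Lemma \ref{Eta-operator} and the $\epsilon$-condition of Corollary \ref{Fourier-Epsilon}, that the contribution equals one copy of the desired function. The one organizational difference is that the paper keeps the outer $W(N)$ in place rather than stripping it off first; this lets the cusp sum be rewritten directly as $m_1^{-\frac{k}{2}+1}f|W(N)\eta_{m_1}U(m_1)W(N)\cdot(\rho_D(\gamma_{m_1}^{-1})e_0,e_0)$, so that the scalar bookkeeping you flag as the main obstacle lands squarely on $f|U(m_1)\eta_{m_1}$ --- exactly the shape in which Corollary \ref{Fourier-Epsilon} applies prime by prime --- and the remaining product of signs $\chi_{m_1}(-1)\chi_{m_1}'(-1)\chi_{m_1}(N/m_1)\prod_{p\mid m_1}\chi_p(N/p)\prod_{p\neq q}\chi_p(q)$ collapses to $1$.
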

\begin{proof}
Let $f\in A^\epsilon(N,k,\chi_D)$. Define for each cusp $s$,
\[F_s=\sum_{M\in\Gamma_0(N)\backslash SL_2(\mathbb Z), M\infty=s}(f|W(N)|M)\rho_D(M^{-1})e_0.\] What we have to prove is that
\[\sum_{s}(F_s,e_0)|W(N)=2^{\omega(N)}f.\]

We first deal with the case $N_1=N\equiv 1\imod 4$.
For a positive divisor $m_1$ of $N$, consider the cusp $s\sim \frac{1}{N/m_1}$. We have
\begin{align*}
(F_s,e_0)|W(N)&= \sum_{j\imod m_1}f|W(N)\gamma_{m_1}T^jW(N)\left(\rho_D(\gamma_{m_1}^{-1})e_0,e_0\right)\\
&= m_{1}^{-\frac{k}{2}+1}f|W(N)\eta_{m_1}U(m_1)W(N)\left(\rho_D(\gamma_{m_1}^{-1})e_0,e_0\right)\\
&= \left(m_{1}^{-\frac{k}{2}+1}\chi_{m_1}(N/m_1)f|_kU(m_{1})\eta_{m_1}\right)\left(\chi_{m_1}'(-1)m_1^{-\frac{1}{2}}\prod_{p\mid m_1}(\chi_p(N/p)\varepsilon_p)\right)\\
&= \left(\chi_{m_1}(-1)\chi_{m_1}'(-1)\chi_{m_1}(N/m_1)\prod_{p\mid m_1}\chi_p(N/p))\left(\prod_{p,q\mid m_1,p\neq q}\chi_p(q)\right)\right)f=f,
\end{align*}
where we applied Lemma \ref{Eta-operator} and the $\epsilon$-condition of $f$ to $f|_kU(m)\eta_m$, and used Theorem 4.7 in \cite{scheithauer2009weil} for the concrete computation of $\left(\rho_D(\gamma_m^{-1})e_0,e_0\right)$. Since there are in total $2^{\omega(N)}$ positive divisors of $N$ and they correspond bijectively to non-equivalent cusps, we are done with the proof in the case of $N_1\equiv 1\imod 4$.

Now assume $N_1\equiv 3\imod 4$ and $N=4N_1$. We know that non-equivalent cusps correspond bijectively to positive divisors of $N$. Let $m_1$ be any positive divisor of $N_1$. For the cusp $s\sim \frac{1}{N/2m_1}$, let $c$ be the left lower entry of $\gamma_{2m_1}^{-1}$, hence $2||c$. Since the 2-adic Jordan component of $D$ is odd, we have $0\not\in D^{c*}$ and $\left(\rho_D(\gamma_{2m_1}^{-1})e_0,e_0\right)=0$. The other cusps, in total $2^{\omega(N)}$ of them, all give $f$ from $(F_s,e_0)|W(N)$ by the same computations.

Similarly, if $N_1\equiv 2\imod 4$ and $N=4N_1$, the cusps $s\sim \frac{1}{N/2m_1}$ and $s\sim\frac{1}{N/4m_1}$ for any $m_1\left| \frac{N_1}{2}\right.$ contribute nothing, since in these cases $0\not\in D^{c*}$ with $c$ the left lower entry of $\gamma_{2m_1}$ or $\gamma_{4m_1}$. The computation for other cusps, in total $2^{\omega(N)}$ of them, is similar, and each such cusp gives us one copy of $f$. This completes the proof.
\end{proof}

Define for each $m\imod N$, $s(m)=2^{\omega((m,N))}$.

\begin{Thm}\label{Correspondence}
The maps $\phi$ and $\psi$ are isomorphisms, inverse to each other, between $\mathcal A^\text{inv}(k,\rho_D)$ and $A^\epsilon(N,k,\chi_D)$. Explicitly, if $f=\sum_na(n)q^n\in A^\epsilon(N,k,\chi_D)$ and $\psi(f)=F=\sum_\gamma F_\gamma e_\gamma$, then
\[F_\gamma(\tau)=s(Nq(\gamma))\sum_{n\equiv Nq(\gamma)\imod N\mathbb Z}a(n)e\left(n\tau/N\right)=\sum_{n\equiv Nq(\gamma)\imod N\mathbb Z}s(n)a(n)e\left(n\tau/N\right).\]
Moreover, the same statement holds for $\mathcal A^\text{inv}(k,\rho_D^*)$ and $A^{\epsilon^*}(N,k,\chi_D)$.
\end{Thm}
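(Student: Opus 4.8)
The plan is to assemble Theorem~\ref{Correspondence} from the pieces already in place. By Proposition~\ref{Injection} we have $\psi\circ\phi=\mathrm{id}$ on $\mathcal A^\text{inv}(k,\rho_D)$, and by Proposition~\ref{Surjection} we have $\phi\circ\psi=\mathrm{id}$ on $A^\epsilon(N,k,\chi_D)$, once we know $\psi$ actually lands in $\mathcal A^\text{inv}(k,\rho_D)$ when restricted to $A^\epsilon$ and $\phi$ lands in $A^\epsilon$. The latter containment is the lemma $\phi(\mathcal A^\text{inv}(k,\rho_D))\subset A^\epsilon(N,k,\chi_D)$ already proved; so the one remaining structural point is that $\psi(A^\epsilon(N,k,\chi_D))\subseteq\mathcal A^\text{inv}(k,\rho_D)$ rather than merely $\mathcal A(k,\rho_D)$. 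For this I would compute $\phi(\psi(f))=f$ and $\psi(\phi(\psi(f)))=\psi(f)$; since $\psi$ is defined on all of $A(N,k,\chi_D)$ and $\psi\circ\phi=\mathrm{id}$ holds on the invariant space, one gets that $\psi(f)$ is fixed by $\psi\circ\phi$, and the well-definedness lemma for $\psi$ together with the explicit Fourier formula (derived below) will show the image is $\text{Aut}(D)$-invariant directly, because the coefficient of $e_\gamma$ depends only on $q(\gamma)$. Combining these, $\phi$ and $\psi$ are mutually inverse bijections between $\mathcal A^\text{inv}(k,\rho_D)$ and $A^\epsilon(N,k,\chi_D)$, and being linear they are isomorphisms.

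Next I would establish the explicit formula for $F_\gamma$ where $F=\psi(f)$. Starting from the definition
\[\psi(f)=N^{\frac{k-1}{2}}\sum_{M\in\Gamma_0(N)\backslash SL_2(\mathbb Z)}(f|W(N)|M)\rho_D(M^{-1})e_0,\]
I would take the $e_\gamma$-component, i.e.\ pair against $e_\gamma$, and reorganize the sum over cosets. The key computational input is the formula for $\phi\circ\psi=\mathrm{id}$ from Proposition~\ref{Surjection}: there we already saw that only the $2^{\omega(N)}$ cusps of the form $\frac{1}{N/m_1}$ (resp.\ $\frac{1}{N/4m_1}$) contribute, and each contributes $f|W(N)$ back after the $\eta_m$-$U(m)$ manipulations using Theorem~4.7 in \cite{scheithauer2009weil}. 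To get $F_\gamma$ rather than $F_0$, I would instead extract $(\psi(f),e_\gamma)$; by the same coset decomposition and Theorem~4.7 of \cite{scheithauer2009weil} for $(\rho_D(M^{-1})e_0,e_\gamma)$, the exponential sums collapse and one is left with the sum over $n\equiv Nq(\gamma)\imod{N}$ of $a(n)q^{n/N}$, weighted by a multiplicity factor. That multiplicity counts how many $M\infty$ give the same contribution, and it works out to $s(Nq(\gamma))=2^{\omega((Nq(\gamma),N))}$; the second equality in the displayed formula, $s(Nq(\gamma))a(n)=s(n)a(n)$ for $n\equiv Nq(\gamma)\imod{N}$, holds because $s$ depends only on $(n,N)$ and $\epsilon$-condition forces $a(n)=0$ whenever $(n,N)$ fails to be compatible with $q(\gamma)$, so on the support of $a$ we have $(n,N)=(Nq(\gamma),N)$.

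For the last sentence, the statement for $\mathcal A^\text{inv}(k,\rho_D^*)$ versus $A^{\epsilon^*}(N,k,\chi_D)$, I would observe that every construction and every lemma in Sections~2 and~3 goes through verbatim with $\rho_D$ replaced by the dual Weil representation $\rho_D^*$; the only change is that the relevant local signs at each prime flip from $\epsilon_p=\chi_p(-1)$ (resp.\ the explicit $\epsilon_2$ values) to $\epsilon_p^*=1$. Concretely, the maps $\phi^*$ and $\psi^*$ are defined by the same formulas with $\rho_D$ replaced by $\rho_D^*$, and the analogue of the lemma $\phi(\mathcal A^\text{inv})\subseteq A^\epsilon$ becomes $\phi^*(\mathcal A^\text{inv}(k,\rho_D^*))\subseteq A^{\epsilon^*}(N,k,\chi_D)$ because in the $W(N)$-expansion the surviving norms at each prime are exactly those with $\chi_p(n)=\epsilon_p^*=1$; this is the same case-by-case check with $\rho_D$ replaced by its complex conjugate, which conjugates the Weil representation data and hence sends $\epsilon$ to $\epsilon^*$. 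The propositions $\psi^*\circ\phi^*=\mathrm{id}$ and $\phi^*\circ\psi^*=\mathrm{id}$ then follow from the identical cusp-by-cusp computations. The main obstacle I anticipate is bookkeeping: carefully tracking the nonzero constants $(*)$ through the $\eta_m$, $U(m)$, and $W(N)$ operators and through Scheithauer's Theorem~4.7 so that the product of all the local $\varepsilon_p$, $\chi_p$ factors genuinely telescopes to $1$ (as it did in Proposition~\ref{Surjection}) and so that the multiplicity factor comes out as precisely $s(Nq(\gamma))$ rather than an off-by-a-power-of-two quantity; everything else is a direct assembly of results already proven.
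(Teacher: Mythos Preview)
Your assembly of the isomorphism from Propositions~\ref{Injection} and~\ref{Surjection} is correct and matches the paper. Note, however, that your concern about $\psi$ landing in the invariant space is unnecessary: since any $\sigma\in\text{Aut}(D)$ fixes $e_0$ and commutes with $\rho_D(M^{-1})$, the sum defining $\psi(f)$ is automatically $\text{Aut}(D)$-invariant; this is already implicit in the well-definedness lemma, so no extra argument is needed.

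Where you diverge from the paper is in deriving the explicit formula for $F_\gamma$. You propose to extract $(\psi(f),e_\gamma)$ directly, running the cusp-by-cusp computation of Proposition~\ref{Surjection} with $e_\gamma$ in place of $e_0$ and tracking constants through Scheithauer's Theorem~4.7. The paper avoids this entirely: it instead writes
\[
F_0|W(N)=N^{\frac{k-1}{2}}\sum_{n}\left(\sum_{\gamma:\,q(\gamma)=n/N}a(\gamma,n/N)\right)q^n,
\]
invokes the already-proved identity $\phi\circ\psi=\mathrm{id}$ to deduce $\sum_{\gamma:\,q(\gamma)=n/N}a(\gamma,n/N)=2^{\omega(N)}a(n)$, and then uses Proposition~\ref{Invariance} to conclude that all summands are equal. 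Counting the elements of a given norm yields $2^{\omega(N/(N_n,N))}$ copies, so $a(\gamma,n/N)=2^{\omega((n,N))}a(n)$. This route bypasses exactly the bookkeeping you flag as the main obstacle: no local constants, no telescoping of $\varepsilon_p$ and $\chi_p$ factors, just a division. Your approach would likely succeed but is substantially harder; the paper's trick of reading the formula off from $\phi\circ\psi=\mathrm{id}$ and the orbit count is worth noting.

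Finally, your justification of the second equality $s(Nq(\gamma))a(n)=s(n)a(n)$ is slightly off: it follows simply because $n\equiv Nq(\gamma)\pmod{N}$ forces $p\mid n\iff p\mid Nq(\gamma)$ for each $p\mid N$, so $s(n)=s(Nq(\gamma))$ on the nose, without appealing to the $\epsilon$-condition.
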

\begin{proof}
That $\phi$ and $\psi$ are inverse isomorphisms follows from Proposition \ref{Injection} and Proposition \ref{Surjection}. For the explicit correspondence, let $f=\sum_na(n)q^n\in A^\epsilon(N,k,\chi_D)$, $\psi(f)=F=\sum_\gamma F_\gamma e_\gamma$, and $F_\gamma(\tau)=\sum_{n\in q(\gamma)+\mathbb Z}a(\gamma,n)q^n$ for $\gamma\in D$. We have
\[F_0|W(N)=F_0|S|V(N)=\frac{1}{\sqrt{N}}\sum_{\gamma\in D}F_\gamma|V(N)=N^{\frac{k-1}{2}}\sum_{n\in\mathbb Z}\left(\sum_{\gamma\in D: \frac{n}{N}= q(\gamma)}a\left(\gamma, nN^{-1}\right)\right)q^n.\]
Since $\phi\circ\psi=id$, we have for any $n\in\mathbb Z$,
\[\sum_{\gamma\in D: \frac{n}{N}= q(\gamma)}a\left(\gamma, nN^{-1}\right)=2^{\omega(N)}a(n).\]

Let $\gamma\in D$ with $q(\gamma)=\frac{n}{N}$. On one hand, by Proposition \ref{Invariance}, if $q(\gamma')=q(\gamma)$ then $F_\gamma=F_{\gamma'}$ and hence $a(\gamma,nN^{-1})=a(\gamma',nN^{-1})$. On the other hand,
$\gamma'$ has the same norm as $\gamma$, if and only if $\gamma$ coincides with $\gamma'$ at each $D_p$ up to permutation of $\sigma_p$, so there are $2^{\omega(N/(N_n,N))}$ elements that have the same norm as $\gamma$. Indeed, $\gamma'$ has two possibilities in $D_p$ if and only if $p\nmid n$. Then above identity implies that
\[2^{\omega(N/(N_n,N))}a(\gamma,nN^{-1})=2^{\omega(N)}a(n),\]
hence $a(\gamma,nN^{-1})=2^{\omega((n,N))}a(n)$, and the theorem follows.

The corresponding results for the spaces $\mathcal A^\text{inv}(k,\rho_D^*)$ and $A^{\epsilon^*}(N,k,\chi_D)$ follow from analogous treatment.
\end{proof}

\section{Obstructions and Rationality of Fourier Coefficients}
\noindent
In this section, we translate Borcherds's theorem of obstructions to scalar-valued modular forms using the one-to-one correspondence in the previous section.

Assume for a while that $(D,q)$ is a general discriminant form and $\rho_D$ the corresponding Weil representation of $\text{SL}_2(\mathbb Z)$ on $\mathbb C[D]$.

\begin{Lem} \label{Action}
If $F\in\mathcal A(k,\rho_D)$ (or $\mathcal M(k,\rho_D)$, respectively) and $\sigma\in\text{Aut}(D)$, then $\sigma(F)\in\mathcal A(k,\rho_D)$ (or $\mathcal M(k,\rho_D)$, respectively).
\end{Lem}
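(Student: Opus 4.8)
The plan is to show that the automorphism group $\text{Aut}(D)$ acts on $\mathbb C[D]$ in a way that commutes with the Weil representation $\rho_D$, so that applying $\sigma$ to a vector-valued form preserves the transformation law, the holomorphy, and the growth conditions. First I would make precise the action: for $\sigma\in\text{Aut}(D)$, extend $\sigma$ linearly to $\mathbb C[D]$ by $\sigma(e_\gamma)=e_{\sigma\gamma}$, and for $F=\sum_\gamma F_\gamma e_\gamma$ set $\sigma(F)=\sum_\gamma F_\gamma e_{\sigma\gamma}=\sum_\gamma F_{\sigma^{-1}\gamma}e_\gamma$. The componentwise holomorphy on the upper half plane and the condition that each $F_\gamma=\sum_{n\in q(\gamma)+\mathbb Z}a(\gamma,n)q^n$ has only finitely many negative terms are clearly inherited by $\sigma(F)$, since $\sigma$ is a bijection of $D$ preserving $q$ modulo $1$ (so the exponent set $q(\sigma^{-1}\gamma)+\mathbb Z=q(\gamma)+\mathbb Z$ is unchanged); the holomorphic case $\mathcal M(k,\rho_D)$ is handled identically by noting nonnegativity of exponents is preserved.

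The substantive point is the equivariance $\sigma\circ\rho_D(M)=\rho_D(M)\circ\sigma$ for all $M\in\text{SL}_2(\mathbb Z)$. Since $T$ and $S$ generate $\text{SL}_2(\mathbb Z)$, it suffices to check this on the two generators. For $T$: $\rho_D(T)e_\gamma=e(q(\gamma))e_\gamma$, and because $q(\sigma\gamma)=q(\gamma)$ one gets $\sigma\rho_D(T)e_\gamma=e(q(\gamma))e_{\sigma\gamma}=e(q(\sigma\gamma))e_{\sigma\gamma}=\rho_D(T)\sigma e_\gamma$. For $S$: $\rho_D(S)e_\gamma=\frac{1}{\sqrt N}\sum_{\delta\in D}e(-(\gamma,\delta))e_\delta$, and applying $\sigma$ and reindexing $\delta\mapsto\sigma\delta$ gives $\frac{1}{\sqrt N}\sum_\delta e(-(\gamma,\delta))e_{\sigma\delta}=\frac{1}{\sqrt N}\sum_\delta e(-(\gamma,\sigma^{-1}\delta))e_\delta$; using that $\sigma$ preserves the bilinear form, $(\gamma,\sigma^{-1}\delta)=(\sigma\gamma,\delta)$, so this equals $\rho_D(S)e_{\sigma\gamma}=\rho_D(S)\sigma e_\gamma$. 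Hence the equivariance holds on generators, and therefore on all of $\text{SL}_2(\mathbb Z)$.

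With equivariance in hand, the transformation law is immediate: since the slash operator $|_kM$ acts only on the scalar components and $\sigma$ only permutes basis vectors, the two commute, so
\[
\sigma(F)|_kM=\sigma\bigl(F|_kM\bigr)=\sigma\bigl(\rho_D(M)F\bigr)=\rho_D(M)\,\sigma(F),
\]
which is exactly the defining relation for membership in $\mathcal A(k,\rho_D)$ (resp. $\mathcal M(k,\rho_D)$). I do not anticipate a genuine obstacle here; the only thing requiring care is bookkeeping the distinction between $e_{\sigma\gamma}$ and $e_{\sigma^{-1}\gamma}$ when rewriting $\sigma(F)$ in the standard basis, and confirming that $\text{Aut}(D)$ is by definition the group of automorphisms of the finite quadratic module $(D,q)$ — i.e. the bijections preserving both $q$ and the associated bilinear form — which is precisely what makes the two generator computations go through.
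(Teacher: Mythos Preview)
Your proposal is correct and follows essentially the same approach as the paper: the paper's proof simply asserts that the action of $\text{SL}_2(\mathbb Z)$ and that of $\text{Aut}(D)$ on $\mathbb C[D]$ commute, and you have supplied the explicit verification of this commutation on the generators $T$ and $S$ together with the routine check that holomorphy and growth conditions are preserved.
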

\begin{proof}
This can be seen easily from the fact that the action of $\text{SL}_2(\mathbb Z)$ and that of $\text{Aut}(D)$ on $\mathbb C[D]$ commute.
\end{proof}

We recall Borcherds's theorem on obstruction of vector-valued modular forms. We denote by $\mathcal P_D$ the space of vector-valued Fourier polynomials $P(q)=\sum_{\gamma\in D}P_\gamma(q) e_\gamma$ where each $P_\gamma(q)=\sum_{n\leq 0}c(n)q^n$ contains only finitely many terms. Let $\mathcal P^{\text{inv}}_D$ be the subspace of functions that are invariant under $\text{Aut}(D)$. In $\mathcal P_D$, we denote by $\mathcal P(k,\rho_D)$ the subspace of elements $P$ such that there exists $F\in\mathcal A(k,\rho_D)$ with $F-P$ holomorphic and vanishing at $q=0$. Similarly we have $\mathcal P^{\text{inv}}(k,\rho_D)$.

Let $P\in\mathcal P_D$ and $G\in \mathcal M(2-k,\rho_D^*)$ where $\rho_D^*$ is the dual of $\rho_D$. Assuming $P=\sum P_\gamma e_\gamma$ and $G=\sum G_\gamma e_\gamma$, we have the pairing
\[\langle P,G\rangle= \text{ the constant term of the Fourier expansion in } q \text{ of } \sum_{\gamma}P_\gamma G_\gamma.\]

\begin{Thm}[Borcherds]\label{Obstruction-Vector}
Let $P\in\mathcal P_D$. We have $P\in\mathcal P(k,\rho_D)$ if and only if  $\langle P, G\rangle=0$ for each $G\in\mathcal M(2-k,\rho_D^*)$.
\end{Thm}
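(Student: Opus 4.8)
The plan is to prove Borcherds's obstruction theorem by a standard duality argument between the space of possible principal parts and the annihilator of holomorphic cusp forms of the dual weight, via Serre duality or, more elementarily, via the residue theorem on the modular curve. First I would establish the easy direction: if $P\in\mathcal P(k,\rho_D)$, so $P=F-($holomorphic vanishing at $\infty)$ for some $F\in\mathcal A(k,\rho_D)$, and if $G\in\mathcal M(2-k,\rho_D^*)$, then $\sum_\gamma F_\gamma G_\gamma$ is a scalar-valued weakly holomorphic modular form of weight $2$ on $\mathrm{SL}_2(\mathbb Z)$ (the pairing of $\rho_D$ with $\rho_D^*$ being trivial); hence $\left(\sum_\gamma F_\gamma G_\gamma\right)\frac{d\tau}{2\pi i}$ is a meromorphic differential on $X(1)=\mathrm{SL}_2(\mathbb Z)\backslash\mathbb H^*$ holomorphic away from $\infty$, so the sum of its residues is zero. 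The only pole is at $\infty$, and its residue is exactly the constant term of the $q$-expansion of $\sum_\gamma F_\gamma G_\gamma$; since $G$ is holomorphic and vanishing-compatible, only the principal part $P$ of $F$ contributes, giving $\langle P,G\rangle=0$.

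For the converse I would argue by linear algebra on finite-dimensional spaces. Fix a bound $n_0$ so that $P$ is supported on exponents $\geq -n_0$, and let $\mathcal P_D^{\leq n_0}$ be the (finite-dimensional) space of such Fourier polynomials. Consider the linear map $R:\mathcal A^{\leq n_0}(k,\rho_D)\to\mathcal P_D^{\leq n_0}$ sending a weakly holomorphic form with poles bounded by $n_0$ to its principal part; its image is $\mathcal P(k,\rho_D)\cap\mathcal P_D^{\leq n_0}$ and its kernel is $\mathcal M(k,\rho_D)$. Dually, there is the pairing map $\mathcal M(2-k,\rho_D^*)\to(\mathcal P_D^{\leq n_0})^\vee$, $G\mapsto\langle\cdot,G\rangle$. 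The content of the theorem is that the image of $R$ equals the annihilator of the image of this second map. Since one inclusion ("image of $R$ $\subseteq$ annihilator") is the easy direction just proved, it suffices to compare dimensions: $\dim\big(\mathrm{image}\,R\big)=\dim\mathcal A^{\leq n_0}(k,\rho_D)-\dim\mathcal M(k,\rho_D)$, while $\dim\big(\mathrm{annihilator}\big)=\dim\mathcal P_D^{\leq n_0}-\dim\mathcal M(2-k,\rho_D^*)$ (using that the pairing is nondegenerate on $\mathcal M(2-k,\rho_D^*)$, which itself needs the easy direction applied with $F$ a cusp form to see no nonzero $G$ pairs to zero with all of $\mathcal P_D^{\leq n_0}$ — a $G$ in the radical would have all Fourier coefficients at $\infty$ vanish, forcing $G=0$).

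Thus everything reduces to the dimension identity
\[
\dim\mathcal A^{\leq n_0}(k,\rho_D)-\dim\mathcal M(k,\rho_D)=\dim\mathcal P_D^{\leq n_0}-\dim\mathcal M(2-k,\rho_D^*),
\]
which I would derive from the Riemann--Roch theorem (equivalently, Serre duality) on $X(1)$ for the vector bundle attached to $\rho_D$ twisted by the line bundle of weight-$k$ forms and by $\mathcal O(n_0\cdot\infty)$: the cohomology $H^1$ of that twisted bundle is Serre-dual to $\mathcal M(2-k,\rho_D^*)$ (holomorphic sections of the "complementary" bundle, with the $n_0$-twist forcing vanishing at $\infty$ which is automatic once $n_0\geq 0$ for cusp forms — one checks the edge contributions at $\infty$ match the $s(m)$-type multiplicities but here over $X(1)$ they are clean), and $H^0$ is $\mathcal A^{\leq n_0}(k,\rho_D)$, while $\mathcal M(k,\rho_D)$ is the untwisted $H^0$. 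The hard part will be bookkeeping the local contribution at the cusp $\infty$ and at the elliptic points of $X(1)$ when translating between the vector bundle formulation and the explicit Fourier-coefficient description of $\mathcal P_D$, i.e. making sure the Riemann--Roch Euler characteristic really produces the naive count $\dim\mathcal P_D^{\leq n_0}=\sum_{\gamma\in D}\#\{n\in q(\gamma)+\mathbb Z:-n_0\leq n\leq 0\}$; once that is in hand, the stated equivalence is immediate. Alternatively, one can bypass Riemann--Roch entirely and invoke the theorem exactly as proved in Borcherds's original paper, since in this excerpt it is cited as a known result, and simply record the easy direction together with the duality reformulation above.
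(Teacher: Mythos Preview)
The paper does not prove this theorem at all: its entire proof is the sentence ``This is Borcherds' Theorem 3.1 in \cite{borcherds1999gross}.'' So there is nothing to compare at the level of argument; the statement is imported as a black box.

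Your proposal, by contrast, sketches an actual proof. The easy direction via the residue theorem is correct and standard: $\sum_\gamma F_\gamma G_\gamma$ is weakly holomorphic of weight $2$ for $\mathrm{SL}_2(\mathbb Z)$, so its constant $q$-coefficient vanishes, and that constant term is exactly $\langle P,G\rangle$. For the converse, your dimension-count strategy is essentially the one Borcherds himself uses (he phrases it via Serre duality on the modular curve for a suitable vector bundle), so you are not taking a genuinely different route from the original source, only from this paper's one-line citation. The honest caveat you flag---the local bookkeeping at the cusp and elliptic points needed to make the Riemann--Roch count match the naive $\dim\mathcal P_D^{\leq n_0}$---is real and is where the work lies; your sketch is accurate about what remains to be checked but does not carry it out. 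Your final sentence, noting that one may simply cite Borcherds, is in fact exactly what the paper does, so that alternative is the one that matches.
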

\begin{proof}
This is Borcherds' Theorem 3.1 in  \cite{borcherds1999gross}.\end{proof}

\begin{Cor}\label{Obstruction-Invariant}
Let $P\in \mathcal P_D^\text{inv}$. Then $P\in\mathcal P^\text{inv}(k,\rho_D)$ if and only if $\langle P, G\rangle=0$ for each $G\in\mathcal M^\text{inv}(2-k,\rho_D^*)$.
\end{Cor}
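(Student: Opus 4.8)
The plan is to deduce Corollary \ref{Obstruction-Invariant} from Borcherds's Theorem \ref{Obstruction-Vector} by an averaging argument over the finite group $\text{Aut}(D)$. One direction is immediate: if $P\in\mathcal P^\text{inv}(k,\rho_D)$, then a fortiori $P\in\mathcal P(k,\rho_D)$, so by Theorem \ref{Obstruction-Vector} we have $\langle P,G\rangle=0$ for \emph{every} $G\in\mathcal M(2-k,\rho_D^*)$, in particular for every $G\in\mathcal M^\text{inv}(2-k,\rho_D^*)$.

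For the converse, suppose $P\in\mathcal P_D^\text{inv}$ satisfies $\langle P,G\rangle=0$ for all $G\in\mathcal M^\text{inv}(2-k,\rho_D^*)$. I would like to apply Theorem \ref{Obstruction-Vector}, so I need to check that $\langle P,G\rangle=0$ for \emph{all} $G\in\mathcal M(2-k,\rho_D^*)$, not just the invariant ones. Here is where the averaging enters. Given an arbitrary $G\in\mathcal M(2-k,\rho_D^*)$, form the average $\widetilde G=\frac{1}{|\text{Aut}(D)|}\sum_{\sigma\in\text{Aut}(D)}\sigma(G)$. By Lemma \ref{Action} (applied to $\rho_D^*$, which has the same automorphism group acting on $\mathbb C[D]$) each $\sigma(G)$ lies in $\mathcal M(2-k,\rho_D^*)$, and the average is manifestly $\text{Aut}(D)$-invariant, so $\widetilde G\in\mathcal M^\text{inv}(2-k,\rho_D^*)$. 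By hypothesis $\langle P,\widetilde G\rangle=0$. The key computation is then that, because $P$ is invariant, $\langle P,\sigma(G)\rangle=\langle P,G\rangle$ for every $\sigma$: indeed the pairing $\langle\cdot,\cdot\rangle$ is built from $\sum_\gamma P_\gamma G_\gamma$, and replacing $G$ by $\sigma(G)$ permutes the components $G_\gamma\mapsto G_{\sigma^{-1}\gamma}$, which after reindexing and using $P_{\sigma\gamma}=P_\gamma$ leaves the sum unchanged. Hence $\langle P,G\rangle=\frac{1}{|\text{Aut}(D)|}\sum_\sigma\langle P,\sigma(G)\rangle=\langle P,\widetilde G\rangle=0$. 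Therefore $\langle P,G\rangle=0$ for all $G\in\mathcal M(2-k,\rho_D^*)$, so by Theorem \ref{Obstruction-Vector} there exists $F\in\mathcal A(k,\rho_D)$ with $F-P$ holomorphic and vanishing at $q=0$.

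It remains to upgrade this $F$ to an \emph{invariant} form. I would again average: set $\widetilde F=\frac{1}{|\text{Aut}(D)|}\sum_{\sigma\in\text{Aut}(D)}\sigma(F)$, which by Lemma \ref{Action} lies in $\mathcal A(k,\rho_D)$ and is $\text{Aut}(D)$-invariant. Since $P$ is already invariant, $\sigma(F)-P=\sigma(F)-\sigma(P)=\sigma(F-P)$ is holomorphic and vanishes at $q=0$ for each $\sigma$ (the automorphism action only permutes components, preserving holomorphy and the order of vanishing), so the same is true of the average $\widetilde F-P$. Hence $P\in\mathcal P^\text{inv}(k,\rho_D)$, completing the proof.

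The argument is essentially formal once the pieces are in place; the only point requiring genuine care is the invariance of the pairing under the simultaneous averaging, i.e. verifying that the reindexing $\langle P,\sigma(G)\rangle=\langle P,G\rangle$ goes through correctly — one must be careful about whether $\text{Aut}(D)$ acts on $\rho_D^*$ by $\sigma$ or $\sigma^{-1}$ and that the constant term of $\sum_\gamma P_\gamma G_\gamma$ is genuinely permutation-invariant in the index $\gamma$. This is the main (though mild) obstacle; everything else follows from Lemma \ref{Action} and Theorem \ref{Obstruction-Vector}.
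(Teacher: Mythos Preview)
Your proposal is correct and follows essentially the same approach as the paper's own proof: both directions use Theorem~\ref{Obstruction-Vector}, the converse proceeds by averaging an arbitrary $G$ over $\text{Aut}(D)$ and using invariance of $P$ (equivalently, invariance of the pairing) to reduce to the invariant case, and the resulting $F$ is then averaged to produce an invariant modular form with the same principal part. Your write-up is slightly more explicit about the reindexing in the pairing, but the argument is the same.
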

\begin{proof}
The forward direction follows directly from Theorem \ref{Obstruction-Vector}.
Now assume $P\in\mathcal P_D^\text{inv}$ and $\langle P,G\rangle=0$ for each $G\in\mathcal M^\text{inv}(2-k,\rho_D^*)$.

It is easy to see that the pairing $\langle\cdot,\cdot\rangle$ is invariant under $\text{Aut}(D)$. For any $G\in \mathcal M(2-k,\rho_D^*)$ and any $\sigma\in\text{Aut}(D)$, we have $\langle \sigma P, \sigma G\rangle=\langle P, \sigma G\rangle$. So it follows that
\[\langle P, G\rangle=\langle P, G'\rangle,\quad G'=\frac{1}{|\text{Aut}(D)|}\sum_{\sigma\in\text{Aut}(D)}\sigma G.\]
By Lemma \ref{Action}, $G'\in \mathcal M(2-k,\rho_D^*)$, and clearly $G'$ is invariant under $\text{Aut}(D)$. Therefore $\langle P, G\rangle=\langle P, G'\rangle=0$ by the assumption. This implies that $P\in\mathcal P(k,\rho_D)$ by Theorem \ref{Obstruction-Vector}. So there exists $F\in\mathcal A(k,\rho_D)$ such that $F-P$ vanishes at $q=0$. Now consider $F'=\frac{1}{|\text{Aut}(D)|}\sum_{\sigma\in\text{Aut}(D)}\sigma F.$
It is clear that $F'-P$ vanishes at $q=0$ and $F'\in\mathcal A^\text{inv}(k,\rho_D)$, and hence $P\in\mathcal P^\text{inv}(k,\rho_D)$.
\end{proof}

We return to our previous setting now and assume that $k\leq 0$ and hence $2-k\geq 2$.
Let us denote by $E(N,2-k,\chi_D)$ the space of Eisenstein series of level $N$, weight $2-k$ and character $\chi_D$. It is well-known that $\text{dim}(E(N,2-k,\chi_D))=2^{\omega(N)}$, with a basis concretely given by $\{E_m: m\mid N, m=N_m\}$ where (see Theorem 4.5.2 and Theorem 4.6.2 in \cite{diamond2005first})
\[
E_m= \delta_{1,m}L\left(k-1,\chi_D\right)+2\sum_{n=1}^\infty\left(\sum_{d\mid n}\chi_m(n/d)\chi_m'(d)d^{1-k}\right)q^n.
\]
Here $\delta_{1,m}=1$ if $m=1$ and $0$ otherwise. Let $E^{\epsilon^*}=L(k-1,\chi_D)^{-1}\sum_{m}E_m$. It can be seen easily that $E^{\epsilon^*}\in M^{\epsilon^*}(N,2-k,\chi_D)$.

\begin{Lem}\label{Eisenstein}
For each $\delta$, we have $\text{dim}(E^{\delta}(N,2-k,\chi_D))=1$. In particular, $E^{\epsilon^*}(N,2-k,\chi_D)=\text{span}_\mathbb{C}\{E^{\epsilon^*}\}$.
\end{Lem}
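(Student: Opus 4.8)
The plan is to determine how the Hecke operator $U(N_p)$ and the Atkin--Lehner-type operator $\eta_p$ act on the explicit basis $\{E_m : m\mid N,\ m=N_m\}$ of $E(N,2-k,\chi_D)$, and then to read off the $\delta$-eigenspace decomposition. For a prime $p\mid N$, let $m^{(p)}$ denote the divisor of $N$ obtained from $m=N_m$ by toggling its $p$-primary part: $m^{(p)}=m/N_p$ if $p\mid m$, and $m^{(p)}=mN_p$ if $p\nmid m$. I would first recall from the classical theory of Eisenstein series that $E_m=E_{\chi_m,\chi_m'}$ is a simultaneous Hecke eigenform; in particular it is a $U(p)$-eigenform with nonzero eigenvalue ($\chi_m'(p)p^{1-k}$ when $p\mid m$, and $\chi_m(p)$ when $p\mid N/m$), so that $E_m|U(N_p)$ is a nonzero scalar multiple of $E_m$. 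I would then use that $\eta_p$ is, up to a nonzero scalar, the Atkin--Lehner operator on the $p$-part of the level, whose effect on $E_{\chi_m,\chi_m'}$ is to interchange the $p$-components of the two inducing characters, the proportionality constant being a nonzero expression in the Gauss sum $W(\chi_p)$ and a power of $p$; hence $E_m|\eta_p$ is a nonzero scalar multiple of $E_{m^{(p)}}$. Consequently $U(N_p)$ and $\eta_p$, and therefore the operator $A_p:=C_p\,U(N_p)\eta_p$, preserve $E(N,2-k,\chi_D)$, and $A_pE_m=c_{p,m}\,E_{m^{(p)}}$ with $c_{p,m}\neq 0$.

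Since $U(N_p)$ and $\eta_p$ preserve $E(N,2-k,\chi_D)$, the projections $\prod_{p\mid N}J(p,\delta_p)$ restrict to it, and Proposition \ref{Decomposition} applied in weight $2-k$ (together with Corollary \ref{Fourier-Epsilon} and Lemma \ref{Projection}) yields $E(N,2-k,\chi_D)=\bigoplus_\delta E^\delta(N,2-k,\chi_D)$, where by Corollary \ref{Fourier-Epsilon} one has $E^\delta(N,2-k,\chi_D)=\{f\in E(N,2-k,\chi_D):A_pf=\delta_pf\text{ for all }p\mid N\}$. As $\dim E(N,2-k,\chi_D)=2^{\omega(N)}$ and there are $2^{\omega(N)}$ sign vectors $\delta$, it suffices to prove that each $E^\delta(N,2-k,\chi_D)$ is nonzero. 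Fix $\delta$ and put $g_\delta=E_N\bigl|\prod_{p\mid N}J(p,\delta_p)$, which lies in $E^\delta(N,2-k,\chi_D)$. Expanding $\prod_{p\mid N}J(p,\delta_p)=\prod_{p\mid N}\tfrac12(\mathrm{id}+\delta_pA_p)$ as a sum over subsets $S$ of the primes dividing $N$ and applying each monomial $\prod_{p\in S}A_p$ to $E_N$ --- which, by the first paragraph, is a nonzero multiple of the $E_m$ with $m$ obtained from $N$ by toggling exactly the bits in $S$, and these divisors $m$ are pairwise distinct --- one finds $g_\delta=2^{-\omega(N)}\sum_{m\mid N,\,m=N_m}\lambda_m E_m$ with every $\lambda_m\neq 0$. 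Since the $E_m$ are linearly independent, $g_\delta\neq 0$, so $\dim E^\delta(N,2-k,\chi_D)=1$ for every $\delta$. In particular, $E^{\epsilon^*}$ lies in $E(N,2-k,\chi_D)$ and, as already noted, satisfies the $\epsilon^*$-condition, so it is a nonzero element of $E^{\epsilon^*}(N,2-k,\chi_D)$ and hence spans it.

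The main obstacle is the first paragraph: pinning down precisely the action of $U(N_p)$ and of $\eta_p$ on the Eisenstein basis. That $E_m$ is a $U(p)$-eigenform with nonzero eigenvalue, and that $\eta_p$ permutes the $E_m$ by toggling $p$ with nonzero constants, is standard Eisenstein-series Hecke theory, but carrying it out requires some bookkeeping with the conductors of $\chi_m$ and $\chi_m'$, with the prime $2$ (where $N_2\in\{4,8\}$ rather than $2$), and with the weight-$2$ edge case $k=0$ (where, since $\chi_D$ is nontrivial, the Eisenstein series are still holomorphic); alternatively, both facts can be verified directly from the $q$-expansion of $E_m$ and the definitions of $U(N_p)$ and $\eta_p$, along the lines of the computation in the proof of Lemma \ref{Decomposition-Lem}.
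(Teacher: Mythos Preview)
Your argument is correct and shares the paper's overall architecture: use the direct-sum decomposition into $\delta$-pieces, then show each piece is nonzero by projecting a single basis Eisenstein series and checking the result does not vanish. The difference lies in how nonvanishing is verified. The paper starts from $E_1$, applies $\prod_{p}J(p,\delta_p)$, and reads off via equation~(\ref{J-operator}) that for $(n,N)=1$ the $n$-th coefficient of the projection equals $2^{-\omega(N)}a(n)\prod_{p\mid N}(1+\delta_p\chi_p(n))$; choosing a prime $q$ with $\chi_p(q)=\delta_p$ for every $p$ gives a nonzero coefficient. This bypasses entirely the question of how $U(N_p)$ and $\eta_p$ act on the Eisenstein basis. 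Your route instead determines that action explicitly, so that the projection of $E_N$ becomes a linear combination of \emph{all} the $E_m$ with nonzero coefficients, hence is nonzero by linear independence. Your approach is more structural and yields more (it exhibits the generator of $E^\delta$ as an explicit combination of the $E_m$), but it front-loads the work into the Atkin--Lehner/Hecke computation you flag as the main obstacle; the paper's Fourier-coefficient check is quicker and avoids that bookkeeping, at the cost of giving less information about the generator.
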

\begin{proof}
We will prove that $\text{dim}(E^{\delta}(N,2-k,\chi_D))\geq 1$, from which the lemma follows.

We consider $f=E_1$ and maintain the notations in the proof of Proposition \ref{Decomposition}. We have $g\in M^\delta(N,2-k,\chi_D)$ and the coefficients of $g$ when $(n,N)=1$ are given by
\[b(n)=2^{-\omega(N)}a(n)\prod_{p\mid N}(1+\delta_p\chi_p(n)).\] So it suffices to show that $g\neq 0$. To this end, let us fix a set of integers $\{c_p: p\mid N\}$ such that $\chi_p(c_p)=\delta_p$. We then may choose a prime $q$ such that $q\equiv c_p\imod N_p$ for each $p\mid N$. Therefore, $\chi_p(q)=\delta_p$. On the other hand, $a(q)=2(\chi_D(q)q^{1-k}+1)\neq 0$. Therefore, $b(q)\neq 0$ and we are done.
\end{proof}

Now assume $E^{\epsilon^*}=\sum_{n\geq 0} B(n)q^n$. Here $B(0)=1$. We have the obstruction theorem for scalar-valued modular forms. (See Theorem 6 in \cite{bruinier2003borcherds} in the case of prime level.)

\begin{Thm}\label{Obstruction-Scalar}
Let $P=\sum_{n<0}a(n)q^n$ be a polynomial in $q^{-1}$ such that its coefficients satisfy the $\epsilon$-condition. Then there exists $f\in A^\epsilon(N,k,\chi_D)$ with $f=\sum_{n\in\mathbb Z}a(n)q^n$, if and only if
\[\sum_{n<0}s(n)a(n)b(-n)=0,\]
for each $g=\sum_{n\geq 0}b(n)q^n\in S^{\epsilon^*}(N,2-k,\chi_D)$. If $N_1\equiv 1,3\imod 4$ and $f$ exists, then $f$ is unique and its constant term is given by
\[a(0)=-\frac{1}{s(0)}\sum_{n< 0}s(n)a(n)B(-n).\]
\end{Thm}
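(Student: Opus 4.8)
\noindent
The plan is to transport the prescribed principal part to the vector-valued side, apply Borcherds's obstruction theorem there in its invariant form (Corollary~\ref{Obstruction-Invariant}), and then read everything back through the isomorphism $\phi,\psi$ of Theorem~\ref{Correspondence}.

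First I would lift $P=\sum_{n<0}a(n)q^n$ to a vector-valued Fourier polynomial. Guided by the explicit formula in Theorem~\ref{Correspondence}, for a parameter $t\in\mathbb C$ I set $\tilde P^{(t)}=\sum_{\gamma\in D}\tilde P^{(t)}_\gamma e_\gamma$ with $\tilde P^{(t)}_\gamma=\sum_{n<0,\,n\equiv Nq(\gamma)\imod N}s(n)a(n)q^{n/N}$, and with an extra constant term $t$ added in the component $\gamma=0$. By Proposition~\ref{Invariance} the coefficients depend only on $q(\gamma)$, so $\tilde P^{(t)}\in\mathcal P^{\text{inv}}_D$; moreover, running the argument that $\phi(\mathcal A^{\text{inv}}(k,\rho_D))\subseteq A^\epsilon(N,k,\chi_D)$ in reverse shows that the $\epsilon$-condition is exactly the assertion that $a(n)\neq0$ forces $n/N$ to be a value of $q$ on $D$, so that no coefficient of $P$ is dropped. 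Using $F=\psi(f)$ in one direction and $f=\phi(F)=2^{-\omega(N)}\sum_\gamma F_\gamma(N\tau)$ together with Theorem~\ref{Correspondence} in the other, plus the count $\#\{\gamma\in D:q(\gamma)=n/N\}=2^{\omega(N)}/s(n)$ from that proof, one checks by matching Fourier coefficients the equivalence: \emph{there is $f\in A^\epsilon(N,k,\chi_D)$ with principal part $P$ if and only if $\tilde P^{(t)}\in\mathcal P^{\text{inv}}(k,\rho_D)$ for some $t$}, and that for such an $f$ one has $a(0)=2^{-\omega(N)}t$. This reduces the theorem to deciding when $\tilde P^{(t)}\in\mathcal P^{\text{inv}}(k,\rho_D)$.

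By Corollary~\ref{Obstruction-Invariant} this holds iff $\langle\tilde P^{(t)},G\rangle=0$ for every $G\in\mathcal M^{\text{inv}}(2-k,\rho_D^*)$, so the key computation is to express this pairing in scalar terms. Writing $G=\psi(g)$ with $g=\sum_{m\ge0}b(m)q^m\in M^{\epsilon^*}(N,2-k,\chi_D)$ (Theorem~\ref{Correspondence} for $\rho_D^*$ and weight $2-k\ge2$), the component $G_\gamma$ is supported on $-q(\gamma)+\mathbb Z$ with coefficients $s(n)b(n)$, so $\tilde P^{(t)}_\gamma G_\gamma$ has integral exponents and its constant term pairs $q^{n/N}$ in $\tilde P^{(t)}_\gamma$ with $q^{-n/N}$ in $G_\gamma$; summing over $\gamma$ and balancing the two factors $s(n)$ against the multiplicity $2^{\omega(N)}/s(n)$ gives $\langle\tilde P^{(t)},G\rangle=2^{\omega(N)}\bigl(t\,b(0)+\sum_{n<0}s(n)a(n)b(-n)\bigr)$. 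The book-keeping here — the two factors $s(n)$, the multiplicity, and the sign conventions distinguishing $\rho_D$ from $\rho_D^*$, which are reconciled using that $\gamma\mapsto-\gamma\in\text{Aut}(D)$ preserves $q$ — is the step I expect to be the main technical obstacle; the rest is formal.

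Finally I would split the obstruction. Identifying $\mathcal M^{\text{inv}}(2-k,\rho_D^*)$ with $M^{\epsilon^*}(N,2-k,\chi_D)=S^{\epsilon^*}(N,2-k,\chi_D)\oplus\mathbb C\,E^{\epsilon^*}$ (Lemma~\ref{Eisenstein}), the displayed pairing vanishes on all of $S^{\epsilon^*}$ (where $b(0)=0$) precisely when $\sum_{n<0}s(n)a(n)b(-n)=0$, which is the asserted obstruction, and it vanishes on $E^{\epsilon^*}=\sum_{n\ge0}B(n)q^n$ (with $B(0)=1$) precisely when $t=-\sum_{n<0}s(n)a(n)B(-n)$. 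Hence a valid $t$ exists iff the obstruction vanishes, and it is then unique; since $s(0)=2^{\omega(N)}$ this gives $a(0)=2^{-\omega(N)}t=-s(0)^{-1}\sum_{n<0}s(n)a(n)B(-n)$. For the uniqueness of $f$ when $N_1\equiv1,3\imod4$: if $f_1,f_2\in A^\epsilon(N,k,\chi_D)$ share the principal part $P$, then $f_1-f_2$ is holomorphic at $\infty$, hence lies in $M^\epsilon(N,k,\chi_D)$ by Corollary~\ref{Holomorphy}, which is zero because $k\le0$ and $\chi_D$ is a nontrivial character; thus $f_1=f_2$. This is also the place where the hypothesis $N_1\equiv1,3\imod4$ enters, since Corollary~\ref{Holomorphy} is what secures the identification $\phi(\mathcal M^{\text{inv}}(2-k,\rho_D^*))=M^{\epsilon^*}(N,2-k,\chi_D)$ used above, the reverse inclusion $\psi(M^{\epsilon^*}(N,2-k,\chi_D))\subseteq\mathcal M^{\text{inv}}(2-k,\rho_D^*)$ being immediate from the $q$-expansions and requiring no restriction on $N_1$.
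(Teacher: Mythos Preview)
Your proposal is correct and follows essentially the same route as the paper: transport the scalar principal part to the vector-valued side via Theorem~\ref{Correspondence}, invoke Corollary~\ref{Obstruction-Invariant}, compute the pairing $\langle\tilde P,\psi(g)\rangle$ by balancing the factors $s(n)$ against the multiplicity $2^{\omega(N)}/s(n)$, split the obstruction space as $S^{\epsilon^*}\oplus\mathbb C\,E^{\epsilon^*}$ via Lemma~\ref{Eisenstein}, and deduce uniqueness from Corollary~\ref{Holomorphy}. Your device of carrying the constant term as a free parameter $t$ makes the separation into the cuspidal obstruction and the Eisenstein determination of $a(0)$ somewhat cleaner than the paper's more compressed treatment, but the substance is identical.

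One small correction to your closing remark: the identification $\phi(\mathcal M^{\text{inv}}(2-k,\rho_D^*))=M^{\epsilon^*}(N,2-k,\chi_D)$ does \emph{not} actually require Corollary~\ref{Holomorphy}. If $F\in\mathcal M^{\text{inv}}$ then for any $M\in\text{SL}_2(\mathbb Z)$ the function $F_0|M$ is a finite linear combination of the components $F_\gamma$, each of which has a holomorphic $q$-expansion, so $\phi(F)$ is automatically holomorphic at every cusp; together with the inclusion $\psi(M^{\epsilon^*})\subseteq\mathcal M^{\text{inv}}$ you note, this gives the identification for all $N_1$. The hypothesis $N_1\equiv 1,3\imod 4$ in the theorem is needed only for the uniqueness clause, exactly as in the paper's proof, because Corollary~\ref{Holomorphy} is what lets you pass from ``holomorphic at $\infty$'' to ``holomorphic everywhere'' on the scalar side.
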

\begin{proof}
By Theorem \ref{Correspondence}, we have
\[\psi(f)_\gamma=s(Nq(\gamma))\sum_{n\equiv Nq(\gamma)\imod N\mathbb Z} a(n)e\left(\frac{n\tau}{N}\right),\]
\[\psi(g)_\gamma=s(Nq(\gamma))\sum_{n\equiv -Nq(\gamma)\imod N\mathbb Z} b(n)e\left(\frac{n\tau}{N}\right).\]
Now $\rho_D^*$ is equal to $\rho_{D[-1]}$ where $D[-1]$ is the module $D$ with discriminant form $-q(\cdot)$. Then by Corollary \ref{Obstruction-Invariant}, we have that $f$ exists if and only if $\langle P, \psi(g)\rangle=0$ for each $g$. Concretely, it means the constant term of
\[\sum_\gamma s(Nq(\gamma))^2\left(\sum_{n\equiv Nq(\gamma)\imod N\mathbb Z} a(n)e\left(\frac{n\tau}{N}\right)\right)\left(\sum_{n\equiv -Nq(\gamma)\imod N\mathbb Z} a(n)e\left(\frac{n\tau}{N}\right)\right)\]
vanishes. The constant term is given by
\[\sum_\gamma s(Nq(\gamma))^2\sum_{n\in\mathbb Z}a(N(-n+q(\gamma)))b(N(n-q(\gamma))),\] which in turn simplifies to
\[\sum_{n\in\mathbb Z}2^{\omega(N)}s(n)a(-n)b(n),\]
from which the first part follows.

By Lemma \ref{Eisenstein} and the same computation, we have the constant term expression assuming the existence.
The uniqueness follows from Corollary \ref{Holomorphy} and the assumption $k\leq 0$.
\end{proof}

Rationality of the Fourier coefficients is important in Borcherds's theory of automorphic products. We end this section with the rationality results, following the lines in \cite{bruinier2003borcherds}. For $f=\sum_n a(n)q^n$ and $\sigma\in\text{Gal}(\mathbb C/\mathbb Q)$, define $f^\sigma=\sum_n a(n)^\sigma q^n$. Let $k$ be an even integer.

\begin{Lem}\label{Galois}
If $f\in A(N,k,\chi_D)$, so is $f^\sigma$.
\end{Lem}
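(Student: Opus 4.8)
The plan is to reduce the statement to holomorphic modular forms and then use the existence of a basis with rational Fourier coefficients, following the lines of \cite{bruinier2003borcherds}.

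First I would reduce to the holomorphic case. Let $\Delta(\tau)=q\prod_{n\ge 1}(1-q^n)^{24}\in S(1,12,\mathbf 1)$ be the discriminant form: it is nowhere vanishing on the upper half plane, has integral Fourier coefficients, and, regarded as a modular form of level $N$, it vanishes at every cusp of $\Gamma_0(N)$ with strictly positive order. Given $f\in A(N,k,\chi_D)$, which has poles of bounded order at its finitely many cusps, choose $m$ large enough that $\Delta^m f$ is holomorphic at all cusps; then $\Delta^m f\in M(N,k+12m,\chi_D)$, since multiplication by $\Delta^m$ introduces no pole on the upper half plane and does not change the character, as $\Delta$ has trivial character on $\mathrm{SL}_2(\mathbb Z)$. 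Because $\Delta^m$ has rational Fourier coefficients and $\sigma$ fixes $\mathbb Q$, we have, as formal $q$-series at $\infty$, $(\Delta^m f)^\sigma=\Delta^m f^\sigma$. Hence, once we know that $\Delta^m f^\sigma$ is a holomorphic modular form of the same type as $\Delta^m f$, the quotient $f^\sigma=(\Delta^m f^\sigma)/\Delta^m$ is holomorphic on the upper half plane, meromorphic at the cusps (as $\Delta^m$ vanishes only there), and transforms under $\Gamma_0(N)$ with character $\chi_D$ in weight $k$; that is, $f^\sigma\in A(N,k,\chi_D)$, which is what we want.

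Next I would treat the holomorphic case. The space $M(N,k',\chi_D)$ is finite-dimensional, and by the classical rationality results for spaces of modular forms with Dirichlet character (see e.g.\ \cite{diamond2005first} or \cite{miyake2006modular}) it has a basis $f_1,\dots,f_r$ whose Fourier expansions at $\infty$ have coefficients in $\mathbb Q(\chi_D)$; since $\chi_D$ takes values in $\{\pm 1\}$ we have $\mathbb Q(\chi_D)=\mathbb Q$, so the $f_i$ may be chosen with rational Fourier coefficients. Writing a holomorphic form $g=\sum_{i} c_i f_i$ with $c_i\in\mathbb C$ and applying $\sigma$ coefficientwise to the $q$-expansion at $\infty$ gives $g^\sigma=\sum_{i} c_i^\sigma f_i$, again an element of $M(N,k',\chi_D)$; in particular the $q$-series $g^\sigma$ is the genuine Fourier expansion at $\infty$ of a holomorphic modular form of level $N$, weight $k'$ and character $\chi_D$. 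Applying this with $g=\Delta^m f$ and $k'=k+12m$ completes the argument.

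The main obstacle is the bookkeeping in the reduction step: one must know that $\Delta$ has strictly positive order of vanishing at every cusp of $\Gamma_0(N)$, so that a single exponent $m$ cancels the pole of $f$ at all cusps simultaneously; this follows from the valence formula applied to the level-one cusp form $\Delta$. The rest — that the nebentypus is unchanged under multiplication by $\Delta^m$, and the rationality input for holomorphic forms — is immediate or classical, and requires no new computation.
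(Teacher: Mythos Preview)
Your proof is correct and follows essentially the same route as the paper: multiply by a sufficiently high power of $\Delta$ to land in a space of holomorphic forms, invoke the existence of a $\mathbb Q$-rational basis for $M(N,k',\chi_D)$ (using that $\chi_D$ is real), and divide back out. The paper's proof is just a terser version of yours, citing \cite{diamond1995modular} for the rational basis and omitting the cusp bookkeeping you spell out.
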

\begin{proof}
It is well-known that $M(N,k,\chi_D)$ has a basis of forms with rational integral Fourier coefficients for any even positive weight $k$. (See Corollary 12.3.8 and Proposition 12.3.11 in \cite{diamond1995modular}.)

Take a large positive integer $k'$ and we know $f\Delta^{k'}\in M(N,k+12k',\chi_D)$. The above observation shows that $(f\Delta^{k'})^\sigma\in M(N,k+12k',\chi_D)$. But $\Delta$ has rational integral Fourier coefficients, hence $f^\sigma \Delta^{k'}=(f\Delta^{k'})^\sigma \in M(N,k+12k',\chi_D)$ and $f^\sigma \in A(N,k,\chi_D)$.
\end{proof}

\begin{Prop}
Let $k\leq 0$ be an even integer and assume $N_1\equiv 1,3\imod 4$. Let $f=\sum_na(n)q^n\in A^\delta(N,k,\chi_D)$ and suppose that $a(n)\in\mathbb Q$ for $n<0$. Then all coefficients $a(n)$ are rational with bounded denominator.
\end{Prop}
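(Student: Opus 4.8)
The plan is to reduce the statement to a finite-dimensionality argument, exactly in the spirit of Bruinier--Bundschuh. First I would observe that $f$ is determined by its principal part: by Corollary \ref{Holomorphy}, the space $A^\delta(N,k,\chi_D)$ with $k\le 0$ injects into the space of polynomials in $q^{-1}$ via $f\mapsto \sum_{n<0}a(n)q^n$ (if two forms had the same principal part, their difference would be holomorphic of nonpositive weight, hence zero). Since we are told $a(n)\in\mathbb Q$ for $n<0$, the principal part $P=\sum_{n<0}a(n)q^n$ already has rational coefficients. So the task is to show that the \emph{unique} $f\in A^\delta(N,k,\chi_D)$ with this rational principal part has \emph{all} its Fourier coefficients rational with bounded denominator.

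The key step is a Galois-descent argument using Lemma \ref{Galois}. For $\sigma\in\mathrm{Gal}(\mathbb C/\mathbb Q)$, consider $f^\sigma=\sum_n a(n)^\sigma q^n$. By Lemma \ref{Galois}, $f^\sigma\in A(N,k,\chi_D)$; and since applying $\sigma$ to the Fourier coefficients commutes with the $U(N_p)$ and $\eta_p$ operators up to the obvious action on the constants $C_p$ (here one must check that $\sigma$ fixes the relevant Gauss-sum ratio, or rather that the characterization of $A^\delta$ via the vanishing condition $a(n)=0$ when $\chi_p(n)=-\delta_p$ is manifestly Galois-stable --- and it is, because $\chi_p(n)\in\{\pm1,0\}$ and $\delta_p\in\{\pm1\}$ are rational), we get $f^\sigma\in A^\delta(N,k,\chi_D)$ as well. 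Now $f^\sigma$ and $f$ have the same principal part, because $a(n)^\sigma=a(n)$ for $n<0$ by hypothesis. By the uniqueness from the injectivity above, $f^\sigma=f$, so every $a(n)$ is fixed by every $\sigma\in\mathrm{Gal}(\mathbb C/\mathbb Q)$, hence $a(n)\in\mathbb Q$ for all $n$.

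For the bounded-denominator claim I would argue as in the proof of Lemma \ref{Galois}: pick a large even $k'$ with $k+12k'>0$, so $f\Delta^{k'}\in M(N,k+12k',\chi_D)$, a space which by Corollary 12.3.8 and Proposition 12.3.11 of \cite{diamond1995modular} admits a $\mathbb Z$-basis; expressing $f\Delta^{k'}$ in this basis and comparing finitely many coefficients (using that the principal part of $f$, hence finitely many low-order coefficients of $f\Delta^{k'}$, determine the expansion) shows the coefficients of $f\Delta^{k'}$ lie in $\frac1{D_0}\mathbb Z$ for a single integer $D_0$ depending only on the basis and on the (finitely many, integral-denominator-after-clearing) coefficients of the principal part; since $\Delta^{-k'}$ has integral $q$-expansion starting with $q^{-k'}$, multiplying back shows $f$ has coefficients in $\frac1{D_0}\mathbb Z$ as well. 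The one point that needs a little care --- and the place I expect the only genuine friction --- is verifying cleanly that $f^\sigma$ really lands back in $A^\delta$ rather than merely in $A(N,k,\chi_D)$; this is why I would phrase the membership in $A^\delta$ through the coefficient-vanishing condition, which is patently preserved by $\sigma$, rather than through the operator identity of Corollary \ref{Fourier-Epsilon}, whose constants $C_p$ involve Gauss sums and would require tracking their Galois action.
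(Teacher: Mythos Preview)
Your proof is correct and follows essentially the same approach as the paper's own proof: use Lemma \ref{Galois} and the Galois-stability of the coefficient-vanishing definition of $A^\delta$ to see that $f-f^\sigma\in A^\delta(N,k,\chi_D)$ is holomorphic at $\infty$, hence (by Corollary \ref{Holomorphy} and $k\le 0$) zero; then deduce bounded denominators from $f\Delta^{k'}\in M(N,k+12k',\chi_D)$. Your remark that one should verify $f^\sigma\in A^\delta$ via the vanishing condition rather than via the operator identity is exactly the observation the paper makes in one line.
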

\begin{proof}
Let $\sigma\in\text{Gal}(\mathbb C/\mathbb Q)$. We note that $f^\sigma\in A^\delta(N,k,\chi_D)$. Indeed, from Lemma \ref{Galois}, we see that $f^\sigma\in A(N,k,\chi_D)$; moreover, the Galois action preserves the $\delta$-condition.

Now consider $h=f-f^\sigma\in A^\delta(N,k,\chi_D)$. It is obvious that $h$ is holomorphic at $\infty$, hence $h\in M(N,k,\chi_D)$ by Corollary \ref{Holomorphy}. But $k\leq 0$, so $M(N,k,\chi_D)=0$. It follows that $f$ has rational coefficients. Since $f\Delta^{k'}\in M(N,k+12k',\chi_D)$ for large $k'$, it has coefficients with bounded denominator, hence so does $f$.
\end{proof}

\section{An Example}
\noindent
As an application, we consider the case $N_1=3$.

We know that $S(12,2,\chi_D)=0$ (see, for example, Chapter 6 of Stein's book \cite{stein2007modular}). Therefore, by Theorem \ref{Obstruction-Scalar}, for any polynomial $P$ in $q^{-1}$ without constant term that satisfies the $\epsilon$-condition, there exists $f\in A^\epsilon(N,0,\chi_D)$ such that $f-P$ is holomorphic at $q=0$. In this case, we say $P$ is the \emph{principal part} of $f$.

Explicitly, we have in this case $\epsilon^*_2=\epsilon^*_3=1$ and $\epsilon_2=\epsilon_3=-1$. Let $m$ be a positive integer such that $\chi_2(-m)\neq 1$ and $\chi_3(-m)\neq 1$; it means $m\not\equiv -1\imod 4$ and $m\not\equiv -1\imod 3$. For each such $m$, $2^{-\omega((m,12))}q^{-m}$ satisfies the $\epsilon$-condition, hence there exists uniquely
\[f_m=2^{-\omega((m,12))}q^{-m}+\sum_{n\geq 0}a(n)q^n\in A^\epsilon(12,0,\chi_D).\]
In particular, $f_1$ exists. We construct it as follows.

We end this section with the construction of $f_1$. Let $E_2=1-24\sum_{n=1}^\infty\sigma_1(n)q^n$ with $\sigma_1(n)=\sum_{0<d\mid n}d$. Now let \[\frak E_2(\tau)=\frac{1}{24}\left(E_2(\tau)-9E_2(3\tau)-4E_2(4\tau)+36E_2(12\tau)\right).\] It is clear that $\frak E_2\in M(12,2,1)$, even though $E_2$ itself is not a modular form. Now consider the $\eta$-quotient
\[H_2(\tau)=\eta(\tau)^2\eta(3\tau)^{-2}\eta(4\tau)\eta(6\tau)^2\eta(12\tau).\]
By Table I in \cite{martin1996multiplicative}, $H_2\in M(12,2,\chi_D)$ and $H_2$ is vanishing at $\infty$.

\begin{Prop} We have
$f_1=\frak E_2/H_2$.
\end{Prop}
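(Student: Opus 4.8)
The plan is to show that $\mathfrak E_2/H_2$ has exactly the properties that characterize $f_1$ --- that it lies in $A^\epsilon(12,0,\chi_D)$ and has principal part $q^{-1}$ at $\infty$ --- and then to identify it with $f_1$ by the uniqueness in Theorem~\ref{Obstruction-Scalar}. Several verifications are routine. One checks readily that $\mathfrak E_2\in M(12,2,1)$, since it is a linear combination of the weight-$2$ forms $E_2(\tau)-\ell E_2(\ell\tau)$, $\ell\mid 12$, and that its constant term at $\infty$ is $\tfrac1{24}(1-9-4+36)=1$. By Martin's tables \cite{martin1996multiplicative}, $H_2\in M(12,2,\chi_D)$, and the order-at-cusps formula for $\eta$-quotients gives $\operatorname{ord}_\infty H_2=\tfrac1{24}(2-6+4+12+12)=1$ with leading coefficient $1$, so $H_2=q+O(q^2)$. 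As $\eta$ does not vanish on the upper half plane, $\mathfrak E_2/H_2$ is holomorphic there, has weight $0$ and character $\chi_D$ (because $\chi_D^2=1$), and equals $q^{-1}+O(1)$ at $\infty$; hence $\mathfrak E_2/H_2\in A(12,0,\chi_D)$ with principal part $q^{-1}$ at $\infty$.

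The essential point that remains is that $\mathfrak E_2/H_2$ is holomorphic at the five cusps of $\Gamma_0(12)$ other than $\infty$; equivalently, by Corollary~\ref{Holomorphy-Special}, that it satisfies the $\epsilon$-condition. I would prove this by comparing divisors at the cusps, whose representatives may be taken with denominators $1,2,3,4,6,12$. Ligozat's formula yields the cusp divisor of the $\eta$-quotient $H_2$: it vanishes to positive order at the cusps of denominators $1,2,4,6$ and at $\infty$, and is non-vanishing at the cusp of denominator $3$. For $\mathfrak E_2$ one computes the constant term at each cusp by applying the slash operator for the corresponding scaling matrix together with the quasi-modularity $E_2(\gamma\tau)=(c\tau+d)^2E_2(\tau)-\tfrac{6ic(c\tau+d)}{\pi}$; the non-modular contributions cancel, as they must since $\mathfrak E_2$ is a genuine modular form, leaving for instance the constant term $\tfrac1{24}(1-1-\tfrac14+\tfrac14)=0$ at the cusp $0$. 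Carried out at every cusp, this shows $\mathfrak E_2$ vanishes at the cusps of denominators $1,2,4,6$, so $\mathfrak E_2/H_2$ has no pole away from $\infty$.

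To conclude, $f_1$ exists by Theorem~\ref{Obstruction-Scalar} (here $S(12,2,\chi_D)=0$), lies in $A^\epsilon(12,0,\chi_D)$ with principal part $q^{-1}$, and is holomorphic at every cusp other than $\infty$ by Corollary~\ref{Holomorphy-Special}, which applies since $N_1=3\equiv 3\imod 4$. Then $\mathfrak E_2/H_2-f_1$ is a weakly holomorphic modular form of weight $0$ and character $\chi_D$ that is holomorphic away from $\infty$ and whose coefficient of $q^{-1}$ is $1-1=0$, hence holomorphic at $\infty$ as well; thus $\mathfrak E_2/H_2-f_1\in M(12,0,\chi_D)$, and this space is $0$ because $\chi_D$ is non-trivial. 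Therefore $\mathfrak E_2/H_2=f_1$.

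The step I expect to be the main obstacle is the cusp-divisor bookkeeping of the second paragraph: keeping widths and normalizations straight in Ligozat's formula for $H_2$, and correctly following the constant term of $E_2(\tau)-9E_2(3\tau)-4E_2(4\tau)+36E_2(12\tau)$ at each of the six cusps. An alternative that avoids cusp geometry is to verify the $\epsilon$-condition directly via Corollary~\ref{Fourier-Epsilon}, checking $\mathfrak E_2/H_2=\epsilon_pC_p\,(\mathfrak E_2/H_2)|U(N_p)\eta_p$ for $p=2,3$; there $\eta_p$ acts essentially as the Atkin--Lehner involution $W_{N_p}$ on $\Gamma_0(12)$, carrying $H_2$ to another explicit $\eta$-quotient and permuting the $E_2(\ell\tau)$, while the factor $U(N_p)$ still has to be treated as in the proof of Lemma~\ref{Decomposition-Lem}, making that route the more computational of the two.
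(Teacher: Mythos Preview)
Your approach is correct and mirrors the paper's: both compute the cusp orders of $H_2$ (you via Ligozat, the paper via the formula in \cite{martin1996multiplicative}), verify that $\mathfrak E_2$ vanishes at the relevant cusps using the quasi-modularity of $E_2$ (the paper packages this via the real-analytic $\mathbb E_2 = E_2 - 3/(\pi\operatorname{Im}\tau)$), invoke Corollary~\ref{Holomorphy-Special} for $f_1$, and finish with $M(12,0,\chi_D)=0$. The only notable difference is that you plan to check $\mathfrak E_2$ vanishes at all four cusps of denominators $1,2,4,6$, whereas the paper explicitly records only the cusps $0$ and $1/4$; your more thorough bookkeeping is in fact what is needed at the irregular cusps $1/2$ and $1/6$, where $H_2$ has first exponent $1/2$.
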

\begin{proof}
We first investigate the behavior of $\frak E_2/H_2$ at cusps other than $\infty$. By the formula given in \cite{martin1996multiplicative}, we compile the data of $H_2$ in the following table:
\[
\begin{array}{c|cccccc}
\text{cusp } s & \infty & 0 & 1/3 & 1/4 & 1/2 & 1/6\\
\hline
\text{first exponent in } q_s& 1&1 &0& 1&1/2 & 1/2\\
\end{array}
\] where $q_s$ is the uniformizer at the cusp $s$. On the other hand, we may rewrite $\frak E_2$ as
\[24\frak E_2(\tau)=\mathbb E_2(\tau)-9\mathbb E_2(3\tau)-4\mathbb E_2(4\tau)+36\mathbb E_2(12\tau),\] where $\mathbb E_2(\tau)=E_2(\tau)-\frac{3}{\pi\text{Im}(\tau)}$ is a (real analytic) modular form of weight $(2,0)$. From this, we can easily tell that $\frak E_2$ vanishes at $0$ and $1/4$ and hence $\frak E_2/H_2$ is holomorphic at all cusps except $\infty$, while the Fourier expansion of $\frak E_2/H_2$ at $\infty$ begins with $q^{-1}$.

Since $f_1$ contains a single negative power term $q^{-1}$, by Corollary \ref{Holomorphy-Special}, we know that $f_1$ is holomorphic at all cusps other than $\infty$. Since the Fourier expansion of $\frak E_2/H_2$ at $\infty$ begins with $q^{-1}$ as well and it is also holomorphic at all other cusps, $f_1-\frak E_2/H_2\in M(12,0,\chi_D)$, hence $f_1=\frak E_2/H_2$.
\end{proof}

The first few terms of the Fourier expansion of $f_1$ at $\infty$ are
\[f_1=\frac{1}{q}+1+2q^2+q^3-2q^6-2q^8+4q^{12}+4q^{14}-q^{15}-6q^{18}+O(q^{19}).\]

\vskip 0.5 cm

\addcontentsline{toc}{chapter}{Bibliography}
\bibliographystyle{amsplain}
\bibliography{paper}

\end{document}